\documentclass[11pt,a4paper]{amsart}
\usepackage[latin1]{inputenc}
\usepackage{amsmath}
\usepackage{amsfonts}
\usepackage{amssymb}
\usepackage{amsthm}
\usepackage{anysize}
\usepackage{amscd}
\usepackage{hyperref}
\usepackage[square, comma, numbers, sort&compress]{natbib}
\usepackage{indentfirst}
\marginsize{3.0cm}{3.0cm}{3.0cm}{3.0cm}

\newcommand{\form}[1]{{\langle #1 \rangle }}
\newcommand{\pfister}[1]{{\langle \! \langle #1 \rangle \! \rangle}}

\newtheorem{theorem}{Theorem}[section]
\newtheorem{lemma}[theorem]{Lemma}
\newtheorem{proposition}[theorem]{Proposition}
\newtheorem{corollary}[theorem]{Corollary}
\newtheorem{conjecture}[theorem]{Conjecture}

\theoremstyle{definition}
\newtheorem{definition}[theorem]{Definition}
\newtheorem{example}[theorem]{Example}

\newtheorem{questions}[theorem]{Questions}

\theoremstyle{remark}
\newtheorem{remark}[theorem]{Remark}
\newtheorem{remarks}[theorem]{Remarks}

\numberwithin{equation}{section}
\setcounter{section}{0}

\begin{document}

\title{On the splitting of quasilinear $p$-forms}
\author{Stephen Scully}
\address{School of Mathematical Sciences, University of Nottingham, University Park, Nottingham, NG7 2RD, United Kingdom}
\email{pmxss4@nottingham.ac.uk}

\subjclass[2010]{11E04, 11E76, 14E05, 15A03.}
\keywords{Quasilinear $p$-forms, Splitting patterns.}

\begin{abstract} We study the splitting behaviour of quasilinear $p$-forms in the spirit of the theory of nondegenerate quadratic forms over fields of characteristic different from 2 using an analogue of M. Knebusch's generic splitting tower. Several new applications to the theory of quasilinear quadratic forms are given. Among them, we can mention an algebraic analogue of A. Vishik's theorem on ``outer excellent connections'' in the motives of quadrics, partial results towards a quasilinear analogue of N. Karpenko's theorem on the possible values of the invariant $i_1$, and a proof of a conjecture of D. Hoffmann on quadratic forms with maximal splitting in the quasilinear case. \end{abstract}

\maketitle

\section{Introduction}

Let $k$ be a field of characteristic different from 2. If $q$ is a nondegenerate quadratic form over $k$, its \emph{splitting pattern} may be defined as the increasing sequence $j_0 < j_1 < ... < j_h$ of Witt indices realised by $q$ over all possible field extensions of $k$. This invariant gives a useful means by which to pre-classify quadratic forms according to what one may term their ``algebraic complexity''. A systematic approach to its study was initiated in the 1970's by M. Knebusch (cf. \cite{Knebusch}), who introduced the \emph{generic splitting tower} of a quadratic form $q$, an explicit tower of fields $k_0 \subset k_1 \subset ... \subset k_h$ which splits $q$ in a universal way. From this construction, one naturally extracts the \emph{higher Witt indices} $i_r(q)$ of $q$, and the splitting pattern of $q$ is recovered via the formulae
\begin{equation} \label{eqknebusch} j_s = i_W(q_{k_s}) = \sum_{r=0}^s i_r(q). \end{equation}
Beginning with these observations, the study of the splitting pattern flourished in the subsequent decades. Later, it was observed that the splitting pattern also carries important information about the geometry of the quadric hypersurfaces and higher orthogonal Grassmannians which are naturally associated to quadratic forms. This led to a rich algebro-geometric approach to the splitting pattern which has proved remarkably successful in recent years.

Two principal directions of research emerge. In the first, the main problem is the determination of all possible splitting patterns of quadratic forms over a general field. This is a problem on which substantial progress has been made in the last two decades. As a highlight of this progress, we can mention the following theorem of N. Karpenko which settled a conjecture of D. Hoffmann.

\begin{theorem}[N. Karpenko, \cite{Karpenko}] \label{Hoffmann'sconjecture} Let $q$ be an anisotropic quadratic form of dimension $>1$ over a field of characteristic different from 2. Then $(i_1(q) - 1)$ is the remainder of $(\mathrm{dim}\;q -1)$ modulo some power of 2. \end{theorem}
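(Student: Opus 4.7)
The plan is to follow the motivic strategy of Karpenko, building on Vishik's structure theory of motives of projective quadrics. Write $D = \mathrm{dim}\;q$ and $i = i_1(q)$; the task is to produce an integer $e \geq 0$ satisfying $2^e \geq i$ and $2^e \mid (D - i)$, for this is exactly what the theorem's divisibility statement asserts. I would work with the Chow motive $M(Q)$ of the projective quadric $Q \subset \mathbb{P}^{D-1}$ defined by $q$, using coefficients in $\mathbb{F}_2$. Since the Witt index of $q_{k(q)}$ is $i$ by definition of $i_1$, the quadric $Q_{k(q)}$ contains a cellular subvariety of dimension $i-1$, yielding a decomposition of $M(Q_{k(q)})$ in which Tate summands $\mathbb{F}_2(j)$ appear for every $j$ in the symmetric range $\{0, 1, \ldots, i-1\} \cup \{D-1-i, \ldots, D-2\}$, alongside the motive of a smaller anisotropic quadric.

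Next I would isolate the upper motive $U(Q) \subset M(Q)$, the indecomposable summand containing the class of a rational point after scalar extension. The Chernousov--Merkurjev Krull--Schmidt theorem for motives of projective quadrics, combined with the Karpenko--Merkurjev theorem on upper motives, makes $U(Q)$ well-defined, and Vishik's motivic symmetry then forces $U(Q)_{k(q)}$ to contain both $\mathbb{F}_2$ and $\mathbb{F}_2(D - 2)$ as Tate summands. Let $I \subseteq \{0, 1, \ldots, D-2\}$ denote the set of shifts at which Tate summands appear in $U(Q)_{k(q)}$. Two independent structural constraints then bear on $I$: Vishik's symmetry forces it to be invariant under $j \mapsto D - 2 - j$, while Steenrod squares on Chow groups modulo $2$ (or, more robustly, the symmetric operations of Vishik--Levine--Morel in algebraic cobordism) respect the idempotent defining $U(Q)$, by Rost nilpotence. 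The classical Wu formula for Steenrod squares on the cohomology of a split quadric then translates this second principle into concrete mod-$2$ divisibility conditions on binomial coefficients relating pairs $a, b \in I$.

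To conclude, I would apply these constraints to the pair of indices $i - 1$ and $D - 1 - i$, both of which belong to $I$ by the first step. Kummer's theorem on $2$-adic valuations of binomial coefficients converts the resulting parity conditions into a statement about the absence of carries in certain base-$2$ additions involving $i - 1$ and $D - 2i$, from which the desired $e$ with $2^e \geq i$ and $2^e \mid (D - i)$ can be extracted by an elementary binary-expansion argument. The principal obstacle is the motivic input underlying the structural constraints on $I$ --- specifically, that symmetric operations preserve indecomposable motivic summands under base change --- which rests on Karpenko's deep technical machinery (symmetric operations in algebraic cobordism, the Rost nilpotence principle, and Krull--Schmidt for motives of projective quadrics). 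By contrast, the final combinatorial step is essentially an exercise in Kummer/Lucas-type arithmetic, and the translation from Witt-index data to motivic data in the first paragraph is a now-standard consequence of the generic splitting machinery of Knebusch recalled in the introduction.
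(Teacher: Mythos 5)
Note first that the paper does not prove this theorem --- it is quoted with attribution to Karpenko~\cite{Karpenko} and serves purely as motivation and as a characteristic-$\neq 2$ benchmark for the quasilinear results of \S 9 --- so there is no internal argument in the paper to compare your proposal against. Taken on its own, your sketch correctly identifies the architecture of Karpenko's original argument: the upper motive $U(Q)$, the symmetry of its set $I$ of Tate shifts under $j \mapsto \mathrm{dim}\;Q - j$, the use of Rost nilpotence to make cohomological operations compatible with the projector defining $U(Q)$, and a closing Lucas--Kummer computation on binary carries. You are also candid that the genuine content, namely that the operations yield usable constraints on $I$, is left as a black box. But that black box is exactly where the difficulty resides: translating ``operations respect $U(Q)$ after base change'' into a concrete mod-$2$ binomial-coefficient constraint on pairs of elements of $I$ requires computing the operations on the Chow groups of the split quadric restricted to the image of the projector, and extracting the integer $e$ with $2^e \geq i_1(q)$ and $2^e \mid (\mathrm{dim}\;q - i_1(q))$ from the constraint applied to the pair $(i-1,\, D-1-i)$ is a nontrivial case analysis of binary digits rather than a one-line Kummer invocation. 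What you have is a correct map of the territory, not a proof --- but since the paper itself does not attempt one and is content to cite Karpenko, that is an acceptable level of detail to offer in this context.
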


For a general field $k$, Karpenko's Theorem \ref{Hoffmann'sconjecture} gives a complete set of restrictions on the possible values of the invariant $i_1$. By the very definition of Knebusch's generic splitting tower, the integer $i_1(q)$ coincides with $i_1(q_{r-1})$ for an appropriate quadratic form $q_{r-1}$ defined over the field $k_{r-1}$. Theorem \ref{Hoffmann'sconjecture} therefore puts substantial restrictions on the possible values of the entire splitting pattern. These restrictions are not exhaustive, however, since there exist nontrivial relations among the higher Witt indices. To illustrate this complexity, we recall the following result of A. Vishik.

\begin{theorem}[A. Vishik, \cite{Vishik2}] \label{outerexcellentconnections} Let $q$ be an anisotropic quadratic form of dimension $>1$ over a field $k$ of characteristic different from 2, and write $\mathrm{dim}\;q = 2^n + m$ for uniquely determined integers $n \geq 0$ and $m \in [1,2^n]$. Let $L$ be a field extension of $k$. If $i_W(q_L) < m$, then $i_W(q_L) \leq m - i_1(q)$. \end{theorem}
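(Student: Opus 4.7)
My plan would be to follow the motivic approach of Vishik \cite{Vishik2}, translating the splitting-pattern statement into a structural result about the Chow motive of the projective quadric $Q = \{q = 0\}$ of dimension $d = \dim q - 2$, working with $\mathbb{F}_2$-coefficients. By Vishik's theory (building on Rost's nilpotence principle), $M(Q)$ has a canonical indecomposable direct summand $U(Q)$, the \emph{upper motive} of $Q$, which contains the Tate motive $\mathbb{F}_2$. Over an algebraic closure, $U(Q)$ decomposes as $\bigoplus_{a \in A} \mathbb{F}_2(a)[2a]$ for a certain ``connection set'' $A \subseteq \{0, 1, \ldots, d\}$, symmetric under $a \mapsto d-a$, which records precisely which Tate motives are glued inside $U(Q)$. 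The Witt index $i_W(q_L)$ over an extension $L/k$ then admits a clean motivic interpretation: it is the largest $i$ such that each of $\mathbb{F}_2(j)[2j]$, $0 \leq j < i$, appears as a direct summand of $M(Q_L)$.

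The technical core of the argument is the following \emph{outer excellent connection}: if $\dim q = 2^n + m$ with $m \in [1,2^n]$, then $A$ contains both the lower block $\{0, 1, \ldots, i_1(q) - 1\}$ and the shifted block $\{m - i_1(q), m - i_1(q) + 1, \ldots, m-1\}$. The first block is essentially the motivic content of Karpenko's Theorem \ref{Hoffmann'sconjecture}. The second block is the new ingredient. To establish it, I would apply Vishik's symmetric operations on the algebraic cobordism ring $\Omega^*$ to natural rational cycles on powers of $Q$ arising from the generic splitting tower of $q$; combined with Rost nilpotence, these operations detect the required connection between Tate motives already in $M(Q)$.

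With the connection in hand, the deduction is a short symmetry argument. Suppose for contradiction that some extension $L/k$ satisfies $m - i_1(q) < i_W(q_L) < m$. Then $\mathbb{F}_2(m - i_1(q))[2(m - i_1(q))]$ splits off as a direct summand of $M(Q_L)$. Since $m - i_1(q)$ lies in $A$, this isolates one Tate constituent of $U(Q)$ in the base-changed decomposition, and a general principle in the framework (the ``binary'' structure of motivic decompositions of quadrics, whereby indecomposable summands are rigidly controlled by the connection set) then forces every Tate motive $\mathbb{F}_2(a)[2a]$ with $a \in A$ to split off of $M(Q_L)$. Applied to $a = m - 1 \in A$, this yields $i_W(q_L) \geq m$, contradicting the assumption.

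The principal obstacle is the outer half of the connection in Step~2. The inner block $\{0, \ldots, i_1(q) - 1\} \subseteq A$ is a formal consequence of known results about $U(Q)$ and the invariant $i_1$. The outer block $\{m - i_1(q), \ldots, m-1\} \subseteq A$, by contrast, encodes a genuinely new interaction between $i_1(q)$ and the dyadic expansion of $\dim q$, and I expect its establishment to require the full strength of Vishik's symmetric operations on $\Omega^*$, together with a careful inductive analysis along the generic splitting tower. This is the step where essentially all the real work must be done; the rest of the proof consists of packaging.
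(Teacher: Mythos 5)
Your overall strategy is the right one — invoke Vishik's outer excellent connection to place $m-i_1(q)$ in the connection set $A$ of the upper motive $U(Q)$, then play the structure of $M(Q_L)$ against the assumed bound on $i_W(q_L)$ — and up through the claim that $\mathbb{F}_2(m-i_1(q))$ becomes a direct summand of $M(Q_L)$ (indeed of $U(Q)_L$, by Rost's result, since $m-i_1(q) < i_W(q_L)$) the argument is sound. The gap is in the final deduction. You assert a ``general principle'' that once one Tate constituent of $U(Q)$ is isolated over $L$, \emph{every} $\mathbb{F}_2(a)$ with $a \in A$ splits off of $M(Q_L)$, and you apply it at $a = m-1$. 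No such principle exists: since $0 \in A$, it would say that whenever $q_L$ is merely isotropic the full top Tate motive $\mathbb{F}_2(m-1)$ already splits off, forcing $i_W(q_L) \geq m$ for every isotropic $q_L$ — absurd, and in fact incompatible with the theorem you are trying to prove (which explicitly allows $0 < i_W(q_L) \leq m-i_1(q)$). Rost nilpotence controls the decomposition of $U(Q)_L$, but it only lets you split off Tate motives $\mathbb{F}_2(a)$ with $a < i_W(q_L)$ or $a > \dim Q - i_W(q_L)$; it does not propagate splitting across all of $A$.

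The missing ingredient, and the step the paper's proof is actually built around, is a second structural fact about $M(Q)$: the twist $U(Q)(i_1(q)-1)$ is \emph{also} a direct summand of $M(Q)$ (Vishik--Karpenko; cited in the paper as Theorem~4.13 of Vishik's ``Fields of $u$-invariant $2^r+1$''). Given that, once $\mathbb{Z}(m-i_1(q))$ is a summand of $U(Q)_L$, the twist immediately exhibits $\mathbb{Z}(m-1)$ as a summand of $M(Q)_L$, and a standard motivic characterisation of the Witt index (Proposition~2.6 there) then yields $i_W(q_L) \geq m$, the desired contradiction. The paper also passes through the generic splitting tower, replacing $L$ by the field $k_s$ realising the largest Witt index $\leq m-1$; this is a convenience rather than essential, and your decision to argue directly over $L$ is fine. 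So: keep Steps 1 and 2, and replace the ``binary structure'' appeal in Step~3 with the twist-by-$i_1(q)-1$ argument.
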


\begin{remark} \label{excellentconnectionsremark} Theorem \ref{outerexcellentconnections} is only one of several nontrivial applications of the main result of \cite{Vishik2} to the study of the splitting pattern. We refer to \citep[\S 2]{Vishik2} for further details (cf. also Remark \ref{concludingremarks} (2) below). \end{remark}

\begin{proof} Since the statement of Theorem \ref{outerexcellentconnections} is not explicitly formulated in \cite{Vishik2}, let us recall how it can be deduced from \citep[Theorem 1.3]{Vishik2}. Let $Q$ be the projective quadric $\lbrace q = 0 \rbrace$, and let $M(Q)$ denote the motive of $Q$ in the category of Chow motives over $k$ with integer coefficients. Let $U$ be the unique direct summand $N$ of $M(Q)$ such that the Tate motive $\mathbb{Z}$ is isomorphic to a direct summand of $N_{\overline{k}}$ (the ``upper motive'' of $Q$). By \citep[Theorem 1.3]{Vishik2} (on ``outer excellent connections''), the shifted Tate motive $\mathbb{Z}(m-i_1(q))$ is also isomorphic to a direct summand of $U_{\overline{k}}$. Let $s$ be the unique nonnegative integer satisfying $j_s \leq m - 1 < j_{s+1}$. Since $i_W(q) < m$, we have $i_W(q) \leq j_s$. Proving the theorem therefore amounts to showing that $j_s \leq m - i_1(q)$. Suppose that this is not the case, and let $k_s$ be the $(s+1)^{st}$ field in the generic splitting tower of $q$. Then, by a result of M. Rost (cf. \citep[Proposition 2.1]{Vishik3}), $\mathbb{Z}(m-i_1(q))$ is isomorphic to a direct summand of $U_{k_s}$. But $U(i_1(q) - 1)$ is also isomorphic to a direct summand of $M(Q)$ by \citep[Theorem 4.13]{Vishik3}, and it therefore follows that the Tate motive $\mathbb{Z}(m-1)$ is isomorphic to a direct summand of $M(Q)_{k_s}$. Finally, \citep[Proposition 2.6]{Vishik3} now implies that $j_s = i_W(q_{k_s}) \geq m$, which contradicts our choice of $s$. \end{proof}

Much less is known in the second main direction of research, which concerns translating the information contained in the splitting pattern into concrete algebraic terms. To give an explicit example, let us recall another conjecture of D. Hoffmann. Let $q$ be an anisotropic quadratic form of dimension $>1$ over $k$, and write $\mathrm{dim}\;q = 2^n + m$ for uniquely determined integers $n \geq 0$ and $m \in [1,2^n]$. By a result of Hoffmann (\citep[Corollary 1]{Hoffmann1}), it is known that $i_1(q) \leq m$ (note that this is actually a special case of the more general Theorem \ref{Hoffmann'sconjecture}). If equality holds, then we say that $q$ has \emph{maximal splitting}. The maximal splitting property is exhibited by a particularly important class of forms, the so-called \emph{Pfister neighbours}. The following conjecture was formulated in \cite{Hoffmann1} (cf. also \cite{IzhboldinVishik}).

\begin{conjecture} \label{maxsplittingconjecture} Let $q$ be an anisotropic quadratic form over a field of characteristic different from 2 such that $2^n + 2^{n-2} < \mathrm{dim}\;q \leq 2^{n+1}$ for some positive integer $n \geq 2$. If $q$ has maximal splitting, then $q$ is a Pfister neighbour. \end{conjecture}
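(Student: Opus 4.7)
My plan is to combine Knebusch's generic splitting tower with Theorems \ref{Hoffmann'sconjecture} and \ref{outerexcellentconnections}, and then to attempt a descent of the first anisotropic kernel of $q$ from the function field of the quadric $Q = \{q = 0\}$ back to the ground field. Write $\dim q = 2^n + m$, so the hypothesis reads $2^{n-2} < m \leq 2^n$ together with $i_1(q) = m$.

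First I would dispose of the extremal case $m = 2^n$: here $q$ has dimension $2^{n+1}$ and becomes hyperbolic over $k(q)$, and a classical theorem of Knebusch asserts that any anisotropic quadratic form of dimension $2^{n+1}$ split by its own function field is similar to an $(n+1)$-fold Pfister form. Assume now $m < 2^n$, set $k_1 = k(q)$, and let $q_1 = (q_{k_1})_{\mathrm{an}}$, so $\dim q_1 = 2^n - m \in (0, 3 \cdot 2^{n-2})$. I would then reduce the problem to descent as follows: if one can produce a form $q'$ over $k$ of dimension $2^n - m$ together with a scalar $\lambda$ such that $(\lambda q')_{k_1} \simeq -q_1$, then the orthogonal sum $\pi = q \perp \lambda q'$ has dimension $2^{n+1}$ and becomes Witt-trivial over $k_1 = k(q)$; its anisotropic part is then, by the theorem invoked in the extremal case, similar to an $(n+1)$-fold Pfister form of which $q$ is a neighbour.

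The heart of the argument is thus to descend $q_1$ from $k_1$ to $k$, and this is the main obstacle. To approach it I would first apply Theorem \ref{outerexcellentconnections} to $q$, which under maximal splitting forces $i_W(q_L) \in \{0\} \cup [m, \infty)$ for every extension $L/k$; this rigidity, combined with Karpenko's Theorem \ref{Hoffmann'sconjecture} applied to the successive anisotropic kernels in the Knebusch tower of $q_1$, should severely constrain the splitting pattern of $q_1$. The dimension bound $\dim q_1 < 3 \cdot 2^{n-2}$ is what allows these constraints to bite, pinching out the splitting patterns that would obstruct descent and pushing $q_1$ into a rigid regime close to that of the anisotropic complement of $q$ inside a putative Pfister form. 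Converting these motivic constraints into an actual descent will require a norm-principle or Rost-correspondence argument on the upper motive of $Q$, exploiting the specific form of the extension $k_1/k$ and the fact that $\pi$, once constructed, would have to be a Pfister form over $k$; producing this descent is precisely where substantial new technical input beyond the machinery already assembled in the excerpt is likely to be needed.
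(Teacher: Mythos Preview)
The statement you are attempting to prove is Conjecture~\ref{maxsplittingconjecture}, and the paper does \emph{not} prove it. Immediately after stating it, the paper says explicitly: ``This conjecture remains wide open. In fact, it is only known in the cases where either $n \leq 4$, or $n \geq 5$ and $\dim q \geq 2^{n+1} - 7$.'' What the paper \emph{does} prove is the quasilinear analogue, Theorem~\ref{quadraticmaxsplitting}, over fields of characteristic~$2$, by methods that have no known counterpart in characteristic~$\neq 2$: the key inputs are the monotonicity result Theorem~\ref{hilltheorem} (that $i_1 \leq i_2 \leq \cdots \leq i_{h_{qp}}$), the dichotomy Theorem~\ref{i1bounds}, and the characterisation of quasi-Pfister neighbours in Theorem~\ref{pfisterneighbourclassification}. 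All of these rest on the comparison machinery of \S7, which in turn exploits the very special structure of Fermat-type forms (Lemma~\ref{pinsepisotropysubform}, Corollary~\ref{isotropyquadratic}, etc.). None of this transfers to nondegenerate quadratic forms in characteristic~$\neq 2$.

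Your proposal is therefore not comparable to any proof in the paper, because there is none. As a strategy it is reasonable in outline --- reduce to descending the first kernel $q_1$ from $k(q)$ to $k$ and then invoke Knebusch's theorem on forms split by their own function field --- but, as you yourself concede in the last sentence, the descent step is precisely the missing substance, and nothing in Theorems~\ref{Hoffmann'sconjecture} or~\ref{outerexcellentconnections} is known to furnish it. The constraints you extract from those theorems do restrict the splitting pattern of $q_1$, but there is no mechanism in your sketch that converts such numerical constraints into rationality of $q_1$ over $k$; the ``norm-principle or Rost-correspondence argument'' you allude to is not specified and is exactly what the experts have been unable to supply. Even granting descent, your claim that the anisotropic part of $\pi = q \perp \lambda q'$ is an $(n{+}1)$-fold Pfister form containing $q$ needs more care: $\pi$ may be isotropic over $k$, and you would need to control $i_W(\pi)$ and argue that $q$ still sits inside $\pi_{\mathrm{an}}$ as a neighbour. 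In short, the proposal is a plausible plan of attack on an open problem, not a proof, and the paper offers no proof to compare it against.
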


This conjecture remains wide open. In fact, it is only known in the cases where either $n \leq 4$, or $n \geq 5$ and $\mathrm{dim}\;q \geq 2^{n+1} - 7$ (cf. \cite{IzhboldinVishik}).\\

Over fields of characteristic 2, several additional complications naturally arise. For example, the development of a complete theory of quadratic forms over such a field requires a systematic treatment of singular forms. Geometrically, this means that one has to deal with non-smooth quadrics, and this further amplifies many of the problems which permeate algebraic geometry in positive characteristic (resolution of singularities, construction of cohomological operations). As a result of these additional complexities, the theory of quadratic forms over fields of characteristic 2 is rather underdeveloped in comparison with its characteristic different from 2 counterpart. In particular, much less is known about the splitting patterns of quadratic forms in this setting, even in the nonsingular case (see \cite{Haution} for example). 

One of the main aims of this article is to present some new results on the splitting behaviour of a special class of quadratic forms over fields of characteristic 2, the so-called \emph{quasilinear quadratic forms}. By definition, a quadratic form over a field $k$ of characteristic 2 is quasilinear if it is of Fermat type (that is, can be written in the form $a_1X_1^2 + .... + a_nX_n^2$ for some $a_i \in k$). The theory of quasilinear quadratic forms may be viewed as a direct analogue of the theory of nondegenerate quadratic forms over fields of characteristic 2, in the sense that both theories represent the ``diagonal part'' of the theory of nondegenerate symmetric bilinear forms in their respective characteristics. In the spirit of the corresponding theory over fields of characteristic different from 2, a detailed study quasilinear quadratic forms  was carried out in a series of papers of D. Hoffmann, A. Laghribi and B. Totaro (see \cite{HoffmannLaghribi1}, \cite{HoffmannLaghribi2} and \cite{Totaro} for example). It was later observed by D. Hoffmann that, in many respects, the theory of quasilinear quadratic forms naturally extends to a theory of \emph{quasilinear $p$-forms}, or Fermat-type forms of degree $p$ over fields of characteristic $p$ (\cite{Hoffmann2}). This point of view was further enhanced in \cite{Scully}, where various problems relating to the birational geometry of the zero loci of quasilinear $p$-forms, or \emph{quasilinear $p$-hypersurfaces}, were studied. While many of the most interesting results in the present article are concerned with the special case of quasilinear quadratic forms, we develop the material within the more general framework of the theory of quasilinear $p$-forms as far as we can. \\

The main object of study in this paper is the so-called \emph{standard splitting pattern} of a quasilinear $p$-form. This invariant was introduced and studied in the papers \cite{Laghribi1}, \cite{Laghribi2}, \cite{HoffmannLaghribi1} and \cite{Hoffmann2}. The standard splitting pattern is defined using a construction analogous to that of Knebusch's generic splitting tower for nondegenerate quadratic forms over fields of characteristic different from 2, but comparatively little is known about its properties. In contrast to the situation for nondegenerate quadratic forms over fields of characteristic 2, this construction is not ``universal'' in the sense suggested above. In particular, the formulae \ref{eqknebusch} no longer hold if $s>1$. Despite this deficiency, we show here that the standard splitting pattern possesses other important properties which are not shared by its characteristic different from 2 counterpart. For instance, we show that the standard splitting pattern exhibits strong ``functorial'' properties with respect to rational maps of quasilinear $p$-hypersurfaces (see Theorem \ref{functorialityofssp} for a precise statement when $p \leq 3$). We also show that the standard splitting pattern naturally decomposes into two basic pieces, one of which is ``essentially trivial''. We then obtain further information on the general structure of the ``nontrivial'' component of this decomposition. In particular, we show that in the case where $p=2$, this part of the sequence is monotone increasing (Theorem \ref{hilltheorem}). After establishing these (and other) properties of the standard splitting pattern, we then demonstrate its usefulness in studying the general splitting behaviour of quasilinear $p$-forms by providing several interesting applications. The most significant of these appear in the special case where $p=2$. We can mention, for example, an analogue of Vishik's Theorem \ref{outerexcellentconnections} for quasilinear quadratic forms (Theorem \ref{excellentconnectionsquasilinear}), as well as partial results towards an analogue of Karpenko's Theorem \ref{Hoffmann'sconjecture} (Theorem \ref{i1bounds}, Remark \ref{concludingremarks} (1)). Moreover, we prove that the analogue of Conjecture \ref{maxsplittingconjecture} for quasilinear quadratic forms is true (Theorem \ref{quadraticmaxsplitting}). It is worth remarking that this is the only case in which this result is known in all dimensions. 

This paper may be viewed as a continuation of \cite{Scully}, and we make considerable use of the methods developed there throughout. \\

Throughout this article, $p$ will be an arbitrary prime integer and $F$ a field of characteristic $p$. $\overline{F}$ will denote a fixed algebraic closure of $F$. By a \emph{scheme}, we mean a scheme of finite type over a field. By a \emph{variety}, we mean an integral scheme. If $X$ is a scheme over a field $k$, and $x$ is a point $X$, then $k(x)$ will denote the residue field of the local ring of $X$ at $x$. If, moreover, $X$ is a variety, we will write $k(X)$ for the function field of $X$. A scheme will be called \emph{complete} if it is proper over the base field. Finally, all morphisms and rational maps of schemes are defined relative to the appropriate base field.\\

\noindent {\bf Acknowledgements.} This research is supported by a doctoral training grant at the University of Nottingham. I would like to thank Detlev Hoffmann and Alexander Vishik for very helpful discussions on the subject of the paper. \\

\section{Quasilinear $p$-forms} 

We now recall some of the basic theory of quasilinear $p$-forms as developed in the article \cite{Hoffmann2} of D. Hoffmann. We only discuss the material which will be needed later, and we refer to Hoffmann's paper for any details which we do not provide here.

\subsection{Basic facts.} Let $V$ be a finite dimensional $F$-vector space, and let $\phi \colon V \rightarrow F$ be a homogeneous form of degree $p = \mathrm{char}\;F$.

\begin{definition} In the above notation, the form $\phi$ is called a \emph{quasilinear $p$-form on $V$} if $\phi(v+w) = \phi(v) + \phi(w)$ for all $(v,w) \in V \times V$. \end{definition}

We will say that $\phi$ is a \emph{quasilinear $p$-form over $F$} (or sometimes simply a \emph{form over $F$}) if $\phi$ is a quasilinear $p$-form on some finite dimensional $F$-vector space, which will in turn be denoted by $V_\phi$. In the special case where $p=2$, a quasilinear $p$-form will simply be called a \emph{quasilinear quadratic form}. The dimension $\mathrm{dim}\;\phi$ of $V_\phi$ over $F$ is called the \emph{dimension of $\phi$}. If $\mathrm{dim}\;\phi>1$, we write $X_\phi$ for the projective scheme $\lbrace \phi = 0 \rbrace \subset \mathbb{P}(V_\phi)$ of dimension $\mathrm{dim}\;\phi - 2$. A scheme of this type will be called a \emph{quasilinear $p$-hypersurface}.

A \emph{morphism} $\psi \rightarrow \phi$ of forms over $F$ is an $F$-linear map $f \colon V_\psi \rightarrow V_\phi$ satisfying $\phi(f(v)) = \psi(v)$ for all $v \in V_\psi$. If $f$ is injective, then we say that $\psi$ is a \emph{subform of $\phi$}, and write $\psi \subset \phi$. If $f$ is bijective, $\psi$ and $\phi$ are \emph{isomorphic} and we write $\psi \simeq \phi$. We will say that two forms $\psi$ and $\phi$ over $F$ are \emph{similar} if $\psi \simeq a \phi$ for some $a \in F^*$ (here $a\phi$ is the form on $V_\phi$ defined by $v \mapsto a\phi(v)$). The \emph{direct sum} $\psi \oplus \phi$ and \emph{tensor product} $\psi \otimes \phi$ of forms $\psi$ and $\phi$ are defined in the obvious way. Given a positive integer $n$, we write $n \cdot \phi$ for the direct sum of $\phi$ with itself $n$ times (note that this is not the same as $n \phi$). Given two forms $\psi$ and $\phi$ over $F$, we will say that $\phi$ is \emph{divisible by $\psi$} if there exists a form $\tau$ over $F$ such that $\phi \simeq \psi \otimes \tau$. If $L$ is a field extension of $F$ and $\phi$ is a form over $F$, we write $\phi_L$ for the form over $L$ obtained by the extension of scalars.

If $\phi$ is a quasilinear $p$-form over $F$, then a vector $v \in V_\phi$ is called \emph{isotropic} if $\phi(v) = 0$. We say that the form $\phi$ is \emph{isotropic} if $V_\phi$ contains a nonzero isotropic vector. If $\phi$ is not the zero form, then this can only happen if $\mathrm{dim}\;\phi>1$. In this case, the isotropy of $\phi$ is equivalent to the existence of a rational point on the scheme $X_\phi$. If $V_\phi$ does not contain a nonzero isotropic vector, then $\phi$ is called \emph{anisotropic}. By the definition of a quasilinear $p$-form, the subset of all isotropic vectors in $V_\phi$ is an $F$-linear subspace of $V_\phi$. Its dimension is denoted by $i_0(\phi)$, and is called the \emph{defect index of $\phi$}. In the same way, the set $D(\phi) = \lbrace \phi(v)\;|\;v \in V_\phi \rbrace$ of all values represented by $\phi$ is an $F^p$-linear subspace of $F$. We have the following basic observation.

\begin{lemma}[{cf. \citep[Proposition 2.6]{Hoffmann2}}] \label{valuesdimension} Let $\phi$ be a quasilinear $p$-form over $F$. Then $\mathrm{dim}_{F^p}D(\phi) = \mathrm{dim}\;\phi - i_0(\phi)$. In particular, $\phi$ is anisotropic if and only if $\mathrm{dim}_{F^p}D(\phi) = \mathrm{dim}\;\phi$. 
\begin{proof} Let $U \subset V_\phi$ be a subspace complementary to the subspace of all isotropic vectors in $V_\phi$. We may regard the abelian group $D(\phi)$ as an $F$-vector space, with $a \in F$ acting via left multiplication by $a^p$. Then the evaluation map $\phi \colon U \rightarrow D(\phi)$ is an $F$-linear isomorphism, and since the $F$-dimension of $D(\phi)$ agrees with its $F^p$-dimension, the lemma follows. \end{proof} \end{lemma}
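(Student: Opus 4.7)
My plan is to view $\phi$ as a map of abelian groups with a clearly controlled semi-linearity, then identify the image with a suitable quotient.

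Since $\phi$ is quasilinear, the evaluation map $\phi \colon V_\phi \to F$ is $\mathbb{Z}$-linear, and since it is homogeneous of degree $p$, we have $\phi(av) = a^p \phi(v)$ for $a \in F$. The first observation shows that the set $I$ of isotropic vectors in $V_\phi$ is a subgroup of $V_\phi$; combined with the second, it is even an $F$-subspace, and by definition $\dim_F I = i_0(\phi)$. Next I would pick an $F$-linear complement $U$ of $I$ in $V_\phi$, so that $\dim_F U = \dim\phi - i_0(\phi)$.

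The point is now that the restriction $\phi|_U \colon U \to D(\phi)$ is surjective (since any value of $\phi$ is already attained on $U$ modulo $I$) and injective (since $\mathrm{ker}(\phi|_U) = U \cap I = 0$), so it is an isomorphism of abelian groups. To convert this into a dimension statement, I would equip $D(\phi)$ with the twisted $F$-action $a \star x \coloneqq a^p x$; then the identity $\phi(au) = a^p\phi(u) = a \star \phi(u)$ shows that $\phi|_U$ is an isomorphism of $F$-vector spaces. Hence the $F$-dimension of $D(\phi)$ under the twisted action equals $\dim\phi - i_0(\phi)$.

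Finally I would observe that the twisted $F$-module structure on $D(\phi)$ and its intrinsic $F^p$-module structure have the same subspaces and the same notion of linear independence: a family $(x_i)$ is $F$-linearly independent for $\star$ precisely when $\sum a_i^p x_i = 0$ implies all $a_i = 0$, which is the same as $F^p$-linear independence under the standard action, since $a \mapsto a^p$ is a bijection $F \to F^p$. Therefore the twisted $F$-dimension coincides with $\dim_{F^p} D(\phi)$, and the formula $\dim_{F^p} D(\phi) = \dim\phi - i_0(\phi)$ follows. The ``anisotropic iff equality'' clause is then just the statement that $i_0(\phi) = 0$, which is the definition.

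There is no real obstacle here; the only point requiring any care is the bookkeeping between the natural $F^p$-structure on $D(\phi)$ and the twisted $F$-structure induced by the Frobenius, and the simple remark that $F \to F^p$, $a \mapsto a^p$, is a bijection handles this cleanly.
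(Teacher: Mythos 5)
Your proof is correct and follows essentially the same route as the paper: choose a complement $U$ to the space of isotropic vectors, observe that $\phi|_U \colon U \to D(\phi)$ is an isomorphism for the Frobenius-twisted $F$-structure on $D(\phi)$, and note that the twisted $F$-dimension equals the $F^p$-dimension. You simply spell out the injectivity/surjectivity of $\phi|_U$ and the dimension bookkeeping more explicitly than the paper does.
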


Now, given elements $a_1,...,a_n \in F$, we will write $\form{a_1,...,a_n}$ for the quasilinear $p$-form $a_1X_1^p + ... + a_nX_n^p$ on the $F$-vector space $\bigoplus_{i=1}^m F$ in its standard basis. It is clear from the definition that any quasilinear $p$-form is isomorphic to a form of this type.

\begin{lemma} \label{constructionpforms} Let $U$ be a finite dimensional $F^p$-linear subspace of $F$. Then there exists a unique (up to isomorphism) anisotropic quasilinear $p$-form $\phi$ over $F$ such that $D(\phi) = U$.
\begin{proof} Let $a_1,...,a_n \in F$ be a basis of $U$ over $F^p$, and let $\phi = \form{a_1,...,a_n}$. Then $\phi$ is anisotropic by Lemma \ref{valuesdimension}, and since $D(\phi) = U$, existence is proved. For uniqueness, suppose that $\psi$ is another anisotropic form over $F$ with $D(\psi) = U$. By Lemma \ref{valuesdimension}, there are bases $v_1,...,v_n$ and $w_1,...,w_n$ of $V_\psi$ and $V_\phi$ respectively such that $\psi(v_i) = a_i = \phi(w_i)$ for all $i$. The $F$-linear map $V_\psi \rightarrow V_\phi$ which sends $v_i$ to $w_i$ defines an isomorphism $\psi \simeq \phi$. \end{proof} \end{lemma}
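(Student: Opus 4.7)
The plan is to handle existence and uniqueness separately, using Lemma \ref{valuesdimension} as the main tool in both cases.

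For existence, I would pick an $F^p$-basis $a_1,\ldots,a_n$ of $U$ (which is possible since $U$ is finite-dimensional over $F^p$) and simply define $\phi = \form{a_1,\ldots,a_n}$. By the additivity built into the definition of a quasilinear $p$-form, every value of $\phi$ is of the shape $\sum a_i c_i^p$ with $c_i \in F$, so $D(\phi) \subset U$; conversely, each $a_i$ is represented by the $i$-th standard basis vector, so $D(\phi) = U$. Since $\dim_{F^p} D(\phi) = n = \dim \phi$, Lemma \ref{valuesdimension} forces $i_0(\phi) = 0$, i.e.\ $\phi$ is anisotropic.

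For uniqueness, suppose $\psi$ is any anisotropic form over $F$ with $D(\psi) = U$. Applying Lemma \ref{valuesdimension} to $\psi$ gives $\dim \psi = \dim_{F^p} U = n$, which at least matches $\dim \phi$. To build an explicit isomorphism $\psi \simeq \phi$, I would use the observation (implicit in the proof of Lemma \ref{valuesdimension}) that when $\psi$ is anisotropic, the evaluation map $\psi : V_\psi \to D(\psi)$ is an $F$-linear bijection, where $F$ acts on the target through $a \cdot x = a^p x$. In particular, for each $a_i$ in our chosen $F^p$-basis of $U$, there exists a unique $v_i \in V_\psi$ with $\psi(v_i) = a_i$. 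The $F$-linearity just described ensures that $v_1,\ldots,v_n$ are an $F$-basis of $V_\psi$. Sending $v_i$ to the $i$-th standard basis vector of $V_\phi$ then gives an $F$-linear map $V_\psi \to V_\phi$ which agrees with $\psi$ and $\phi$ on the basis, and hence (again by additivity and $p$-homogeneity) a morphism of forms, in fact an isomorphism.

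The only mildly delicate point, and the step I would flag as the main obstacle, is the uniqueness argument: one needs to know not merely that $\psi$ represents each $a_i$, but that a choice of preimages $v_i$ can be made simultaneously so as to form an $F$-basis of $V_\psi$. This is exactly what the twisted $F$-linear isomorphism $V_\psi \cong D(\psi)$ from the proof of Lemma \ref{valuesdimension} provides; without it, one would have to argue by induction on $n$, peeling off one-dimensional subforms, which is essentially the same computation in disguise.
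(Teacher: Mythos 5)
Your proposal is correct and follows essentially the same route as the paper: existence by taking $\phi = \form{a_1,\ldots,a_n}$ for an $F^p$-basis of $U$ and invoking Lemma \ref{valuesdimension}, and uniqueness via the twisted $F$-linear isomorphism $V_\psi \cong D(\psi)$ from the proof of Lemma \ref{valuesdimension} to lift the basis $a_1,\ldots,a_n$ to an $F$-basis of $V_\psi$. The point you flag as the ``main obstacle'' is exactly what the paper is (silently) relying on when it asserts the existence of such bases $v_i$ and $w_i$, so you have correctly identified where the content lies.
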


As a corollary, we see that anisotropic forms are determined up to isomorphism by the values they represent.

\begin{corollary}[cf. {\citep[Proposition 2.6]{Hoffmann2}}] \label{anisotropicclassification} Let $\psi$ and $\phi$ be anisotropic quasilinear $p$-forms over $F$. Then $\psi \subset \phi$ if and only if $D(\psi) \subset D(\phi)$. In particular, $\psi \simeq \phi$ if and only if $D(\psi) = D(\phi)$.\end{corollary}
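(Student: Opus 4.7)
The plan is to deduce the corollary almost directly from Lemmas \ref{valuesdimension} and \ref{constructionpforms}, which together provide a complete dictionary between anisotropic quasilinear $p$-forms and finite dimensional $F^p$-subspaces of $F$.

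First I would dispose of the easy direction: if $\psi \subset \phi$ via some injection $f \colon V_\psi \hookrightarrow V_\phi$, then for any $v \in V_\psi$ we have $\psi(v) = \phi(f(v)) \in D(\phi)$, so $D(\psi) \subset D(\phi)$. This requires neither anisotropy nor Lemma \ref{constructionpforms}.

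For the nontrivial direction, assume $D(\psi) \subset D(\phi)$. The idea is to build an explicit diagonal presentation of $\phi$ in which $\psi$ visibly occurs as a subform. Choose an $F^p$-basis $a_1,\ldots,a_m$ of $D(\psi)$ and extend it to an $F^p$-basis $a_1,\ldots,a_n$ of $D(\phi)$, where $m = \dim_{F^p} D(\psi)$ and $n = \dim_{F^p} D(\phi)$. Since $\psi$ and $\phi$ are anisotropic, Lemma \ref{valuesdimension} gives $\mathrm{dim}\;\psi = m$ and $\mathrm{dim}\;\phi = n$. The forms $\form{a_1,\ldots,a_m}$ and $\form{a_1,\ldots,a_n}$ are anisotropic by Lemma \ref{valuesdimension} (their value sets have the right $F^p$-dimensions), represent exactly $D(\psi)$ and $D(\phi)$ respectively, and so by the uniqueness part of Lemma \ref{constructionpforms} we have $\psi \simeq \form{a_1,\ldots,a_m}$ and $\phi \simeq \form{a_1,\ldots,a_n}$. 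Since $\form{a_1,\ldots,a_m}$ is evidently a subform of $\form{a_1,\ldots,a_n}$ (restrict to the span of the first $m$ standard basis vectors), this gives $\psi \subset \phi$, as desired.

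The ``in particular'' clause then follows by applying the equivalence twice: $D(\psi) = D(\phi)$ yields both $\psi \subset \phi$ and $\phi \subset \psi$, and a comparison of dimensions via Lemma \ref{valuesdimension} forces these inclusions to be isomorphisms. Conversely, $\psi \simeq \phi$ obviously implies $D(\psi) = D(\phi)$.

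There is no real obstacle here; the work was done in Lemmas \ref{valuesdimension} and \ref{constructionpforms}, and the only minor care required is to extend an $F^p$-basis of $D(\psi)$ to one of $D(\phi)$ so that both forms are simultaneously realized in diagonal shape with compatible bases.
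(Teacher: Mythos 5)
Your proof is correct and takes essentially the approach the paper has in mind: the paper presents this corollary as an immediate consequence of Lemma \ref{constructionpforms}, and your argument — extending an $F^p$-basis of $D(\psi)$ to one of $D(\phi)$ to obtain compatible diagonal presentations, then invoking the uniqueness statement of Lemma \ref{constructionpforms} — is precisely the natural way to make that deduction explicit.
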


In view of Lemma \ref{constructionpforms}, one can now make the following definition.

\begin{definition} Let $\phi$ be a quasilinear $p$-form over $F$. The unique (up to isomorphism) anisotropic form $\phi_{an}$ over $F$ such that $D(\phi_{an}) = D(\phi)$ is called the \emph{anisotropic part} of $\phi$. \end{definition}

The following statement follows immediately from the proof of Lemma \ref{valuesdimension}.

\begin{proposition}[{cf. \citep[Proposition 2.6]{Hoffmann2}}] \label{classification} Let $\phi$ be a quasilinear $p$-form over $F$. Then $\phi \simeq \phi_{an} \oplus (i_0(\phi) \cdot \form{0})$. \end{proposition}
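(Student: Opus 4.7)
The plan is to extract the decomposition directly from the proof of Lemma \ref{valuesdimension}. Let $I \subset V_\phi$ denote the $F$-subspace of all isotropic vectors, so $\dim_F I = i_0(\phi)$ by definition, and let $U$ be an $F$-linear complement to $I$ in $V_\phi$, so that $\dim_F U = \dim \phi - i_0(\phi)$.

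First I would observe that since $\phi$ is quasilinear, it is additive, so for any $u \in U$ and $i \in I$ we have $\phi(u+i) = \phi(u) + \phi(i) = \phi(u)$. Consequently, restricting $\phi$ to $I$ yields the zero form $i_0(\phi) \cdot \form{0}$, and restricting $\phi$ to $U$ yields a quasilinear $p$-form $\phi|_U$ on $U$ with $D(\phi|_U) = D(\phi)$. The natural $F$-linear isomorphism $V_\phi \simeq U \oplus I$ then gives an isomorphism of quasilinear $p$-forms $\phi \simeq \phi|_U \oplus (i_0(\phi) \cdot \form{0})$.

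Next I would argue that $\phi|_U$ is anisotropic. Indeed, as was shown in the proof of Lemma \ref{valuesdimension}, the evaluation map $\phi \colon U \to D(\phi)$ is an isomorphism of $F$-vector spaces (where $F$ acts on $D(\phi)$ via $a \cdot x = a^p x$), so $\dim_{F^p} D(\phi|_U) = \dim_F U = \dim (\phi|_U)$, and hence $\phi|_U$ is anisotropic by Lemma \ref{valuesdimension}. Since $D(\phi|_U) = D(\phi) = D(\phi_{an})$, Corollary \ref{anisotropicclassification} (or directly the uniqueness part of Lemma \ref{constructionpforms}) yields $\phi|_U \simeq \phi_{an}$, which combined with the previous paragraph gives the claimed decomposition.

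There is no real obstacle here; the statement is essentially a repackaging of the linear-algebraic content already established in the proof of Lemma \ref{valuesdimension}, combined with the uniqueness assertion of Lemma \ref{constructionpforms}. The only subtlety to mention is the use of additivity of $\phi$ to ensure that $\phi$ truly splits as a direct sum on the decomposition $V_\phi = U \oplus I$, which is precisely the defining feature of quasilinear $p$-forms.
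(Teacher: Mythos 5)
Your proof is correct and takes essentially the same approach the paper intends: the paper simply asserts that the proposition ``follows immediately from the proof of Lemma \ref{valuesdimension},'' and you have spelled out precisely that argument (using the complement $U$ to the isotropic subspace $I$, additivity to split $\phi$ along $V_\phi = U \oplus I$, anisotropy of $\phi|_U$ via the evaluation isomorphism, and Corollary \ref{anisotropicclassification} to identify $\phi|_U$ with $\phi_{an}$).
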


In summary, we see that the isomorphism class of a quasilinear $p$-form $\phi$ over $F$ is determined by two invariants, the $F^p$-vector space $D(\phi)$ and the defect index $i_0(\phi)$. Clearly we have $\mathrm{dim}\;\phi_{an} \geq 1$. If $\mathrm{dim}\;\phi_{an} = 1$, then we say that $\phi$ is \emph{completely split}. Given two forms $\psi$ and $\phi$ over $F$, we write $\psi \sim \phi$ whenever $\psi_{an} \simeq \phi_{an}$. We conclude this subsection with the following observation, which follows immediately from Lemma \ref{valuesdimension}.

\begin{lemma} \label{indexdirectsum} Let $\psi$ and $\phi$ be anisotropic quasilinear $p$-forms over $F$. Then $i_0(\psi \oplus \phi) = \mathrm{dim}_{F^p}(D(\psi) \cap D(\phi))$. \end{lemma}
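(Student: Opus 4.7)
The plan is to reduce everything to the computation given by Lemma \ref{valuesdimension} applied to the form $\psi \oplus \phi$, and then exploit the fact that the value sets combine in a transparent way.

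First I would note the key identity $D(\psi \oplus \phi) = D(\psi) + D(\phi)$, viewed as $F^p$-subspaces of $F$. This is immediate from the definition of the direct sum: any value of $\psi \oplus \phi$ has the form $\psi(v) + \phi(w)$ with $v \in V_\psi$, $w \in V_\phi$, and conversely every sum of this form is represented. Then, applying Lemma \ref{valuesdimension} to $\psi \oplus \phi$, I get
\begin{equation*}
i_0(\psi \oplus \phi) = \dim(\psi \oplus \phi) - \dim_{F^p} D(\psi \oplus \phi) = \dim \psi + \dim \phi - \dim_{F^p}\bigl(D(\psi) + D(\phi)\bigr).
\end{equation*}

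Next I would use the anisotropy hypothesis. By Lemma \ref{valuesdimension} again, since $\psi$ and $\phi$ are anisotropic, $\dim \psi = \dim_{F^p} D(\psi)$ and $\dim \phi = \dim_{F^p} D(\phi)$. Combining this with the elementary formula
\begin{equation*}
\dim_{F^p}\bigl(D(\psi) + D(\phi)\bigr) = \dim_{F^p} D(\psi) + \dim_{F^p} D(\phi) - \dim_{F^p}\bigl(D(\psi) \cap D(\phi)\bigr)
\end{equation*}
for subspaces of an $F^p$-vector space, the claim falls out after substitution: all the $\dim_{F^p} D(\psi)$ and $\dim_{F^p} D(\phi)$ terms cancel, leaving exactly $\dim_{F^p}(D(\psi) \cap D(\phi))$ on the right-hand side.

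There is not really a serious obstacle here; the proof is essentially a bookkeeping exercise. The only ``content'' is the remark that the value set of a direct sum is the sum of the value sets, which is tautological from the definition of $\oplus$. The role of the anisotropy assumption is simply to allow us to identify the dimension of a form with the $F^p$-dimension of its value set, so that the Grassmann formula for subspace sums can be converted into a statement about dimensions of forms.
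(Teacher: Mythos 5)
Your proof is correct and matches the route the paper implicitly intends: the paper states only that the lemma ``follows immediately from Lemma~\ref{valuesdimension},'' and your argument — compute $D(\psi \oplus \phi) = D(\psi) + D(\phi)$, apply Lemma~\ref{valuesdimension} to $\psi\oplus\phi$, use anisotropy to replace $\dim\psi$ and $\dim\phi$ by $\dim_{F^p}D(\psi)$ and $\dim_{F^p}D(\phi)$, and finish with the Grassmann dimension formula — is exactly the bookkeeping that makes that remark precise.
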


\subsection{Quasi-Pfister forms.} In the theory of nondegenerate quadratic forms over fields of characteristic different from 2, an important role is played by the class of so-called \emph{Pfister forms}. In the current setting, one may define analogues of these forms in the following way. First, for any $a \in F$, we write $\pfister{a}$ for the form $\form{1,a,a^2,...,a^{p-1}}$ of dimension $p$ over $F$. Then, given $n$ elements $a_1,...,a_n \in F$, we define a form $\pfister{a_1,...,a_n}$ of dimension $p^n$ over $F$ as the $n$-fold tensor product $\pfister{a_1} \otimes ... \otimes \pfister{a_n}$.

\begin{definition} Let $\pi$ be a quasilinear $p$-form over $F$. Then $\pi$ is called a \emph{quasi-Pfister form} if $\pi = \form{1}$ or $\pi = \pfister{a_1,...,a_n}$ for some $a_i \in F$. \end{definition}

Quasi-Pfister forms were studied extensively in the article \cite{Hoffmann2}, where it was shown that these forms are distinguished by properties completely analogous to those which characterise nondegenerate quadratic Pfister forms over fields of characteristic different from 2. For the moment, we will only need a few simple observations.

\begin{lemma}[{cf. \citep[\S 4]{Hoffmann2}}] \label{Pfisterisotropy} Let $\pi = \pfister{a_1,...,a_n}$ for some $a_i \in F$. Then
\begin{enumerate} 
\item[$\mathrm{(1)}$] $D(\pi) = F^p(a_1,...,a_n)$. In particular, $D(\pi)$ is a field.
\item[$\mathrm{(2)}$] $\pi$ is anisotropic if and only if $[F^p(a_1,...,a_n):F^p] = p^n$.
\item[$\mathrm{(3)}$] $\pi_{an}$ is a quasi-Pfister form. \end{enumerate}
\begin{proof} Part (1) follows from the definition. Given (1), (2) follows from Lemma \ref{valuesdimension}. For (3), we may assume that $\pi$ is not completely split. Now, since $D(\pi)$ is a purely inseparable field extension of $F^p$, there is $m \in [1,n]$ such that $[D(\pi):F^p] = p^m$. After reordering the $a_i$ if necessary, we may assume that $D(\pi) = F^p(a_1,...,a_m)$. By (1), the quasi-Pfister form $\tau = \pfister{a_1,...,a_m}$ over $F$ is anisotropic, and since $D(\tau) = D(\pi)$, we have $\pi_{an} \simeq \tau$.\end{proof} \end{lemma}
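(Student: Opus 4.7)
The plan is to establish (1), (2) and (3) in succession, using Lemma \ref{valuesdimension} and Corollary \ref{anisotropicclassification} as the principal external inputs. The heart of the argument lies in a direct computation for (1); once that is in place, (2) and (3) follow essentially formally.

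For (1), I would diagonalise the tensor product, obtaining an isomorphism $\pi \simeq \form{a_1^{i_1}\cdots a_n^{i_n} \,:\, (i_1,\ldots,i_n) \in [0,p-1]^n}$. By the definition of $D$, it follows that $D(\pi)$ is the $F^p$-span of these monomials, i.e.\ the subring $R \coloneqq F^p[a_1,\ldots,a_n]$ of $F$. Since $a_j^p \in F^p$ for every $j$, $R$ is a finite-dimensional $F^p$-subalgebra of $F$, hence an integral domain which is finite over the field $F^p$, and therefore itself a field. Thus $D(\pi) = F^p[a_1,\ldots,a_n] = F^p(a_1,\ldots,a_n)$, proving (1).

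Part (2) is then immediate from Lemma \ref{valuesdimension}: $\pi$ is anisotropic if and only if $\dim_{F^p} D(\pi) = \dim\pi = p^n$, and by (1) this is exactly the condition $[F^p(a_1,\ldots,a_n):F^p] = p^n$.

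For (3), assume without loss of generality that $\pi$ is not anisotropic (otherwise $\pi_{an} = \pi$ is already quasi-Pfister), and set $p^m \coloneqq [D(\pi):F^p]$, so $0 \leq m < n$. If $m = 0$ then $\pi$ is completely split and $\pi_{an} \simeq \form{1}$, a quasi-Pfister form by definition. If $m \geq 1$, I would perform a greedy reordering, discarding any $a_j$ lying in the subfield of $F$ generated over $F^p$ by the preceding $a_i$'s; this leaves a minimal generating subset which, after relabelling, is $a_1,\ldots,a_m$. Then by (1) and (2), the form $\tau \coloneqq \pfister{a_1,\ldots,a_m}$ is anisotropic and satisfies $D(\tau) = F^p(a_1,\ldots,a_m) = D(\pi) = D(\pi_{an})$, so Corollary \ref{anisotropicclassification} yields $\pi_{an} \simeq \tau$. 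The only non-formal step in the whole argument is verifying that $F^p[a_1,\ldots,a_n]$ is a field in (1); this is a standard commutative-algebra fact, but is the one place where the inseparability of the extension (and thus the characteristic-$p$ hypothesis) is genuinely used.
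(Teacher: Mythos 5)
Your proposal is correct and follows essentially the same route as the paper: part (1) by the monomial description of $D(\pi)$, part (2) via Lemma \ref{valuesdimension}, and part (3) by paring down to a minimal generating subset of the $a_i$'s and invoking Corollary \ref{anisotropicclassification}. The paper leaves (1) as ``follows from the definition'' and the reordering in (3) implicit; you have simply filled in those details (the finite-domain-over-a-field argument and the greedy selection), which is sound but not a genuinely different method.
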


\subsection{The norm form and norm degree.} To any quasilinear $p$-form $\phi$, we can associate in a natural way an anisotropic quasi-Pfister form.

\begin{lemma} \label{normformlemma} Let $\phi$ be a quasilinear $p$-form over $F$. Then there exists a unique (up to isomorphism) anisotropic quasi-Pfister form $\phi_{qp}$ over $F$ with the following properties.
\begin{enumerate} \item[$\mathrm{(1)}$] $\phi_{an}$ is similar to a subform of $\phi_{qp}$.
\item[$\mathrm{(2)}$] If $\phi_{an}$ is similar to another anisotropic quasi-Pfister form $\pi$, then $\phi_{qp} \subset \pi$. \end{enumerate}
\begin{proof} The uniqueness is clear. To see the existence, we may assume that $\phi$ is anisotropic. Let $a_1,...,a_n \in F$ be such that $\phi \simeq \form{a_1,...,a_n}$. We may assume that $n > 1$ and $a_1 \neq 0$. Let $\tau = \pfister{\frac{a_2}{a_1},...,\frac{a_n}{a_1}}$, and put $\phi_{qp} = \tau_{an}$. Since $D(\phi) \subset D(a_1\tau) = D(a_1\phi_{qp})$, $\phi$ is similar to a subform of $\phi_{qp}$ by Corollary \ref{anisotropicclassification}. Suppose now that $\phi$ is similar to a subform of another anisotropic quasi-Pfister form $\pi$. Then $aD(\phi) = D(a\phi) \subset D(\pi)$ for some $a \in F^*$. Since $D(\pi)$ is a field, it follows that
\begin{equation*} D(\phi_{qp}) = F^p(\frac{a_2}{a_1},...,\frac{a_n}{a_1}) = F^p(\frac{aa_2}{aa_1},...,\frac{aa_n}{aa_1}) \subset D(\pi). \end{equation*}
By Corollary \ref{anisotropicclassification}, $\phi_{qp} \subset \pi$, as we wanted. \end{proof} \end{lemma}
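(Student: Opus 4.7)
The plan is to dispatch uniqueness first and then construct a candidate that simultaneously realises properties (1) and (2). Uniqueness is essentially formal: if $\phi_{qp}$ and $\phi_{qp}'$ both satisfy (1) and (2), then applying (2) of each to the other produces mutual inclusions $\phi_{qp} \subset \phi_{qp}'$ and $\phi_{qp}' \subset \phi_{qp}$, and since both are anisotropic, Corollary \ref{anisotropicclassification} forces $D(\phi_{qp}) = D(\phi_{qp}')$ and hence an isomorphism. For existence I would reduce to the case where $\phi$ is anisotropic, since only $\phi_{an}$ enters the statement.

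The candidate to try is the obvious one: writing $\phi \simeq \form{a_1,\ldots,a_n}$ with $a_1 \neq 0$ (possible by anisotropy, and trivial if $\dim \phi = 1$ with $\phi_{qp} = \form{1}$), set $\tau = \pfister{a_2/a_1,\ldots,a_n/a_1}$ and define $\phi_{qp} = \tau_{an}$. This is an anisotropic quasi-Pfister form by Lemma \ref{Pfisterisotropy}(3). For property (1), one notes $a_i = a_1 \cdot (a_i/a_1) \in a_1 D(\tau) = D(a_1 \tau_{an})$ for each $i$, so $D(\phi) \subset D(a_1 \phi_{qp})$ and Corollary \ref{anisotropicclassification} gives $\phi \subset a_1 \phi_{qp}$, i.e.\ $\phi$ is similar to a subform of $\phi_{qp}$.

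Property (2) is where the real content lies. Suppose $\phi$ is similar to a subform of some anisotropic quasi-Pfister $\pi$, say $aD(\phi) \subset D(\pi)$ for some $a \in F^*$. Then $aa_i \in D(\pi)$ for each $i$, and crucially $D(\pi)$ is a \emph{field} by Lemma \ref{Pfisterisotropy}(1), so we may divide: $a_i/a_1 = (aa_i)/(aa_1) \in D(\pi)$ for $i = 2,\ldots,n$. This yields $D(\phi_{qp}) = F^p(a_2/a_1,\ldots,a_n/a_1) \subset D(\pi)$, and one more appeal to Corollary \ref{anisotropicclassification} gives $\phi_{qp} \subset \pi$.

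The main conceptual obstacle is spotting the correct normalisation: naively taking $\pfister{a_1,\ldots,a_n}$ would not make $\phi$ a subform of (a similar copy of) a Pfister form in a minimal way, because Pfister forms always represent $1$ while $\phi$ need not. Dividing out by $a_1$ is the right move, and what makes both directions work is the field property of $D(\pi)$ for anisotropic quasi-Pfister forms — this is what permits the passage from $aa_i \in D(\pi)$ to $a_i/a_1 \in D(\pi)$ in the verification of (2). Once this observation is in hand, the remaining work is routine bookkeeping with Corollary \ref{anisotropicclassification}.
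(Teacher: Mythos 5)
Your proposal is correct and follows essentially the same route as the paper's proof: the same normalisation $\tau = \pfister{a_2/a_1,\ldots,a_n/a_1}$ with $\phi_{qp} = \tau_{an}$, the same use of Corollary \ref{anisotropicclassification} for both directions, and the same appeal to the field property of $D(\pi)$ (Lemma \ref{Pfisterisotropy}(1)) to pass from $aa_i \in D(\pi)$ to $a_i/a_1 \in D(\pi)$ in the minimality verification. The only differences are cosmetic — you spell out the uniqueness argument that the paper calls ``clear'' and you handle $\dim\phi = 1$ explicitly.
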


We can now make the following definitions.

\begin{definition}[{cf. \citep[\S 4]{Hoffmann2}}] The form $\phi_{qp}$ of Lemma \ref{normformlemma} is called the \emph{norm form of $\phi$}. The dimension of $\phi_{qp}$ is called the \emph{norm degree of $\phi$}, and is denoted by $\mathrm{ndeg}\;\phi$. \end{definition}

\begin{remark} \label{completelysplitndeg} By its definition, the norm degree is always a power of $p$. By Lemma \ref{normformlemma} (1), we have $\mathrm{ndeg}\;\phi = 1$ if and only if $\phi$ is completely split. \end{remark}

\section{Quasilinear $p$-forms and extensions of the base field}

We now collect some basic facts concerning the behaviour of quasilinear $p$-forms over extensions of the base field. Again, we remark that most (but not all) of the material in this section can be found in the article \cite{Hoffmann2}.

\subsection{Some general observations.} We begin by noting the following lemma, which will be used repeatedly in what follows.

\begin{lemma}[{\citep[Lemma 5.1]{Hoffmann2}}] \label{excellence} Let $\phi$ be a quasilinear $p$-form over $F$, and let $L$ be a field extension of $F$. Then there exists a subform $\psi \subset \phi$ such that $(\phi_L)_{an} \simeq \psi_L$.
\begin{proof} The point to observe is that the $L^p$-vector space $D(\phi_L)$ is spanned by elements of $D(\phi)$. In particular, we can find a basis $a_1,...,a_n$ of $D(\phi_L)$ consisting of elements of $D(\phi)$. By Lemma \ref{constructionpforms} and Corollary \ref{anisotropicclassification}, the form $\psi = \form{a_1,...,a_n}$ has required property. \end{proof} \end{lemma}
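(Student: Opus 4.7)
The plan is to build $\psi$ directly from the classification machinery developed in \S 2. By Corollary \ref{anisotropicclassification}, $(\phi_L)_{an}$ is determined up to isomorphism by its value set, so it suffices to produce a subform $\psi \subset \phi$ whose base change $\psi_L$ is anisotropic and satisfies $D(\psi_L) = D(\phi_L)$. The central observation driving the whole argument is that quasilinearity forces $D(\phi_L)$ to be spanned, as an $L^p$-vector space, by elements of $D(\phi) \subset F$. Indeed, if $w_1,\ldots,w_d$ is any $F$-basis of $V_\phi$, then an arbitrary vector $\sum \lambda_i (w_i \otimes 1) \in V_{\phi_L}$ is sent by $\phi_L$ to $\sum \lambda_i^p \phi(w_i)$, so every value of $\phi_L$ is an $L^p$-linear combination of the values $\phi(w_i) \in D(\phi)$.

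First I would use this observation to choose a basis $a_1,\ldots,a_n$ of the $L^p$-vector space $D(\phi_L)$ consisting of elements of $D(\phi)$, and for each $i$ pick $v_i \in V_\phi$ with $\phi(v_i) = a_i$. Next I would verify that $v_1,\ldots,v_n$ are $F$-linearly independent in $V_\phi$: any relation $\sum c_i v_i = 0$ with $c_i \in F$, after applying $\phi$, yields $\sum c_i^p a_i = 0$, and since the $a_i$ are $L^p$-linearly independent this forces all $c_i = 0$. Letting $W \subset V_\phi$ be the span of the $v_i$ and $\psi := \phi|_W$, I obtain a subform $\psi \subset \phi$ with $\psi \simeq \form{a_1,\ldots,a_n}$.

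To finish, I would observe that $\psi_L \simeq \form{a_1,\ldots,a_n}$ (viewed over $L$) has $D(\psi_L)$ equal to the $L^p$-span of the $a_i$, which is exactly $D(\phi_L)$ by construction. Because the $a_i$ form an $L^p$-basis of this space, Lemma \ref{valuesdimension} shows that $\psi_L$ is anisotropic of dimension $n = \dim_{L^p} D(\phi_L) = \dim (\phi_L)_{an}$. Then Corollary \ref{anisotropicclassification} gives $\psi_L \simeq (\phi_L)_{an}$, as required.

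There is no serious obstacle here; the only point requiring attention is the $F$-linear independence of the chosen preimages $v_i$, which is the reason one must select $a_1,\ldots,a_n$ as a basis of $D(\phi_L)$ (rather than merely a spanning set) at the outset. Every other step is a direct application of the classification of anisotropic quasilinear $p$-forms by their value sets established in \S 2.
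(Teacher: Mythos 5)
Your proof is correct and takes essentially the same route as the paper: both hinge on the observation that $D(\phi_L)$ is $L^p$-spanned by elements of $D(\phi)$, pick a basis $a_1,\dots,a_n$ from $D(\phi)$, and set $\psi = \form{a_1,\dots,a_n}$. The only difference is cosmetic: you construct the embedding $\psi \subset \phi$ by hand by checking that preimages $v_i$ of the $a_i$ are $F$-linearly independent, whereas the paper gets $\psi \subset \phi_{an} \subset \phi$ by citing Lemma~\ref{constructionpforms} and Corollary~\ref{anisotropicclassification}.
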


Next we consider the behaviour of the norm degree invariant under field extensions. The following lemma is clear from the construction of the norm form (cf. Lemma \ref{normformlemma} and its proof).

\begin{lemma}[{cf. \citep[Remark 4.11]{Hoffmann2}}] \label{normformext} Let $\phi$ be a quasilinear $p$-form over $F$, and let $L$ be a field extension of $F$. Then $(\phi_L)_{qp}$ is the anisotropic part of $(\phi_{qp})_L$. In particular, $\mathrm{ndeg}\;\phi_L < \mathrm{ndeg}\;\phi$ if and only if $(\phi_{qp})_L$ is isotropic. \end{lemma}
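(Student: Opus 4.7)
My plan is to verify that $((\phi_{qp})_L)_{an}$ satisfies the two defining properties of the norm form of $\phi_L$ from Lemma \ref{normformlemma}. The form $((\phi_{qp})_L)_{an}$ is automatically a quasi-Pfister form: $\phi_{qp}$ is a quasi-Pfister form, hence so is $(\phi_{qp})_L$ (the class of quasi-Pfister forms is clearly stable under base change), and Lemma \ref{Pfisterisotropy}(3) then guarantees that its anisotropic part is again quasi-Pfister. So it remains to show (1) $(\phi_L)_{an}$ is similar to a subform of $((\phi_{qp})_L)_{an}$, and (2) $((\phi_{qp})_L)_{an}$ embeds in any anisotropic quasi-Pfister form over $L$ having a subform similar to $(\phi_L)_{an}$.

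For (1), I would start from the fact (property (1) of $\phi_{qp}$) that $cD(\phi_{an}) \subset D(\phi_{qp})$ for some $c \in F^{\ast}$. Taking $L^p$-spans and using the observations $D((\phi_L)_{an}) = D(\phi_L) = L^p \cdot D(\phi)$ and similarly for $\phi_{qp}$, this inclusion extends to $c D((\phi_L)_{an}) \subset D(((\phi_{qp})_L)_{an})$; Corollary \ref{anisotropicclassification} then promotes this to the required subform relation. For (2), I would invoke the explicit construction of $\phi_{qp}$ in the proof of Lemma \ref{normformlemma}: writing $\phi_{an} \simeq \form{a_1, \ldots, a_n}$ with $a_1 \neq 0$, we have $D((\phi_{qp})_L) = L^p(a_2/a_1, \ldots, a_n/a_1)$. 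If $\pi$ is an anisotropic quasi-Pfister form over $L$ and $dD((\phi_L)_{an}) \subset D(\pi)$ for some $d \in L^{\ast}$, then each $da_i$ lies in $D(\pi)$, and since $D(\pi)$ is a field by Lemma \ref{Pfisterisotropy}(1) the ratios $a_i/a_1 = (da_i)/(da_1)$ also lie in $D(\pi)$. Hence $D(((\phi_{qp})_L)_{an}) = L^p(a_2/a_1,\ldots, a_n/a_1) \subset D(\pi)$, and Corollary \ref{anisotropicclassification} yields $((\phi_{qp})_L)_{an} \subset \pi$.

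The ``in particular'' clause is then immediate by comparing dimensions: $\mathrm{ndeg}\;\phi = \mathrm{dim}\;\phi_{qp} = \mathrm{dim}\;(\phi_{qp})_L$, while $\mathrm{ndeg}\;\phi_L = \mathrm{dim}\;((\phi_{qp})_L)_{an}$, so the former strictly exceeds the latter exactly when $i_0((\phi_{qp})_L) > 0$, i.e.\ when $(\phi_{qp})_L$ is isotropic. The only place that requires a little care is the bookkeeping around similarity vs.\ isomorphism in step (2) and the interplay between extension of scalars and anisotropic reduction in step (1); once one notes that $(\phi_L)_{an}$ and $(\phi_{an})_L$ have the same value set, the rest is formal manipulation using Corollary \ref{anisotropicclassification} and the field property of $D(\pi)$ for quasi-Pfister $\pi$.
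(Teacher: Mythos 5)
Your proof is correct and follows exactly the route the paper has in mind: the paper dismisses the lemma as ``clear from the construction of the norm form (cf.\ Lemma \ref{normformlemma} and its proof),'' and your argument simply spells out that claim by verifying that $((\phi_{qp})_L)_{an}$ satisfies the two characterizing properties of $(\phi_L)_{qp}$, using the explicit description $D(\phi_{qp}) = F^p(a_2/a_1,\ldots,a_n/a_1)$ from the proof of Lemma \ref{normformlemma} together with the facts that $D(\phi_L) = L^p\cdot D(\phi)$ and that $D(\pi)$ is a field for quasi-Pfister $\pi$. The ``in particular'' clause by dimension count is handled the same way. (Strictly one should also note the degenerate case where $\phi_{an}$ has dimension $1$, in which $\phi_{qp} \simeq \form{1}$ and everything is trivial, but that is immediate.)
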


As a corollary, we get the following useful statement.

\begin{corollary}[{\citep[Proposition 5.2]{Hoffmann2}}] \label{normdegcriterion} Let $\phi$ be an anisotropic quasilinear $p$-form over $F$, and let $L$ be a field extension of $F$. If $\phi_L$ is isotropic, then $\mathrm{ndeg}\;\phi_L < \mathrm{ndeg}\;\phi$.
\begin{proof} In view of Lemma \ref{normformext}, we just need to check that $(\phi_{qp})_L$ is isotropic whenever $\phi_L$ is. This is clear, since $\phi$, being anisotropic, is similar to a subform of $\phi_{qp}$ by Lemma \ref{normformlemma}. \end{proof} \end{corollary}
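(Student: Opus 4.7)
The plan is to reduce the claim, via Lemma \ref{normformext}, to the assertion that the norm form $\phi_{qp}$ becomes isotropic over $L$, and then exploit the relationship between $\phi$ and $\phi_{qp}$ provided by Lemma \ref{normformlemma}.

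First, I would invoke Lemma \ref{normformext}. That lemma tells us that $\mathrm{ndeg}\;\phi_L < \mathrm{ndeg}\;\phi$ is equivalent to $(\phi_{qp})_L$ being isotropic (since a strict drop in the dimension of the anisotropic part of a quasi-Pfister form over $L$ is precisely the condition of isotropy). So the problem reduces to showing: if $\phi_L$ is isotropic, then $(\phi_{qp})_L$ is isotropic.

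Next, I would use the fact that $\phi$ is anisotropic to ensure that $\phi_{an} \simeq \phi$, so by Lemma \ref{normformlemma}(1) there exists $a \in F^*$ such that $a\phi$ is (isomorphic to) a subform of $\phi_{qp}$. In particular, $(a\phi)_L \simeq a\phi_L$ embeds as a subform of $(\phi_{qp})_L$. If $\phi_L$ is isotropic, then so is $a\phi_L$, and an isotropic vector of $(a\phi)_L$ yields an isotropic vector of $(\phi_{qp})_L$ via this embedding. Hence $(\phi_{qp})_L$ is isotropic, which completes the argument.

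There is essentially no obstacle here: both key inputs (Lemma \ref{normformext} and Lemma \ref{normformlemma}) are already in place, and the corollary is a straightforward concatenation of them. The only small point to be careful about is the use of similarity rather than literal containment in Lemma \ref{normformlemma}(1), but this is handled trivially by rescaling, since isotropy is invariant under multiplication of the form by a nonzero scalar.
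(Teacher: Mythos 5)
Your proposal is correct and follows exactly the same route as the paper: reduce via Lemma \ref{normformext} to showing $(\phi_{qp})_L$ is isotropic, then conclude from Lemma \ref{normformlemma}(1) that $\phi$ is similar to a subform of $\phi_{qp}$. You merely spell out explicitly the rescaling step that the paper's proof leaves implicit under the phrase ``this is clear.''
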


Recall that if $K$ and $L$ are field extensions of $F$, then an $F$-place $K \rightharpoonup L$ is a local $F$-algebra homomorphism $R \rightarrow L$, where $R$ is a valuation subring of $K$ containing $F$. This notion will be used extensively in what follows. We refer to the appendix for some basic facts concerning places and the relevant notation. The following lemma describes the isotropy behaviour of quasilinear $p$-forms in the presence of a place.

\begin{lemma} \label{placeisotropy} Let $K$ and $L$ be field extensions of $F$, and let $\phi$ be a quasilinear $p$-form over $F$. Assume that there exists an $F$-place $K \rightharpoonup L$. Then
\begin{enumerate} \item[$\mathrm{(1)}$] $i_0(\phi_L) \geq i_0(\phi_K)$.
\item[$\mathrm{(2)}$] $\mathrm{ndeg}\;\phi_L \leq \mathrm{ndeg}\;\phi_K$. \end{enumerate}
In particular, if $K \sim_F L$, then $i_0(\phi_K) = i_0(\phi_L)$ and $\mathrm{ndeg}\;\phi_K = \mathrm{ndeg}\;\phi_L$.
\begin{proof} In view of Lemma \ref{normformext}, it will be sufficient to prove (1). We may assume that $\mathrm{dim}\;\phi >1$. Let $m = i_0(\phi_K)$. By Lemma \ref{excellence}, it is enough to show that every codimension $(m-1)$ subform of $\phi$ becomes isotropic over $L$. Let $\psi$ be any such subform. Then the scheme $X_\psi$ has a $K$-valued point, and we want to show that it has an $L$-valued point. Since $X_\psi$ is complete, this follows from Lemma \ref{placescomplete}. \end{proof} \end{lemma}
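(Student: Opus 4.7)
The plan is to reduce (2) to (1) via Lemma \ref{normformext} and then prove (1) by combining Lemma \ref{excellence} with the completeness of quasilinear $p$-hypersurfaces. For the reduction, Lemma \ref{normformext} gives, for any field extension $E/F$, the identity $(\phi_E)_{qp} \simeq ((\phi_{qp})_E)_{an}$, whence
\[
\mathrm{ndeg}\;\phi_E \;=\; \mathrm{dim}\;\phi_{qp} - i_0((\phi_{qp})_E).
\]
Applying (1) with the quasi-Pfister form $\phi_{qp}$ in place of $\phi$ yields $i_0((\phi_{qp})_L) \geq i_0((\phi_{qp})_K)$, and substituting in the displayed formula gives $\mathrm{ndeg}\;\phi_L \leq \mathrm{ndeg}\;\phi_K$. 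So it suffices to prove (1).

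For (1), the cases $i_0(\phi_K) \in \lbrace 0, \mathrm{dim}\;\phi \rbrace$ are immediate (in the latter, $\phi$ must be the zero form, whose properties are invariant under scalar extension), so set $m \coloneqq i_0(\phi_K)$ and assume $1 \leq m < \mathrm{dim}\;\phi$. By Lemma \ref{excellence}, there is a subform $\psi \subset \phi$ defined over $F$ with $(\phi_L)_{an} \simeq \psi_L$; since $i_0(\phi_L) = \mathrm{dim}\;\phi - \mathrm{dim}\;\psi$, the desired inequality $i_0(\phi_L) \geq m$ is equivalent to $\mathrm{dim}\;\psi \leq \mathrm{dim}\;\phi - m$. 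If instead $\mathrm{dim}\;\psi \geq \mathrm{dim}\;\phi - m + 1$, then any $(\mathrm{dim}\;\phi - m + 1)$-dimensional $F$-subspace of $V_\psi$ underlies an $F$-subform $\psi' \subset \phi$ of codimension $m-1$ for which $\psi'_L$ is anisotropic (as a subform of the anisotropic form $\psi_L$). It is therefore enough to show that every codimension-$(m-1)$ subform of $\phi$ becomes isotropic over $L$.

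Fix such a $\psi' \subset \phi$. The totally isotropic vectors in $\phi_K$ form a $K$-subspace of $V_{\phi_K}$ of dimension $m$, while $V_{\psi'} \otimes_F K$ has codimension $m-1$ in $V_{\phi_K}$, so these two subspaces must intersect in dimension at least $1$. Thus $\psi'_K$ is isotropic, and the projective quasilinear $p$-hypersurface $X_{\psi'}$ admits a $K$-rational point. Since $X_{\psi'}$ is complete, the given $F$-place $K \rightharpoonup L$ produces an $L$-rational point via Lemma \ref{placescomplete}, and $\psi'_L$ is indeed isotropic. This proves (1). The ``in particular'' clause then follows by applying the two inequalities in both directions of the symmetric equivalence $K \sim_F L$. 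The main subtle step is the use of Lemma \ref{excellence} to translate the inequality $i_0(\phi_L) \geq m$, which \emph{a priori} concerns a possibly $L$-irrational subspace of $V_{\phi_L}$, into an $F$-rational condition on codimension-$(m-1)$ subforms of $\phi$—this reformulation is what allows the place-lifting lemma to be brought to bear.
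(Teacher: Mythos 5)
Your proof is correct and follows essentially the same approach as the paper: reduce (2) to (1) via Lemma~\ref{normformext}, then reformulate the inequality in (1) (via Lemma~\ref{excellence}) as the statement that every codimension-$(m-1)$ subform of $\phi$ becomes isotropic over $L$, and conclude by applying Lemma~\ref{placescomplete} to the complete scheme $X_{\psi'}$. The only differences are cosmetic: you spell out the dimension count behind the Lemma~\ref{excellence} reduction and the isotropy of $\psi'_K$, both of which the paper leaves implicit.
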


\subsection{Separable extensions.} Let $k$ be a field, and let $L$ be a field extension of $k$. Recall that the extension $k \subset L$ is called \emph{separable} if the ring $L \otimes_k \overline{k}$ is reduced (that is, has no nonzero nilpotent elements), where $\overline{k}$ is an algebraic closure of $k$. If $k$ has characteristic 0, then every extension of $k$ is separable. In positive characteristic, we have the following well-known result of S. MacLane (cf. \cite{MacLane}, or \citep[Proposition VIII.4.1]{Lang}).

\begin{theorem} \label{MacLane} Let $k$ be a field of characteristic $p>0$, and let $L$ be a field extension of $k$. The following conditions are equivalent.
\begin{enumerate} \item[$\mathrm{(1)}$] The extension $k \subset L$ is separable.
\item[$\mathrm{(2)}$] $L$ is linearly disjoint from $k^{1/p} = k(\sqrt[p]{a}\;|\;a \in k)$ over $k$. \end{enumerate} \end{theorem}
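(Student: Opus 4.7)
The plan is to work with the tensor-product reformulation of separability---that $L/k$ is separable if and only if $L\otimes_k\overline{k}$ is reduced---together with the following preliminary equivalence: the ring $L\otimes_k k^{1/p}$ is reduced if and only if $L$ is linearly disjoint from $k^{1/p}$ over $k$. To establish this equivalence, I observe that any element $x=\sum_i \ell_i\otimes b_i\in L\otimes_k k^{1/p}$ satisfies
\begin{equation*}
x^p=\sum_i \ell_i^p\otimes b_i^p=\bigg(\sum_i \ell_i^p b_i^p\bigg)\otimes 1\in L\otimes 1,
\end{equation*}
since $b_i^p\in k$. Consequently, any element $x$ of the kernel of the canonical surjection $L\otimes_k k^{1/p}\twoheadrightarrow L\cdot k^{1/p}$ satisfies $x^p=0$ (the element $x^p$ lies in the image of $L$, which embeds into the field $L\cdot k^{1/p}$), so the kernel is a nilideal; it vanishes if and only if $L\otimes_k k^{1/p}$ is reduced.

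For the implication $(1)\Rightarrow(2)$, the inclusion $k^{1/p}\hookrightarrow\overline{k}$ yields, by the flatness of every $k$-module, an injection $L\otimes_k k^{1/p}\hookrightarrow L\otimes_k\overline{k}$. Reducedness of the target forces reducedness of the source, and the preliminary equivalence then supplies linear disjointness.

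The nontrivial direction $(2)\Rightarrow(1)$ is where the real work lies. I would first reduce to the case where $L/k$ is finitely generated, since any nilpotent of $L\otimes_k\overline{k}$ lies in $L_0\otimes_k\overline{k}$ for some finitely generated $L_0\subset L$, and linear disjointness from $k^{1/p}$ is inherited by subextensions. The crux is to extract the \emph{Frobenius-independence property}: if $\ell_1,\dots,\ell_m\in L$ are $k$-linearly independent, then $\ell_1^p,\dots,\ell_m^p$ remain so. Indeed, a relation $\sum_i a_i\ell_i^p=0$ with $a_i\in k$ yields $(\sum_i a_i^{1/p}\ell_i)^p=0$ in $L\cdot k^{1/p}$, hence $\sum_i a_i^{1/p}\ell_i=0$, forcing each $a_i^{1/p}=0$ by linear disjointness. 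One then invokes the classical theorem (cf.\ Proposition VIII.4.1 in Lang's \emph{Algebra}) that, for finitely generated $L/k$, this Frobenius-independence property is equivalent to the existence of a separating transcendence basis $t_1,\dots,t_n$ of $L/k$. Once such a basis is in hand, $L$ is finite separable over $k(t_1,\dots,t_n)$, so $L\cong k(t)[X]/(f)$ for some separable $f\in k(t)[X]$, and the identification $L\otimes_k\overline{k}\cong(k(t)\otimes_k\overline{k})[X]/(f)$ exhibits $L\otimes_k\overline{k}$ as étale over the domain $k(t)\otimes_k\overline{k}$ (a localization of $\overline{k}[t_1,\dots,t_n]$), hence as a reduced ring.

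The main obstacle is the extraction of a separating transcendence basis from Frobenius independence; this is the combinatorial core of MacLane's argument, and I would quote it from the literature rather than reproduce the inductive argument on transcendence degree.
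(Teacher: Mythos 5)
The paper does not prove this theorem; it is cited as a classical result of MacLane, with a pointer to Lang's \emph{Algebra} (Proposition VIII.4.1). Your proof is correct and follows the standard textbook route: the preliminary identification of linear disjointness with reducedness of $L\otimes_k k^{1/p}$ via the Frobenius-power computation is the right key observation, the flatness argument for $(1)\Rightarrow(2)$ is clean, and for $(2)\Rightarrow(1)$ you correctly reduce to the finitely generated case, extract Frobenius-independence from linear disjointness, and then invoke the existence of a separating transcendence basis (the one piece you defer to the literature, appropriately, since that inductive lemma is the actual combinatorial content of MacLane's theorem). The final step exhibiting $L\otimes_k\overline{k}$ as étale over the domain $k(t)\otimes_k\overline{k}$ is correct, though it is slightly quicker to simply embed $(k(t)\otimes_k\overline{k})[X]/(f)$ into $K[X]/(f)$ for $K$ the fraction field, where separability of $f$ gives reducedness directly.
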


This has the following consequence for quasilinear $p$-forms.

\begin{lemma}[{\citep[Proposition 5.3]{Hoffmann2}}] \label{sepext} Let $\phi$ be a quasilinear $p$-form over $F$, and let $F \subset L$ be a separable extension.
\begin{enumerate} \item[$\mathrm{(1)}$] If $\phi$ is anisotropic, then so is $\phi_L$.
\item[$\mathrm{(2)}$] $\mathrm{ndeg}\;\phi_L = \mathrm{ndeg}\;\phi$. \end{enumerate}
\begin{proof} By Lemma \ref{normformext}, it suffices to prove (1). By Theorem \ref{MacLane}, $L$ is linearly disjoint from $F^{1/p}$ over $F$. It follows that $L^p$ is linearly disjoint from $F$ over $F^p$. In other words, anisotropic quasilinear $p$-forms over $F$ remain anisotropic over $L$. \end{proof} \end{lemma}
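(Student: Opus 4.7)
The plan is to first observe that part (2) reduces to part (1) via Lemma \ref{normformext}. Indeed, by that lemma, $\mathrm{ndeg}\;\phi_L$ equals $\mathrm{dim}\;\phi_{qp}$ precisely when $(\phi_{qp})_L$ is anisotropic, and $\phi_{qp}$ is already an anisotropic quasi-Pfister form over $F$. So everything comes down to showing that anisotropy of quasilinear $p$-forms is preserved under separable extensions.

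For part (1), I would first translate anisotropy into a statement about linear independence. By Lemma \ref{valuesdimension} (combined with Lemma \ref{constructionpforms}), a form $\form{a_1,\dots,a_n}$ over $F$ is anisotropic if and only if $a_1,\dots,a_n \in F$ are linearly independent over $F^p$; the same criterion applies over $L$ with $L^p$ in place of $F^p$. Thus the task is to prove the following purely field-theoretic statement: if $a_1,\dots,a_n \in F$ are $F^p$-linearly independent, then they are $L^p$-linearly independent.

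To prove this I would invoke Theorem \ref{MacLane}: separability of $F \subset L$ gives that $L$ and $F^{1/p}$ are linearly disjoint over $F$, inside a common overfield (for example $L \cdot F^{1/p}$, which is a field since $F^{1/p}/F$ is purely inseparable algebraic). Given $F^p$-linearly independent $a_1,\dots,a_n \in F$, their $p$-th roots $a_1^{1/p},\dots,a_n^{1/p}$ are $F$-linearly independent (raise any $F$-linear relation to the $p$-th power to get an $F^p$-linear relation on the $a_i$). By linear disjointness, $a_1^{1/p},\dots,a_n^{1/p}$ remain $L$-linearly independent. Now suppose $\sum_i \lambda_i^p a_i = 0$ for some $\lambda_i \in L$; taking $p$-th roots inside $L \cdot F^{1/p}$ this becomes $\bigl(\sum_i \lambda_i a_i^{1/p}\bigr)^p = 0$, hence $\sum_i \lambda_i a_i^{1/p} = 0$, and so all $\lambda_i$ vanish. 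This gives exactly the required $L^p$-linear independence, i.e.\ $L^p$ is linearly disjoint from $F$ over $F^p$.

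The only delicate point is the translation between linear disjointness of $L$ from $F^{1/p}$ over $F$ and linear disjointness of $L^p$ from $F$ over $F^p$; everything else is a routine unpacking of definitions. There is no genuine obstacle here, but one does need to be careful that the ambient field in which both statements are interpreted makes sense, which is why I would explicitly work inside the compositum $L \cdot F^{1/p}$.
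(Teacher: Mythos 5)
Your proof is correct and takes exactly the same route as the paper: reduce part (2) to part (1) via Lemma \ref{normformext}, invoke MacLane's criterion (Theorem \ref{MacLane}) to get linear disjointness of $L$ from $F^{1/p}$ over $F$, and deduce linear disjointness of $L^p$ from $F$ over $F^p$, which is exactly preservation of anisotropy. The paper simply asserts the last implication without proof, whereas you spell out the argument by working inside $L \cdot F^{1/p}$ and taking $p$-th roots, which is a correct and useful expansion of what the paper leaves to the reader.
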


\subsection{Purely inseparable extensions of degree $p$.} Any extension of fields may be presented as a separable extension followed by a purely inseparable algebraic extension. In view of Lemma \ref{sepext}, in order to study the isotropy behaviour of quasilinear $p$-forms over extensions of the base field, we are essentially reduced to considering finite purely inseparable extensions. Let $\phi$ be a quasilinear $p$-form over $F$, and let $a \in F \setminus F^p$. Recall that we write $F_a$ for the field $F(\sqrt[p]{a})$. By Lemma \ref{excellence}, there is a subform $\psi \subset \phi$ such that $(\phi_{F_a})_{an} \simeq \psi_{F_a}$. We have the following inclusion and equalities of $F^p$-vector spaces.
\begin{equation*} D(\phi) \subset D(\pfister{a} \otimes \phi) = D(\phi_{F_a}) = D(\psi_{F_a}) = D(\pfister{a} \otimes \psi). \end{equation*}
Moreover, $\pfister{a} \otimes \psi$ is anisotropic by Lemma \ref{valuesdimension} and the choice of $\psi$. Using Corollary \ref{anisotropicclassification}, we get the following result.

\begin{lemma} \label{pinsepisoandmult} In the above notation, $\phi_{an} \subset \pfister{a} \otimes \psi \simeq (\pfister{a} \otimes \phi)_{an}$. \end{lemma}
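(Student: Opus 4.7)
The statement almost writes itself once one has set up the chain of $D$-value identifications given immediately before it, so the main job is to package those identifications through Corollary \ref{anisotropicclassification}. My plan is as follows.

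First, I would record the key chain of inclusions and equalities of $F^p$-subspaces of $F$:
\begin{equation*}
D(\phi) \subset D(\pfister{a} \otimes \phi) = D(\phi_{F_a}) = D(\psi_{F_a}) = D(\pfister{a} \otimes \psi).
\end{equation*}
The inclusion on the left is obvious (take the first slot of $\pfister{a}$). The first equality comes from the fact that, since $a \notin F^p$, the set $\{1,a,\ldots,a^{p-1}\}$ is an $F^p$-basis of $F_a^p = F^p(a)$, so evaluating $\phi$ over $F_a$ produces exactly the $F^p$-span of the values of $\pfister{a} \otimes \phi$. The middle equality is the defining property of $\psi$ (together with Lemma \ref{excellence}, which guarantees $D(\phi_{F_a})$ coincides with $D(\psi_{F_a})$), and the final equality follows by the same computation as the first, applied now to $\psi$ instead of $\phi$.

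Second, I would verify that $\pfister{a} \otimes \psi$ is anisotropic. Since $\psi_{F_a} \simeq (\phi_{F_a})_{an}$ is anisotropic, Lemma \ref{valuesdimension} applied over $F_a$ gives $\dim_{F_a^p} D(\psi_{F_a}) = \dim \psi$. Because $[F_a^p : F^p] = p$, this forces $\dim_{F^p} D(\psi_{F_a}) = p \cdot \dim \psi = \dim(\pfister{a} \otimes \psi)$, and the equality $D(\psi_{F_a}) = D(\pfister{a} \otimes \psi)$ from the chain then gives anisotropy of $\pfister{a} \otimes \psi$ via Lemma \ref{valuesdimension} again.

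Finally, with anisotropy of $\pfister{a} \otimes \psi$ in hand, Corollary \ref{anisotropicclassification} does the rest: the inclusion $D(\phi) \subset D(\pfister{a} \otimes \psi)$ gives $\phi_{an} \subset \pfister{a} \otimes \psi$, and the equality $D(\pfister{a} \otimes \phi) = D(\pfister{a} \otimes \psi)$ gives $(\pfister{a} \otimes \phi)_{an} \simeq \pfister{a} \otimes \psi$.

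There is no real obstacle here; the only step that requires an actual calculation (rather than a citation of a previous result) is the elementary identification $D(\pfister{a} \otimes \phi) = D(\phi_{F_a})$, which rests on the fact that $\{1,a,\ldots,a^{p-1}\}$ is an $F^p$-basis of $F_a^p$. I expect the proof in the text to skip even this, since the excerpt already presents the chain of $D$-space identifications as self-evident from Lemma \ref{valuesdimension} and the choice of $\psi$.
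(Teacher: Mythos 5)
Your proposal is correct and follows exactly the paper's approach: the paper presents the chain of $D$-space identifications, notes anisotropy of $\pfister{a}\otimes\psi$ ``by Lemma \ref{valuesdimension} and the choice of $\psi$'', and then invokes Corollary \ref{anisotropicclassification} -- which is precisely your three steps. The only thing you add is the explicit dimension count over $F^p$ justifying anisotropy of $\pfister{a}\otimes\psi$, which the paper leaves to the reader; your account of it is accurate.
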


Now we can prove the following lemma.

\begin{lemma}[{cf. \citep[\S 5]{Hoffmann2}}] \label{pinsepisotropy} Let $\phi$ be a quasilinear $p$-form over $F$, and let $a \in F \setminus F^p$.
\begin{enumerate} \item[$\mathrm{(1)}$] $pi_0(\phi_{F_a}) = i_0(\pfister{a} \otimes \phi)$.
\item[$\mathrm{(2)}$] If $\phi$ is anisotropic, then $\mathrm{dim}(\phi_{F_a})_{an} \geq \frac{1}{p}\mathrm{dim}\;\phi$.
\item[$\mathrm{(3)}$] $\mathrm{ndeg}\;\phi_{F_a} = \begin{cases} \frac{1}{p}\mathrm{ndeg}\;\phi & \text{ if } a \in D(\phi_{qp}) \\
\mathrm{ndeg}\;\phi & \text{ if } a \notin D(\phi_{qp}). \end{cases}$
\item[$\mathrm{(4)}$] If $\phi$ is anisotropic and $\phi_{F_a}$ is isotropic, then $a \in D(\phi_{qp})$. \end{enumerate}
\begin{proof} Parts (1) and (2) follows immediately from Lemma \ref{pinsepisoandmult}. By Lemma \ref{normformext}, $\mathrm{ndeg}\;\phi_{F_a}$ is the dimension of the anisotropic part of $(\phi_{qp})_{F_a}$. By Lemma \ref{pinsepisoandmult}, this is in turn equal to $\frac{1}{p}$ times the dimension of the anisotropic part of $\pfister{a} \otimes \phi_{qp}$. By Lemma \ref{Pfisterisotropy}, the latter integer is equal to $\mathrm{dim}\;\phi_{qp}$ if $a \in D(\phi_{qp})$, and $p\mathrm{dim}\;\phi_{qp}$ otherwise. Since $\mathrm{ndeg}\;\phi = \mathrm{dim}\;\phi_{qp}$ by definition, this proves (3). In view of (3), (4) follows from Corollary \ref{normdegcriterion}. \end{proof} \end{lemma}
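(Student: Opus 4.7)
The four parts form a chain: (1) comes directly from Lemma \ref{pinsepisoandmult}, (2) is an arithmetic consequence, (3) follows by applying (1) to $\phi_{qp}$ and invoking Lemma \ref{Pfisterisotropy}, and (4) is immediate from (3) and Corollary \ref{normdegcriterion}. For (1), let $\psi \subset \phi$ be the subform provided by Lemma \ref{excellence} with $(\phi_{F_a})_{an} \simeq \psi_{F_a}$; then Lemma \ref{pinsepisoandmult} gives $(\pfister{a} \otimes \phi)_{an} \simeq \pfister{a} \otimes \psi$, so taking dimensions we get $\dim(\pfister{a} \otimes \phi)_{an} = p\dim\psi = p\dim(\phi_{F_a})_{an}$, and subtracting from $\dim(\pfister{a} \otimes \phi) = p\dim\phi = p\dim\phi_{F_a}$ yields $i_0(\pfister{a} \otimes \phi) = p \cdot i_0(\phi_{F_a})$. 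For (2), since $\pfister{a} \otimes \phi = \phi \oplus a\phi \oplus \cdots \oplus a^{p-1}\phi$ contains the anisotropic form $\phi$ as a subform, $i_0(\pfister{a} \otimes \phi) \leq (p-1)\dim\phi$, which combined with (1) gives $i_0(\phi_{F_a}) \leq \tfrac{p-1}{p}\dim\phi$, i.e.\ $\dim(\phi_{F_a})_{an} \geq \tfrac{1}{p}\dim\phi$.

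For (3), apply Lemma \ref{normformext} to obtain $\mathrm{ndeg}\;\phi_{F_a} = \dim((\phi_{qp})_{F_a})_{an}$, and then apply (1) with $\phi$ replaced by the (automatically anisotropic) form $\phi_{qp}$ to rewrite this as $\tfrac{1}{p}\dim(\pfister{a} \otimes \phi_{qp})_{an}$. Writing $\phi_{qp} = \pfister{a_1,\ldots,a_n}$, the form $\pfister{a} \otimes \phi_{qp} = \pfister{a, a_1, \ldots, a_n}$ is itself quasi-Pfister, so by Lemma \ref{Pfisterisotropy} (parts (2) and (3)) its anisotropic part has dimension $[F^p(a, a_1, \ldots, a_n) : F^p]$. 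Since $D(\phi_{qp}) = F^p(a_1, \ldots, a_n)$, this degree equals $[D(\phi_{qp}):F^p] = \mathrm{ndeg}\;\phi$ when $a \in D(\phi_{qp})$ and $p \cdot \mathrm{ndeg}\;\phi$ when $a \notin D(\phi_{qp})$, which gives the case distinction after dividing by $p$. Finally, (4) is a one-line consequence: if $\phi$ is anisotropic and $\phi_{F_a}$ is isotropic, then Corollary \ref{normdegcriterion} forces $\mathrm{ndeg}\;\phi_{F_a} < \mathrm{ndeg}\;\phi$, ruling out $a \notin D(\phi_{qp})$ in (3).

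The real content is packaged in Lemma \ref{pinsepisoandmult}; what remains here is bookkeeping. The only place requiring any care is the quasi-Pfister computation in (3), where one must correctly track $[F^p(a, a_1, \ldots, a_n) : F^p]$ according as $a$ lies in $D(\phi_{qp})$ or not. I do not anticipate a genuine obstacle.
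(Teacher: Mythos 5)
Your proposal is correct and follows the same line as the paper's proof: parts (1) and (2) are extracted from Lemma \ref{pinsepisoandmult} (you route (2) through (1) rather than directly through the inclusion $\phi_{an} \subset \pfister{a} \otimes \psi$, but the content is identical), part (3) applies Lemma \ref{normformext} and then Lemma \ref{pinsepisoandmult} to $\phi_{qp}$ before computing the degree $[F^p(a,a_1,\ldots,a_n):F^p]$ via Lemma \ref{Pfisterisotropy}, and part (4) is Corollary \ref{normdegcriterion} combined with (3). Essentially the paper's argument with the bookkeeping written out.
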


The following lemma will also be useful in what follows.

\begin{lemma} \label{pinsepisotropysubform} Let $\phi$ be a quasilinear $p$-form over $F$, let $a \in F \setminus F^p$, and let $m$ be a positive integer. If $i_0(\phi_{F_a}) \geq m$, then $\phi$ contains a subform $\psi$ of dimension $\leq pm$ such that $i_0(\psi_{F_a}) \geq m$.
\begin{proof} We proceed by induction on $m$. Let $w \in V_\phi \otimes_F F_a$ be a nonzero isotropic vector for $\phi_{F_a}$, and write $w$ in the form
\begin{equation*} w = v_0 \otimes 1 + v_1 \otimes \sqrt[p]{a} + ... + v_{p-1} \otimes (\sqrt[p]{a})^{p-1}, \end{equation*}
where the $v_i$ belong to $V_\phi$. Let $U \subset V$ be the subspace generated by the vectors $v_i$, and let $\sigma = \phi|_U$. Clearly $\mathrm{dim}\;\sigma \leq p$, and if $m=1$, we may take $\psi = \sigma$. If $m>1$, let $\tau \subset \phi$ be such that $\phi \simeq \sigma \oplus \tau$. By Lemma \ref{excellence}, there exists a subform $\eta \subset \sigma$ such that $(\sigma_{F_a})_{an} \simeq \eta_{F_a}$. Again, let $\rho \subset \sigma$ be such that $\sigma \simeq \eta \oplus \rho$, and put $\gamma = \eta \oplus \tau$. Then $\phi_{F_a} \sim \gamma_{F_a}$, so that $i_0(\gamma_{F_a}) = i_0(\phi_{F_a}) - \mathrm{dim}\;\rho \geq m - \mathrm{dim}\;\rho$. By the induction hypothesis, there exists a subform $\psi' \subset \gamma$ of dimension $\leq p(m-\mathrm{dim}\;\rho)$ such that $i_0(\psi'_{F_a}) \geq m - \mathrm{dim}\;\rho$. The form $\psi = (\psi' \oplus \sigma)_{an} \subset \phi$ has the properties we want. \end{proof} \end{lemma}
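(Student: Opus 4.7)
The plan is to induct on $m$. The key structural observation for both the base case and the inductive step is that any isotropic vector $w$ of $\phi_{F_a}$ already ``lives'' in a subform of $\phi$ of dimension at most $p$: expanding
\[
w = v_0 \otimes 1 + v_1 \otimes \sqrt[p]{a} + \cdots + v_{p-1} \otimes (\sqrt[p]{a})^{p-1}
\]
with $v_i \in V_\phi$, the restriction $\sigma$ of $\phi$ to the $F$-span $U$ of the $v_i$'s satisfies $\dim \sigma \leq p$, and $\sigma_{F_a}$ is isotropic because $w \in U \otimes_F F_a = V_\sigma \otimes_F F_a$. This immediately gives the base case $m = 1$ upon setting $\psi = \sigma$.

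For the inductive step, the strategy is to strip off the portion of $\sigma$ that is responsible for new isotropy over $F_a$ and then apply the inductive hypothesis to what remains. Decompose $\phi \simeq \sigma \oplus \tau$, and use Lemma \ref{excellence} to write $\sigma \simeq \eta \oplus \rho$ with $(\sigma_{F_a})_{an} \simeq \eta_{F_a}$. Let $\gamma = \eta \oplus \tau$, which I view as a subform of $\phi$. Since $D(\sigma_{F_a}) = D(\eta_{F_a})$, the values of $\rho_{F_a}$ are already captured by $\eta_{F_a}$, so $\phi_{F_a} \sim \gamma_{F_a}$; applying Proposition \ref{classification} to both sides then yields $i_0(\gamma_{F_a}) = i_0(\phi_{F_a}) - \dim \rho \geq m - \dim \rho$. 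Crucially, because $\sigma_{F_a}$ carries the nonzero isotropic vector $w$ while $\eta_{F_a}$ is anisotropic, we must have $\dim \rho \geq 1$. The inductive hypothesis applied to $\gamma$ then produces a subform $\psi' \subset \gamma$ of dimension at most $p(m - \dim \rho)$ with $i_0(\psi'_{F_a}) \geq m - \dim \rho$.

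To finish, I take $\psi$ to be the restriction of $\phi$ to the subspace $V_{\psi'} + V_\sigma \subset V_\phi$. The dimension bound is
\[
\dim \psi \leq \dim \psi' + \dim \sigma \leq p(m - \dim \rho) + p \leq pm,
\]
using $\dim \rho \geq 1$. For the isotropy count, I argue that the isotropic subspaces of $\psi'_{F_a}$ and $\sigma_{F_a}$ inside $V_\psi \otimes_F F_a$ intersect trivially: a common isotropic vector would lie in $(V_{\psi'} \cap V_\sigma) \otimes_F F_a$, and since $V_{\psi'} \subset V_\gamma = V_\eta \oplus V_\tau$ and $V_\sigma = V_\eta \oplus V_\rho$, such a vector lies in $V_\eta \otimes_F F_a$; but $\eta_{F_a}$ is anisotropic by construction. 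Hence $i_0(\psi_{F_a}) \geq (m - \dim \rho) + \dim \rho = m$, as required.

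The main obstacle is the dimension bound: a naive combination of $\psi'$ and $\sigma$ only gives $\dim \psi \leq pm + p(1 - \dim \rho)$, so everything hinges on the inequality $\dim \rho \geq 1$. This in turn rests on the fine distinction between ``$\sigma_{F_a}$ is isotropic'' (which is guaranteed by our choice of $\sigma$) and ``$\eta_{F_a}$ is anisotropic'' (which is guaranteed by Lemma \ref{excellence}), and is what makes the induction close with the sharp bound $pm$ rather than $p(m+1)$.
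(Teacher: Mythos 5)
Your proof follows the same inductive scheme as the paper's: extract the at-most-$p$-dimensional subform $\sigma$ carrying the isotropic vector $w$, split $\sigma \simeq \eta \oplus \rho$ via Lemma~\ref{excellence}, pass to $\gamma = \eta \oplus \tau$, apply the induction hypothesis, and recombine with $\sigma$. The only differences are cosmetic — you define $\psi$ as the restriction $\phi|_{V_{\psi'}+V_\sigma}$ rather than as $(\psi'\oplus\sigma)_{an}$ (these agree when $\phi$ is anisotropic), and you make explicit two points the paper leaves implicit, namely that $\dim\rho\geq 1$ (needed both for the induction to apply and for the final bound $pm$ to come out) and that the isotropic subspaces of $\psi'_{F_a}$ and $\sigma_{F_a}$ meet only in $V_\eta\otimes F_a$, where $\eta_{F_a}$ is anisotropic.
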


In some special cases, we can say more.

\begin{lemma} \label{weirdlemma} Let $\phi$ be an anisotropic quasilinear $p$-form over $F$, let $a \in F \setminus F^p$, and let $m$ be a positive integer. If $(p^2 - p - 1)i_0(\phi_{F_a}) - (p^2 - 2p)\mathrm{dim}\;\phi \geq m$, then there exists a form $\tau$ of dimension $m$ over $F$ such that $\pfister{a} \otimes \tau \subset \phi$.
\begin{proof} By Lemma \ref{excellence}, there exists a subform $\psi \subset \phi$ such that $(\phi_{F_a})_{an} \simeq \psi_{F_a}$. For all $i \in [1,p-1]$, we define $F^p$-vector spaces $V_i$ by setting
\begin{equation*} V_i = \lbrace b \in D(\psi)\;|\;a^ib \in D(\phi) \rbrace = D(\psi) \cap a^{p-i}D(\phi) \subset D(\psi). \end{equation*}
By Lemma \ref{constructionpforms}, there exists a unique (up to isomorphism) subform $\tau \subset \psi$ satisfying $D(\tau) = \bigcap_{i=1}^{p-1}V_i$. By Lemma \ref{pinsepisotropy} (1), $\pfister{a} \otimes \tau$ is anisotropic, and since $a^i D(\tau) \subset D(\phi)$ for all $i$, we have $\pfister{a} \otimes \tau \subset \phi$ by Corollary \ref{anisotropicclassification}. It now remains to show that $\mathrm{dim}\;\tau \geq n$. By Corollary \ref{indexdirectsum}, we have
\begin{equation*} \mathrm{dim}_{F^p}V_i = \mathrm{dim}_{F^p}(D(\psi) \cap a^{p-i}D(\phi)) = i_0(\psi \oplus a^{p-i} \phi) \end{equation*}
for all $i$. Since each $\psi \oplus a^{p-i}\phi$ is a subform of codimension $(p-2)\mathrm{dim}\;\phi + i_0(\phi_{F_a})$ in $\pfister{a} \otimes \phi$, we therefore have (using Lemma \ref{pinsepisotropy} (1))
\begin{align*} \mathrm{dim}_{F^p}V_i = i_0(\psi \oplus a^{p-i} \phi) & \geq  i_0(\pfister{a} \otimes \phi) - (p-2)\mathrm{dim}\;\phi - i_0(\phi_{F_a})\\
&= pi_0(\phi_{F_a}) - (p-2)\mathrm{dim}\;\phi - i_0(\phi_{F_a})\\
&= (p-1)i_0(\phi_{F_a}) - (p-2)\mathrm{dim}\;\phi. \end{align*}
Finally, we have
\begin{align*} \mathrm{dim}\;\tau = \mathrm{dim}_{F^p}\bigcap_{i=1}^{p-1}V_i & \geq  \sum_{i=1}^{p-1}\mathrm{dim}_{F^p}V_i - (p-2)\mathrm{dim}\;\psi\\
& \geq (p-1)((p-1)i_0(\phi_{F_a}) - (p-2)\mathrm{dim}\;\phi) - (p-2)(\mathrm{dim}\;\phi - i_0(\phi_{F_a}))\\
&= (p^2 - p - 1)i_0(\phi_{F_a}) - (p^2 - 2p)\mathrm{dim}\;\phi. \end{align*}
Since the latter integer is $\geq n$ by assumption, the lemma is proved. \end{proof} \end{lemma}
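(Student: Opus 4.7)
My plan is to construct $\tau$ by exhibiting an $F^p$-subspace $W \subset D(\phi)$ with $\dim_{F^p} W \geq m$ satisfying $a^i W \subset D(\phi)$ for all $0 \leq i \leq p-1$. Given such $W$, Lemma \ref{constructionpforms} will yield an anisotropic form $\tau$ with $D(\tau) = W$, and then $D(\pfister{a} \otimes \tau) = \sum_{i=0}^{p-1} a^i W$ will sit inside $D(\phi)$, so Corollary \ref{anisotropicclassification} will give $\pfister{a} \otimes \tau \subset \phi$, provided $\pfister{a} \otimes \tau$ is itself anisotropic. Anisotropy of $\pfister{a} \otimes \tau$ will in turn follow from Lemma \ref{pinsepisotropy} (1) once I know $\tau_{F_a}$ is anisotropic, which I arrange by forcing $W$ into the value set of the subform $\psi \subset \phi$ provided by Lemma \ref{excellence}.

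Concretely, I would first apply Lemma \ref{excellence} to pick $\psi \subset \phi$ with $\psi_{F_a} \simeq (\phi_{F_a})_{an}$, so that $\dim \psi = \dim\phi - i_0(\phi_{F_a})$ and $\psi_{F_a}$ is anisotropic. For $i \in [1,p-1]$, define
\[ V_i = D(\psi) \cap a^{p-i} D(\phi). \]
Since $a^p \in F^p$ and $D(\phi)$ is an $F^p$-subspace, one has $a^{p-i} D(\phi) = a^{-i} D(\phi)$, so $b \in V_i$ iff $b \in D(\psi)$ and $a^i b \in D(\phi)$. Setting $W = \bigcap_{i=1}^{p-1} V_i$ then produces an $F^p$-subspace of $D(\psi) \subset D(\phi)$ with $a^i W \subset D(\phi)$ for each $i \in [0,p-1]$. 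Taking $\tau$ to be the corresponding anisotropic form, $D(\tau) \subset D(\psi)$ forces $\tau \subset \psi$ by Corollary \ref{anisotropicclassification}, whence $\tau_{F_a} \subset \psi_{F_a}$ is anisotropic and Lemma \ref{pinsepisotropy} (1) ensures $\pfister{a} \otimes \tau$ is anisotropic.

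The heart of the argument will be the dimension count. Lemma \ref{indexdirectsum} converts each $\dim_{F^p} V_i$ into the defect index $i_0(\psi \oplus a^{p-i}\phi)$. Viewing $\psi \oplus a^{p-i}\phi$ as a subform of $\pfister{a} \otimes \phi$, its codimension is $(p-2)\dim\phi + i_0(\phi_{F_a})$; combined with $i_0(\pfister{a}\otimes\phi) = p \cdot i_0(\phi_{F_a})$ from Lemma \ref{pinsepisotropy} (1) and the elementary fact that the defect index of a subform decreases by at most its codimension, this will yield
\[ \dim_{F^p} V_i \geq (p-1) i_0(\phi_{F_a}) - (p-2)\dim\phi. \]
The standard bound $\dim \bigcap_{i=1}^{p-1} V_i \geq \sum_i \dim V_i - (p-2)\dim_{F^p} D(\psi)$ applied in the ambient space $D(\psi)$, which has $F^p$-dimension $\dim \phi - i_0(\phi_{F_a})$, will then collapse after a direct simplification to $(p^2 - p - 1)i_0(\phi_{F_a}) - (p^2 - 2p) \dim \phi$, which is $\geq m$ by hypothesis.

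The main obstacle I anticipate is purely bookkeeping: the subspace-intersection bound for $\bigcap V_i$ must be applied inside $D(\psi)$ rather than the larger $D(\phi)$ (otherwise the final arithmetic comes up short), and the codimension of $\psi \oplus a^{p-i}\phi$ in $\pfister{a}\otimes\phi$ must correctly combine the $(p-2)$ missing intermediate summands with the defect $\dim\phi - \dim\psi = i_0(\phi_{F_a})$ coming from replacing the first summand $\phi$ by $\psi$. Once these ingredients are aligned, the coefficients collapse exactly into the stated inequality.
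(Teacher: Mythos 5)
Your proposal reproduces the paper's argument essentially step for step: the same choice of $\psi$ via Lemma~\ref{excellence}, the same subspaces $V_i = D(\psi) \cap a^{p-i}D(\phi)$, the same identification of $\dim_{F^p}V_i$ with $i_0(\psi \oplus a^{p-i}\phi)$ via Lemma~\ref{indexdirectsum}, the same codimension count inside $\pfister{a}\otimes\phi$, and the same intersection bound carried out inside $D(\psi)$. Your added remarks (spelling out why $\tau_{F_a}\subset\psi_{F_a}$ forces anisotropy of $\pfister{a}\otimes\tau$, and flagging that the intersection bound must be taken in $D(\psi)$ rather than $D(\phi)$) are exactly the implicit points in the paper's version, so this is the same proof with slightly more explanation.
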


For the prime 2, we reach the following conclusion.

\begin{corollary}[{\citep[Proposition 7.18]{Hoffmann2}}] \label{isotropyquadratic} Assume that $p=2$. Let $\phi$ be an anisotropic quasilinear quadratic form over $F$, let $a \in F \setminus F^2$, and let $m$ be a positive integer. Then $i_0(\phi_{F_a}) \geq m$ if and only if there exists a form $\tau$ of dimension $m$ over $F$ such that $\pfister{a} \otimes \tau \subset \phi$.
\begin{proof} The implication $\Leftarrow$ is clear, while the converse follows from Lemma \ref{weirdlemma}. \end{proof} \end{corollary}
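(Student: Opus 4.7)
The plan is to handle the two implications separately, with the bulk of the content coming from Lemma \ref{weirdlemma}.

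For the easy direction $\Leftarrow$, I would first note that $(\pfister{a})_{F_a} = \form{1,a}_{F_a}$ is isotropic: since $a = (\sqrt{a})^2 \in F_a^2$, the vector $(\sqrt{a},1)$ satisfies $(\sqrt{a})^2 + a \cdot 1^2 = 2a = 0$ in characteristic $p=2$. Now if $e_1, e_2$ is the standard basis of $V_{\pfister{a}}$ and $v_1,\dots,v_m$ is a basis of $V_\tau$, then the vectors $(\sqrt{a}\,e_1 + e_2) \otimes v_j$ for $j = 1,\dots,m$ are linearly independent and isotropic in $(\pfister{a} \otimes \tau)_{F_a}$, so $i_0((\pfister{a} \otimes \tau)_{F_a}) \geq m$. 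Since $\pfister{a} \otimes \tau \subset \phi$, any $F_a$-subspace of isotropic vectors for $(\pfister{a} \otimes \tau)_{F_a}$ embeds as such in $\phi_{F_a}$, giving $i_0(\phi_{F_a}) \geq m$.

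For the converse $\Rightarrow$, the key observation is that the bound of Lemma \ref{weirdlemma} collapses dramatically when $p=2$: the expression $(p^2 - p - 1)i_0(\phi_{F_a}) - (p^2 - 2p)\mathrm{dim}\;\phi$ simplifies to just $i_0(\phi_{F_a})$. Consequently, the hypothesis $i_0(\phi_{F_a}) \geq m$ is precisely the hypothesis needed to invoke Lemma \ref{weirdlemma}, which then produces the desired form $\tau$ of dimension $m$ with $\pfister{a} \otimes \tau \subset \phi$.

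There is essentially no genuine obstacle here: all of the technical work has already been performed in Lemma \ref{weirdlemma}, and the role of this corollary is to record the pleasant fact that for $p=2$ the inequality degenerates to an optimal if-and-only-if statement. The only minor care needed is in the $\Leftarrow$ direction, where one should explicitly verify that the tensor construction produces $m$ linearly independent isotropic vectors (rather than just one).
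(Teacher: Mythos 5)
Your proposal is correct and follows exactly the route the paper intends: the forward implication is the direct specialisation of Lemma~\ref{weirdlemma} to $p=2$ (where the coefficient $p^2-p-1$ becomes $1$ and $p^2-2p$ vanishes), and the reverse implication is the elementary observation that $\pfister{a}_{F_a}$ is isotropic, so tensoring the isotropic vector $\sqrt{a}\,e_1+e_2$ with a basis of $V_\tau$ produces $m$ independent isotropic vectors inside $(\pfister{a}\otimes\tau)_{F_a}\subset\phi_{F_a}$. The paper's one-line proof compresses exactly this; you have merely unpacked both directions carefully, and the computation $(\phi\otimes\psi)(u\otimes w)=\phi(u)\psi(w)$ together with the additivity of quasilinear forms (which makes the isotropic vectors a subspace) closes the argument as you describe.
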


\begin{remark} The implication $\Rightarrow$ in the above corollary is generally false for $p>2$. \end{remark}

\section{Function fields of quasilinear $p$-hypersurfaces and the standard splitting pattern}

The standard splitting pattern of a quasilinear $p$-form was introduced in the articles \cite{Laghribi1} and \cite{Hoffmann2}. In \S 4.2, we recall its definition and basic properties. We begin with some generalities concerning function fields of quasilinear $p$-hypersurfaces.

\subsection{Function fields of quasilinear $p$-hypersurfaces.} Let $\phi$ be a quasilinear $p$-form of dimension $>1$ over $F$, and let $X_\phi$ be the associated quasilinear $p$-hypersurface. The following lemma is an easy calculation (cf. \citep[Lemma 7.1]{Hoffmann2}).

\begin{lemma} In the above notation, the scheme $X_\phi$ is integral if and only if $\mathrm{ndeg}\;\phi >1$. \end{lemma}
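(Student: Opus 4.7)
The plan is to identify $X_\phi$ with the projective hypersurface cut out by a Fermat-type polynomial and to read off its integrality from the dimension of the anisotropic part of $\phi$. By Remark \ref{completelysplitndeg}, $\mathrm{ndeg}\;\phi > 1$ is equivalent to $\mathrm{dim}\;\phi_{an} \geq 2$, so it suffices to prove the equivalence with this latter condition. Using Proposition \ref{classification}, fix coordinates $X_1,\ldots,X_n$ on $V_\phi$ (with $n=\mathrm{dim}\;\phi$) in which
\begin{equation*}
\phi = a_1 X_1^p + \cdots + a_m X_m^p,
\end{equation*}
where $m = \mathrm{dim}\;\phi_{an}$ and $a_1,\ldots,a_m$ form an $F^p$-basis of $D(\phi)$.

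Since $F[X_1,\ldots,X_n]$ is a UFD and $\phi$ is a non-unit, $X_\phi$ is integral iff $\phi$ is irreducible in this ring. If $m = 1$, then $\phi = a_1 X_1 \cdot X_1^{p-1}$ is visibly reducible, which handles one direction. For the converse, assume $m \geq 2$. The key observation is that, over the purely inseparable closure $F^{1/p}$, Frobenius collapses $\phi$ to $\ell^p$, where $\ell = a_1^{1/p} X_1 + \cdots + a_m^{1/p} X_m$ is a linear form and therefore a prime element in the UFD $F^{1/p}[X_1,\ldots,X_n]$. Consequently any factorization $\phi = gh$ in $F[X_1,\ldots,X_n]$ becomes $g = c\ell^d$, $h = c^{-1}\ell^{p-d}$ after base change to $F^{1/p}$, for some $c \in (F^{1/p})^*$ and some $0 \leq d \leq p$, with a proper factorization corresponding to $0 < d < p$.

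To rule out such a factorization, the plan is to compare two coefficients of $c\ell^d$: the coefficient of $X_1^d$ is $c a_1^{d/p}$, while the coefficient of $X_1^{d-1}X_2$ is $cd \cdot a_1^{(d-1)/p}a_2^{1/p}$. Both must lie in $F$, so their ratio $d(a_2/a_1)^{1/p}$ also lies in $F$. Since $0 < d < p$, the integer $d$ is invertible in $F$, forcing $(a_2/a_1)^{1/p} \in F$ and hence $a_2 \in a_1 F^p$. This contradicts the $F^p$-linear independence of $a_1$ and $a_2$, completing the proof. The main obstacle is this coefficient comparison, but it is an elementary calculation; the real conceptual content is the collapse $\phi = \ell^p$ over $F^{1/p}$, which is the essential rigidity peculiar to quasilinear $p$-forms.
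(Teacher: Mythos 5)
Your proof is correct. The paper itself does not supply an argument here — it calls the lemma ``an easy calculation'' and cites \citep[Lemma 7.1]{Hoffmann2} — so there is no internal proof to compare against, but your reasoning stands on its own. The reduction from integrality of $X_\phi$ to irreducibility of $\phi$ as a polynomial is standard for a projective hypersurface, and the translation $\mathrm{ndeg}\;\phi > 1 \Leftrightarrow \mathrm{dim}\;\phi_{an} \geq 2$ is exactly Remark \ref{completelysplitndeg}. The central idea — that $\phi = \ell^p$ over $F^{1/p}$ with $\ell$ prime, forcing any factor of $\phi$ over $F$ to be a scalar multiple of $\ell^d$ for some $0 \leq d \leq p$, and then using the coefficients of $X_1^d$ and $X_1^{d-1}X_2$ to deduce $(a_2/a_1)^{1/p} \in F$ when $0 < d < p$ — is a clean descent argument, and the coefficient computation $c a_1^{d/p}$ versus $cd\, a_1^{(d-1)/p} a_2^{1/p}$ (with $d$ invertible in $F$ since $0 < d < p$) correctly yields the contradiction with $F^p$-linear independence of $a_1, a_2$. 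One trivial point worth being explicit about: the vanishing variables $X_{m+1},\ldots,X_n$ supplied by Proposition \ref{classification} do not affect irreducibility, since a polynomial in $F[X_1,\ldots,X_m]$ is irreducible there iff it is irreducible in the larger ring $F[X_1,\ldots,X_n]$. With that observation the argument is complete.
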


In other words, the scheme $X_\phi$ is a variety provided that $\phi$ is not completely split (cf. Remark \ref{completelysplitndeg}). If $X_\phi$ is a variety, then we will denote its function field by $F(\phi)$. Clearly this field is invariant under multiplying $\phi$ by a scalar. Given quasilinear $p$-forms $\phi_1,...,\phi_n$ of dimension $>1$ over $F$, we will write $F(\phi_1 \times ... \times \phi_n)$ for the function field of the scheme $X_{\phi_1} \times ... \times X_{\phi_n}$ whenever it is integral. Furthermore, we will sometimes simplify the notation where it is appropriate. For example, if $\phi_1 = ... = \phi_n = \phi$, we will simply write $F(\phi^{\times n})$ instead of $F(\phi_1 \times ... \times \phi_n)$. Finally, if $L$ is a field extension of $F$, then we will typically write $L(\phi)$ instead of $L(\phi_L)$ whenever the latter is defined.

\begin{remarks} \label{ffremarks} Let $\phi$ be a quasilinear $p$-form over $F$. Assume that $\phi$ is not completely split.
\begin{enumerate} \item[$\mathrm{(1)}$] The field $F(\phi)$ can be written as a degree $p$ purely inseparable extension of a purely transcendental extension of $F$.
\item[$\mathrm{(2)}$] It follows from Proposition \ref{classification} that the varieties $X_\phi$ and $X_{\phi_{an}}$ are stably birational. In particular, we have $F(\phi) \sim_F F(\phi_{an})$ (cf. Example \ref{stabbireq}).
\item[$\mathrm{(3)}$] If $\psi$ is another quasilinear $p$-form of dimension $>1$ over $F$, then $\psi_{F(\phi)}$ is isotropic if and only if there exists a rational map $X_\phi \dashrightarrow X_\psi$. In particular, $\phi_{F(\phi)}$ is isotropic. \end{enumerate} \end{remarks}

We will be interested in the behaviour of quasilinear $p$-forms over fields of the above kind. In view of Remark \ref{ffremarks} (1), the following result follows from Lemmas \ref{sepext} and \ref{pinsepisotropy}.

\begin{lemma} \label{ffisotropy} Let $\phi$ and $\psi$ be quasilinear $p$-forms over $F$. Assume that $\psi$ is not completely split.
\begin{enumerate} \item[$\mathrm{(1)}$] $\mathrm{ndeg}\;\phi_{F(\psi)} \geq \frac{1}{p} \mathrm{ndeg}\;\phi$.
\item[$\mathrm{(2)}$] If $\phi$ is anisotropic, then $\mathrm{dim}\;(\phi_{F(\psi)})_{an} \geq \frac{1}{p}\mathrm{dim}\;\phi$, and equality holds in (1) if $\phi_{F(\psi)}$ is isotropic. \end{enumerate} \end{lemma}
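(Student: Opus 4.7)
The plan is to combine Lemmas \ref{sepext} and \ref{pinsepisotropy} via the structural description of $F(\psi)$ recorded in Remark \ref{ffremarks} (1). Since $\psi$ is not completely split, $X_\psi$ is a variety, and its function field admits a presentation $F(\psi) = K(\sqrt[p]{a})$, where $K$ is purely transcendental over $F$ and $a \in K \setminus K^p$: indeed, writing $\psi \simeq \form{a_1,\ldots,a_n}$ with (say) $a_n \neq 0$, dehomogenising the equation $\sum_i a_i X_i^p = 0$ on the chart $X_1 \neq 0$ and solving for the last affine coordinate exhibits such a presentation explicitly.

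Next I would descend along the separable layer $F \subset K$. Since purely transcendental extensions are separable, Lemma \ref{sepext} preserves anisotropy and norm degree from $F$ to $K$, and via Lemma \ref{normformext} this also gives $(\phi_K)_{qp} \simeq (\phi_{qp})_K$. With these identifications in hand I would then apply Lemma \ref{pinsepisotropy} to the form $\phi_K$ over $K$ and to the element $a \in K \setminus K^p$. Part (3) of that lemma forces $\mathrm{ndeg}\;\phi_{F(\psi)}$ to equal either $\mathrm{ndeg}\;\phi_K$ or $\tfrac{1}{p}\mathrm{ndeg}\;\phi_K$, in particular $\geq \tfrac{1}{p}\mathrm{ndeg}\;\phi$, which is (1). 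Assuming in addition that $\phi$ is anisotropic, so is $\phi_K$, and part (2) of the same lemma yields $\mathrm{dim}\;(\phi_{F(\psi)})_{an} \geq \tfrac{1}{p}\mathrm{dim}\;\phi_K = \tfrac{1}{p}\mathrm{dim}\;\phi$, the first assertion of (2).

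For the equality assertion in (2), suppose that $\phi_{F(\psi)}$ is isotropic. Then $(\phi_K)_{K(\sqrt[p]{a})}$ is isotropic while $\phi_K$ remains anisotropic, so part (4) of Lemma \ref{pinsepisotropy} forces $a \in D((\phi_K)_{qp}) = D((\phi_{qp})_K)$; feeding this back into part (3) of that lemma sharpens the alternative to $\mathrm{ndeg}\;\phi_{F(\psi)} = \tfrac{1}{p}\mathrm{ndeg}\;\phi_K = \tfrac{1}{p}\mathrm{ndeg}\;\phi$, as required. No real obstacle is anticipated: the argument is a direct assembly of the cited lemmas along the two-step factorisation $F \subset K \subset F(\psi)$ into a separable extension followed by a degree $p$ purely inseparable one. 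The only mild point to watch is the compatibility $(\phi_K)_{qp} \simeq (\phi_{qp})_K$ used in the equality step, which is precisely where the separability of $K/F$ (and hence Lemma \ref{sepext}) is essential.
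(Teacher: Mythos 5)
Your argument is exactly the one the paper intends: it unpacks the one-line justification in \S 4.1 by factoring $F(\psi)$ as a purely transcendental extension $K/F$ followed by a degree-$p$ purely inseparable extension (Remark \ref{ffremarks}~(1)), passing through the separable layer with Lemma \ref{sepext} (together with Lemma \ref{normformext} for the norm form compatibility), and then applying parts (2), (3) and (4) of Lemma \ref{pinsepisotropy} over $K$. The assembly is correct, including the use of part (4) to pin down the equality in (1) when $\phi_{F(\psi)}$ is isotropic.
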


To facilitate a systematic study of quasilinear $p$-forms over function fields of quasilinear $p$-hypersurfaces, it would be desirable to know whether the relation ``$\phi$ is isotropic over $F(\psi)$'' is transitive. We ask the following general questions.

\begin{questions} \label{questions} Let $\phi$ and $\psi$ be anisotropic quasilinear $p$-forms of dimension $>1$ over $F$, and let $L$ be any field extension of $F$ such that $\psi_L$ is isotropic.
\begin{enumerate} \item[$\mathrm{(1)}$] Does there exist an $F$-place $F(\psi) \rightharpoonup L$?
\item[$\mathrm{(2)}$] If $\phi_{F(\psi)}$ is isotropic, must $\phi_L$ be isotropic also? \end{enumerate} \end{questions}

In view of Lemma \ref{placeisotropy} (1), a positive answer to the first question implies a positive answer to the second. We expect a positive answer to both. The following lemma settles a useful special case.

\begin{lemma}[{cf. \citep[Proposition 7.17]{Hoffmann2}}] \label{subformtransitivity} Let $\phi$, $\psi$ and $\sigma$ be anisotropic quasilinear $p$-forms of dimension $>1$ over $F$. Assume that $\sigma \subset \psi$. Then
\begin{enumerate} \item[$\mathrm{(1)}$] There exists an $F$-place $F(\psi) \rightharpoonup F(\sigma)$.
\item[$\mathrm{(2)}$] If $\phi_{F(\psi)}$ is isotropic, then $\phi_{F(\sigma)}$ is isotropic. \end{enumerate}
\begin{proof} As remarked above, it is enough to prove the first statement. But the canonical closed embedding $X_\sigma \subset X_\psi$ is regular, and so $X_\psi$ is regular at the generic point of $X_\sigma$. The existence of an $F$-place $F(\psi) \rightharpoonup F(\sigma)$ therefore follows from Lemma \ref{placeregpoint}. \end{proof} \end{lemma}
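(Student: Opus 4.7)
The plan is to prove (1) directly by constructing the place from a local ring of $X_\psi$, and then to deduce (2) formally from (1) via Lemma \ref{placeisotropy}(1). The reduction is immediate: given an $F$-place $F(\psi) \rightharpoonup F(\sigma)$, Lemma \ref{placeisotropy}(1) yields $i_0(\phi_{F(\sigma)}) \geq i_0(\phi_{F(\psi)})$, so $\phi_{F(\sigma)}$ is isotropic whenever $\phi_{F(\psi)}$ is. Hence it suffices to concentrate on (1).

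For (1), the natural candidate is the place attached to the generic point $\eta$ of the closed subvariety $X_\sigma$ inside $X_\psi$ (both are integral, since $\sigma$ and $\psi$ are anisotropic of dimension $>1$, so their norm degrees exceed $1$). By construction $k(\eta) = F(\sigma)$, so it suffices to produce a valuation subring of $F(\psi)$ dominated by the local ring $\mathcal{O}_{X_\psi, \eta}$ whose residue field is $F(\sigma)$. The standard mechanism is Lemma \ref{placeregpoint} from the appendix, which produces such a place at any regular point of a variety. The task therefore reduces to showing that $X_\psi$ is regular at $\eta$.

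For the regularity check, write $\psi \simeq \sigma \oplus \tau$ for a complementary subform $\tau$ of dimension $r = \mathrm{dim}\;\psi - \mathrm{dim}\;\sigma$; in bases adapted to this decomposition, the linear subspace $\mathbb{P}(V_\sigma) \subset \mathbb{P}(V_\psi)$ is cut out by $r$ coordinates $Y_1, \ldots, Y_r$, and $X_\sigma = X_\psi \cap \{Y_1 = \cdots = Y_r = 0\}$. Hence the maximal ideal of $\mathcal{O}_{X_\psi, \eta}$ is generated by (the images of) $Y_1, \ldots, Y_r$, while its Krull dimension equals the codimension $r$ of $X_\sigma$ in $X_\psi$. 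A Noetherian local ring of dimension $r$ whose maximal ideal is generated by $r$ elements is regular, as required. The only potential pitfall is characteristic-$p$ pathology — quasilinear $p$-hypersurfaces typically fail to be smooth, and partial derivatives of $p$-th powers vanish identically — but regularity is needed only at the single non-closed point $\eta$, not at closed points, and the ideal of $X_\sigma$ in $X_\psi$ is visibly generated by the correct number of linear forms. Thus the dimension count suffices and no smoothness hypothesis is needed. This is the substance of the author's one-line remark that $X_\sigma \hookrightarrow X_\psi$ is a regular immersion; modulo the appendix lemmas on places, it is essentially the entire proof.
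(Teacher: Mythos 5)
Your proposal is correct and follows essentially the same route as the paper's proof: reduce (2) to (1) via Lemma \ref{placeisotropy}(1), then obtain the place from Lemma \ref{placeregpoint} after checking that $X_\psi$ is regular at the generic point of $X_\sigma$. Your unpacking of the regularity check — the ideal of $X_\sigma$ in $X_\psi$ at the generic point is generated by the $r$ linear forms cutting out $\mathbb{P}(V_\sigma)$, and $r$ equals the Krull dimension of the local ring, so it is regular — is exactly what the paper compresses into the observation that $X_\sigma \hookrightarrow X_\psi$ is a regular closed embedding.
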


Here is a useful consequence of this observation.

\begin{proposition}[{\citep[Lemma 7.12]{Hoffmann2}}] \label{ndegfunctoriality} Let $\phi$ and $\psi$ be anisotropic quasilinear $p$-forms of dimension $>1$ over $F$. If $\phi_{F(\psi)}$ is isotropic, then $\psi_{qp} \subset \phi_{qp}$. In particular, $\mathrm{ndeg}\;\psi \leq \mathrm{ndeg}\;\phi$.
\begin{proof} By Corollary \ref{anisotropicclassification}, we have to show that $D(\psi_{qp}) \subset D(\phi_{qp})$. Let $a \in D(\psi_{qp})$. If $a \in F^p$, then clearly $a \in D(\phi_{qp})$. Otherwise, the binary form $\tau = \form{1,a}$ is a subform of $\psi_{qp}$. By Lemma \ref{subformtransitivity}, $\phi_{F(\tau)}$ is isotropic. But $F(\tau) = F_a$, so $\phi_{F_a}$ is isotropic. Finally, Lemma \ref{pinsepisotropy} shows that $a \in D(\phi_{qp})$, as we wanted. \end{proof} \end{proposition}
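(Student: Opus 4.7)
The strategy is to pass, via Corollary \ref{anisotropicclassification}, from the containment $\psi_{qp} \subset \phi_{qp}$ of anisotropic forms to the inclusion $D(\psi_{qp}) \subset D(\phi_{qp})$ of their $F^p$-value spaces, and verify this pointwise. Pick $a \in D(\psi_{qp})$. If $a \in F^p$ then $a \in D(\form{1}) \subset D(\phi_{qp})$ is immediate, so the substantive case is $a \notin F^p$, which I assume henceforth.

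In this case the binary form $\tau := \form{1, a}$ is anisotropic (since $1$ and $a$ are $F^p$-linearly independent), and $D(\tau) = F^p + aF^p \subset D(\psi_{qp})$ forces $\tau \subset \psi_{qp}$ by Corollary \ref{anisotropicclassification}. A direct inspection of the $0$-dimensional scheme $X_\tau \subset \mathbb{P}^1$ cut out by $X^p + aY^p = 0$ shows that it consists of a single closed point with residue field $F(\sqrt[p]{-a})$, which coincides with $F_a$ (using that $-1$ admits a $p$-th root in any field of characteristic $p$); thus $F(\tau) = F_a$.

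The crux is then to show that $\phi_{F_a} = \phi_{F(\tau)}$ is isotropic. Once this is in hand, Lemma \ref{pinsepisotropy}(4) applied to the anisotropic form $\phi$ immediately yields $a \in D(\phi_{qp})$, completing the proof. The plan for the crux is to apply Lemma \ref{subformtransitivity}(2) to the subform inclusion $\tau \subset \psi_{qp}$, which would transport $\phi$'s isotropy from $F(\psi_{qp})$ down to $F(\tau)$. The given hypothesis that $\phi_{F(\psi)}$ is isotropic must therefore first be upgraded to isotropy of $\phi$ over $F(\psi_{qp})$, using the fact (Lemma \ref{normformlemma}(1)) that $\psi$ is similar to a subform of $\psi_{qp}$.

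The main obstacle is precisely this upgrading step. Lemma \ref{subformtransitivity}(1) applied to the (scaled) inclusion $\psi \subset \psi_{qp}$ gives a place $F(\psi_{qp}) \rightharpoonup F(\psi)$, but this goes in the ``wrong'' direction for lifting $\phi$'s isotropy from $F(\psi)$ to $F(\psi_{qp})$. Overcoming this gap requires finer information: one must exploit the quasi-Pfister nature of $\psi_{qp}$ as a ``norm form'' governing $\psi$'s behaviour, or alternatively try to construct a place $F(\psi) \rightharpoonup F_a$ directly (which exists iff $\psi_{F_a}$ is itself isotropic), in which case Lemma \ref{placeisotropy}(1) would deliver $\phi_{F_a}$-isotropy at once. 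This is where the main technical weight of the argument will reside.
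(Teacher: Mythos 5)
Your write-up correctly reproduces the paper's setup — reduce via Corollary \ref{anisotropicclassification} to $D(\psi_{qp}) \subset D(\phi_{qp})$, take $a \in D(\psi_{qp})\setminus F^p$, observe $\tau = \form{1,a}\subset\psi_{qp}$ and $F(\tau) = F_a$, and aim to get $\phi_{F_a}$ isotropic so that Lemma \ref{pinsepisotropy}(4) finishes. You also correctly notice the point that makes a literal application of Lemma \ref{subformtransitivity} uncomfortable: you only know $\tau \subset \psi_{qp}$, whereas the hypothesis in hand concerns $F(\psi)$, and the place $F(\psi_{qp}) \rightharpoonup F(\psi)$ from Lemma \ref{subformtransitivity}(1) points the wrong way. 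So your instinct is sound. But the proof is left incomplete, and both of your proposed remedies miss the fix.

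Your second suggested route — produce a place $F(\psi) \rightharpoonup F_a$, equivalently show $\psi_{F_a}$ isotropic — does not work for two reasons. First, the ``equivalently'' invokes a positive answer to Question \ref{questions}(1), which is only established for $p \le 3$ (Proposition \ref{23transitivity}), while the proposition must hold for all $p$. Second and more fundamentally, $\psi_{F_a}$ need \emph{not} be isotropic for an arbitrary $a \in D(\psi_{qp})\setminus F^p$: for instance, with $p=3$, $F = \mathbb{F}_3(a,b)$, $\psi = \form{1,a,b}$ and $c = ab \in D(\psi_{qp}) = F^3(a,b)$, one checks that $1,a,b$ remain $F_c^3$-linearly independent, so $\psi_{F_c}$ is anisotropic even though $c \in D(\psi_{qp})$.

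The actual repair is much simpler and only needs what is already in the text. Since $D(\phi_{qp})$ is a \emph{field} containing $F^p$ (Lemma \ref{Pfisterisotropy}(1)), it suffices to verify $a \in D(\phi_{qp})$ for $a$ ranging over a set of field generators of $D(\psi_{qp})$ over $F^p$. Writing $\psi \simeq \form{a_1,\dots,a_n}$, the construction in Lemma \ref{normformlemma} shows $D(\psi_{qp}) = F^p(\tfrac{a_2}{a_1},\dots,\tfrac{a_n}{a_1})$, so the $\tfrac{a_i}{a_1}$ are such generators. For each $i$ with $\tfrac{a_i}{a_1} \notin F^p$, the binary form $\form{1,\tfrac{a_i}{a_1}} = a_1^{-1}\form{a_1,a_i}$ is \emph{similar to the subform} $\form{a_1,a_i}$ of $\psi$ itself. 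Now Lemma \ref{subformtransitivity}(2) applies directly with $\sigma = \form{a_1,a_i} \subset \psi$: since $\phi_{F(\psi)}$ is isotropic, so is $\phi_{F(\sigma)}$, and $F(\sigma) = F(\form{1,a_i/a_1}) = F_{a_i/a_1}$, whence $\tfrac{a_i}{a_1} \in D(\phi_{qp})$ by Lemma \ref{pinsepisotropy}(4). This is the missing step, and it carries the technical weight you correctly anticipated; neither a ``norm-form governance'' argument nor a direct place $F(\psi)\rightharpoonup F_a$ is needed once one restricts attention to these generators.
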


In complete generality, Questions \ref{questions} (1) and (2) remain open whenever $p>3$. However, it was essentially shown in \cite{Scully} that one can settle both problems when $p=2$ or $p=3$ using a sequence of elementary arguments.

\begin{proposition} \label{23transitivity} If $p=2$ or $p=3$, Questions \ref{questions} (1) and (2) have positive answers.
\begin{proof} We remark again that it suffices to prove that Question \ref{questions} (1) has a positive answer. Now, by Lemma \ref{sepext} (1), there is a tower $F \subset M \subset M' \subset L$ of fields such that $M \subset M'$ is purely inseparable of degree $p$, $\psi_M$ is anisotropic, and $\psi_{M'}$ is isotropic. We have trivial $F$-places $F(\psi) \rightharpoonup M(\psi_M)$ and $M' \rightharpoonup L$. Since $F$-places can be composed (cf. the appendix), we reduce to the case where $F \subset L$ is purely inseparable of degree $p$. Under this assumption, Lemma \ref{pinsepisotropysubform} shows that there is a subform $\tau \subset \psi$ of dimension $\leq p$ such that $\tau_L$ is isotropic. By Lemma \ref{subformtransitivity} (1), there is an $F$-place $F(\psi) \rightharpoonup F(\tau)$. Again, since $F$-places can be composed, this further reduces the problem to the case where $\mathrm{dim}\;\psi \leq p$. Now, by Lemma \ref{placeregpoint}, it will suffice to show (under the assumptions $\mathrm{dim}\;\psi \leq p \leq 3$) that $X_\psi$ has a regular $L$-valued point. This was done in \citep[Proposition 4.8]{Scully}. \end{proof} \end{proposition}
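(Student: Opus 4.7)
The plan is to reduce first to Question \ref{questions} (1), observing that by Lemma \ref{placeisotropy} (1) any $F$-place $F(\psi) \rightharpoonup L$ automatically transfers isotropy of $\phi$ from $F(\psi)$ to $L$. From there, my approach will be a two-stage reduction: first simplify the extension $F \subset L$, then simplify the form $\psi$.

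For the first stage, I would decompose $F \subset L$ into its maximal separable subextension $F \subset M$ followed by a purely inseparable tower. By Lemma \ref{sepext}, the form $\psi_M$ is still anisotropic, so there is a subextension $M \subset M' \subset L$ with $[M':M] = p$ purely inseparable at which isotropy first occurs. The chain $F(\psi) \rightharpoonup M(\psi_M) \rightharpoonup M' \rightharpoonup L$ is then built from a trivial $F$-place at the first arrow coming from the separable base change, a nontrivial $F$-place across the degree-$p$ purely inseparable step in the middle, and a trivial $F$-place at the last arrow. Composing these reduces the problem to the case where $F \subset L$ is itself purely inseparable of degree exactly $p$.

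For the second stage, under this hypothesis Lemma \ref{pinsepisotropysubform} produces a subform $\tau \subset \psi$ of dimension at most $p$ with $\tau_L$ already isotropic, while Lemma \ref{subformtransitivity} (1) supplies an $F$-place $F(\psi) \rightharpoonup F(\tau)$. Composing places once more reduces the whole problem to constructing an $F$-place $F(\tau) \rightharpoonup L$ for an anisotropic form $\tau$ of dimension at most $p$ that becomes isotropic over the degree-$p$ purely inseparable extension $L$.

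The main obstacle, and the source of the restriction $p \in \{2,3\}$, is this last reduced problem. By Lemma \ref{placeregpoint}, it suffices to exhibit a regular $L$-valued point on the quasilinear $p$-hypersurface $X_\tau$, whose dimension is at most $p - 2$. For $p \in \{2,3\}$ this is a very low-dimensional question that can be attacked by direct case analysis, and I would appeal to \citep[Proposition 4.8]{Scully}, which handles exactly this situation for small quasilinear $p$-hypersurfaces. For $p \geq 5$ no comparable elementary tool is available, which explains why the present method cannot be pushed further and why Questions \ref{questions} (1) and (2) remain open beyond the prime $3$.
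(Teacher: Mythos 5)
Your proposal is correct and follows essentially the same approach as the paper: reduce to Question \ref{questions} (1) via Lemma \ref{placeisotropy} (1), then reduce to a purely inseparable degree-$p$ extension using Lemma \ref{sepext} and composition of trivial places, then cut $\psi$ down to a subform $\tau$ of dimension $\leq p$ via Lemmas \ref{pinsepisotropysubform} and \ref{subformtransitivity} (1), and finally invoke Lemma \ref{placeregpoint} together with \citep[Proposition 4.8]{Scully} for the low-dimensional case. The only point worth flagging is that your phrase ``a nontrivial $F$-place across the degree-$p$ purely inseparable step in the middle'' could be misread as asserting that arrow is already in hand; it is precisely what the remainder of the argument constructs, but you do state the reduction correctly immediately afterward, so the logic is sound.
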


Taking Lemma \ref{placeisotropy} (1) into account, we get the following corollary.

\begin{corollary} \label{23minimalityi1} Assume that $p=2$ or $p=3$. Let $\phi$ be an anisotropic $p$-form of dimension $>1$ over $F$, and let $L$ be a field extension of $F$ such that $\phi_L$ is isotropic. Then $i_0(\phi_L) \geq i_0(\phi_{F(\phi)})$. \end{corollary}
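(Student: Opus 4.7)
The plan is to chain two previously established results: Proposition \ref{23transitivity} (which answers Questions \ref{questions} affirmatively when $p \leq 3$) and Lemma \ref{placeisotropy} (1) (which converts an $F$-place into an inequality of defect indices).

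More precisely, since $\phi$ is anisotropic of dimension $>1$ and $\phi_L$ is isotropic, I take $\psi = \phi$ in Proposition \ref{23transitivity} (1). Because we are in the regime $p \in \{2,3\}$, that proposition produces an $F$-place $F(\phi) \rightharpoonup L$. Feeding this place into Lemma \ref{placeisotropy} (1), applied to the form $\phi$ itself with $K = F(\phi)$, immediately gives
\begin{equation*}
i_0(\phi_L) \geq i_0(\phi_{F(\phi)}),
\end{equation*}
which is exactly the desired inequality.

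There is no real obstacle here: all the work has been absorbed into Proposition \ref{23transitivity}, whose proof in turn reduces to the existence of regular $L$-valued points on low-dimensional quasilinear $p$-hypersurfaces (handled in \cite{Scully}). The present statement is just the natural extraction of a splitting-pattern consequence. The one thing worth being careful about is the hypothesis $\mathrm{dim}\;\phi > 1$ required to form $F(\phi)$ in the first place, but this is given; and the case $p > 3$ is explicitly excluded because that is precisely where Question \ref{questions} (1) remains open, so the argument cannot be pushed further without new input.
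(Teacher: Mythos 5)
Your proof is correct and is exactly the argument the paper intends: it applies Proposition \ref{23transitivity} (with $\psi = \phi$) to obtain an $F$-place $F(\phi) \rightharpoonup L$, and then feeds that place into Lemma \ref{placeisotropy} (1) with $K = F(\phi)$.
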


\subsection{The standard splitting pattern.} We can now introduce the standard splitting pattern, which is defined via a construction analogous to M. Knebusch's construction of the generic splitting tower of a nondegenerate quadratic form over a field of characteristic different from 2.

\begin{definition}[{cf. \citep[\S 7.5]{Hoffmann2}}] Let $\phi$ be a quasilinear $p$-form over $F$. Assume that $\phi$ is not completely split. Set $F_0 = F$, $\phi_0 = \phi_{an}$ and define inductively
\begin{itemize} \item $F_r = F_{r-1}(\phi_{r-1})$ (provided $\phi_{r-1}$ is not completely split).
\item $\phi_r = ((\phi_{r-1})_{F_r})_{an}$ (provided $F_r$ is defined). \end{itemize}
By Remark \ref{ffremarks} (3), we have $\mathrm{dim}\;\phi_r < \mathrm{dim}\;\phi_{r-1}$. The process is therefore finite, and stops at the first positive integer $h(\phi)$ such that $\phi_{h(\phi)}$ is completely split.
\begin{itemize} \item The integer $h(\phi)$ is called the \emph{height} of $\phi$.
\item The tower $F = F_0 \subset F_1 \subset ... \subset F_{h(\phi)}$ of fields is called the \emph{standard splitting tower of $\phi$}.
\item The integer $i_r(\phi) = i_0((\phi_{r-1})_{F_r})$ is the \emph{$r^{th}$ higher defect index of $\phi$}.
\item The decreasing sequence $\mathrm{sp}(\phi) = (\mathrm{dim}\;\phi_{an} = \mathrm{dim}\;\phi_0, \mathrm{dim}\;\phi_1,...,\mathrm{dim}\;\phi_{h(\phi)} = 1)$ of positive integers is called the \emph{standard splitting pattern of $\phi$} (note that the notation here differs slightly from \cite{Hoffmann2}).
\item We also introduce the sequence $\widetilde{\mathrm{sp}}(\phi) = (\mathrm{dim}\;\phi_1, \mathrm{dim}\;\phi_2,..., \mathrm{dim}\;\phi_{h(\phi)} = 1)$, which is just $\mathrm{sp}(\phi)$ with the first entry removed. \end{itemize} \end{definition}

\begin{remarks} \label{sspremarks} Let $\phi$ be a quasilinear $p$-form over $F$. Assume that $\phi$ is not completely split.
\begin{enumerate} \item[$\mathrm{(1)}$] Since everything is defined inductively, we have $i_r(\phi) = i_1(\phi_{r-1})$ and $\widetilde{\mathrm{sp}}(\phi) = \mathrm{sp}(\phi_1)$.
\item[$\mathrm{(2)}$] Let $F = F_0 \subset F_1 \subset ... \subset F_{h(\phi)}$ be the standard splitting tower of $\phi$. Then it follows from Remark \ref{ffremarks} (2) and the definition that $F_r \sim_F F(\phi^{\times r})$. These equivalences will be used repeatedly in what follows (sometimes implicitly).
\item[$\mathrm{(3)}$] Let $i_0(\phi) = j_0 < j_1 <...<j_t$ be the sequence of defect indices realised by $\phi$ over all possible field extensions of $F$. Contrary to the theory of nondegenerate quadratic forms over fields of characteristic different from 2, it can happen that $t > h(\phi)$. In particular, the formula $j_k = \sum_{r=0}^k i_r(\phi)$ does not hold in general (cf. \citep[Example 7.23]{Hoffmann2}). The difference may be attributed to the fact that the Witt decomposition theorem for nondegenerate quadratic forms over fields of characteristic different from 2 is more subtle than the analogue which we employ here (Proposition \ref{classification}). We still expect however that $j_1 = i_0(\phi) + i_1(\phi)$, as the decomposition theorem plays no role in this case (cf. Corollary \ref{23minimalityi1} for the case where $p \leq 3$). \end{enumerate} \end{remarks}

\begin{lemma}[{cf. \citep[Theorem 7.25]{Hoffmann2}}] \label{isotropytower} Let $\phi$ be a quasilinear $p$-form over $F$. Assume that $\phi$ is not completely split.
\begin{enumerate} \item[$\mathrm{(1)}$] $\mathrm{ndeg}\;\phi_r = \frac{1}{p}\mathrm{ndeg}\;\phi_{r-1}$.
\item[$\mathrm{(2)}$] $\mathrm{dim}\;\phi_r \geq \frac{1}{p}\mathrm{dim}\;\phi_{r-1}$.
\item[$\mathrm{(3)}$] $h(\phi) = \mathrm{log}_p(\mathrm{ndeg}\;\phi)$. \end{enumerate}
\begin{proof} Part (3) follows immediately from part (1). By the construction of the standard splitting tower, statements (1) and (2) follow from Lemma \ref{ffisotropy}. \end{proof} \end{lemma}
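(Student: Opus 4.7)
The plan is to apply Lemma \ref{ffisotropy} inductively along the sequence $\phi_0, \phi_1, \ldots, \phi_{h(\phi)}$, exploiting the fact that at every stage of the standard splitting tower we pass to the function field of the form itself. The key observation is that each $\phi_{r-1}$ is anisotropic by construction and becomes isotropic over $F_r = F_{r-1}(\phi_{r-1})$ by Remark \ref{ffremarks} (3); this places us precisely in the equality case of Lemma \ref{ffisotropy} (2) at every step.

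Granting this setup, (1) and (2) become immediate. For (2), I apply Lemma \ref{ffisotropy} (2) over the base field $F_{r-1}$ with $\phi_{r-1}$ playing the role of both $\phi$ and $\psi$: this yields $\mathrm{dim}\;\phi_r = \mathrm{dim}\;((\phi_{r-1})_{F_r})_{an} \geq \frac{1}{p}\mathrm{dim}\;\phi_{r-1}$. For (1), the same setup triggers the equality clause of Lemma \ref{ffisotropy} (2), giving $\mathrm{ndeg}\;(\phi_{r-1})_{F_r} = \frac{1}{p}\mathrm{ndeg}\;\phi_{r-1}$. The norm degree depends only on the $F^p$-subspace of represented values, so $\mathrm{ndeg}\;\phi_r = \mathrm{ndeg}\;(\phi_{r-1})_{F_r}$ (passing to the anisotropic part merely discards a $\form{0}$-summand, which does not affect the norm form); this establishes (1).

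For (3), iterating (1) produces $\mathrm{ndeg}\;\phi_r = p^{-r}\mathrm{ndeg}\;\phi_0 = p^{-r}\mathrm{ndeg}\;\phi$, where the second equality uses that $\phi_0 = \phi_{an}$ represents the same values as $\phi$. By Remark \ref{completelysplitndeg}, $\phi_r$ is completely split if and only if $\mathrm{ndeg}\;\phi_r = 1$, and $h(\phi)$ is by definition the least such $r$; hence $h(\phi) = \mathrm{log}_p(\mathrm{ndeg}\;\phi)$. I do not anticipate any substantive obstacle: the whole argument is essentially a bookkeeping exercise, and the only point worth pausing on is recognising that the generic passage to $F_r$ always lands in the equality case of Lemma \ref{ffisotropy} (2), so that the bound in part (1) of that lemma is achieved at every stage of the tower.
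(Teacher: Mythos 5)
Your proposal is correct and follows exactly the route the paper intends: the paper's proof is a one-line appeal to Lemma \ref{ffisotropy} at each step of the tower, and you have simply written out the bookkeeping that the paper leaves implicit, correctly identifying that the generic passage $F_{r-1} \subset F_{r-1}(\phi_{r-1})$ forces the equality clause for the norm degree and that passing to the anisotropic part leaves $\mathrm{ndeg}$ unchanged.
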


Together with Proposition \ref{ndegfunctoriality}, part (3) of Lemma \ref{isotropytower} gives the following result.

\begin{proposition} \label{heightfunctoriality} Let $\phi$ and $\psi$ be anisotropic quasilinear $p$-forms of dimension $>1$ over $F$. If $\phi_{F(\psi)}$ is isotropic, then $h(\psi) \leq h(\phi)$. \end{proposition}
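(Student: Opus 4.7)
The plan is to combine the two results flagged just before the proposition statement. By Lemma \ref{isotropytower}(3), the height of an anisotropic quasilinear $p$-form of dimension $>1$ is determined by a single numerical invariant, namely $h(\gamma) = \log_p(\mathrm{ndeg}\;\gamma)$ for any such form $\gamma$. So proving the inequality $h(\psi) \leq h(\phi)$ is equivalent to proving $\mathrm{ndeg}\;\psi \leq \mathrm{ndeg}\;\phi$.

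Now the hypothesis is precisely the one needed to invoke Proposition \ref{ndegfunctoriality}: since $\phi_{F(\psi)}$ is isotropic, that proposition gives $\psi_{qp} \subset \phi_{qp}$, and in particular $\mathrm{ndeg}\;\psi = \mathrm{dim}\;\psi_{qp} \leq \mathrm{dim}\;\phi_{qp} = \mathrm{ndeg}\;\phi$. Applying $\log_p$ (which is order-preserving on positive integers, and both norm degrees are powers of $p$ by Remark \ref{completelysplitndeg}) yields the desired conclusion.

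The only minor sanity check required is that $\mathrm{ndeg}\;\phi$ and $\mathrm{ndeg}\;\psi$ are both at least $p$, so that the logarithm formula for height applies without ambiguity; this is immediate from the fact that $\phi$ and $\psi$ are anisotropic of dimension $>1$ (hence not completely split, again by Remark \ref{completelysplitndeg}), so both heights are positive integers and Lemma \ref{isotropytower}(3) is directly applicable. There is no real obstacle here --- the proposition is a formal consequence of the two quoted results, and the proof will be a single sentence.
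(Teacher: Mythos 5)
Your proof is correct and is exactly the paper's argument: the paper introduces the proposition with the sentence ``Together with Proposition \ref{ndegfunctoriality}, part (3) of Lemma \ref{isotropytower} gives the following result,'' which is precisely the two-step deduction you spell out.
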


In other words, a rational map $X_\psi \dashrightarrow X_\phi$ of anisotropic quasilinear $p$-hypersurfaces can only exist provided $h(\psi) \leq h(\phi)$. This ``functorial'' property of the standard splitting tower will be studied further in later sections.\\

Now, let $\pi$ be a quasi-Pfister form over $F$. Assume that $\pi$ is not completely split. Then it follows from Lemma \ref{isotropytower} (2) and Lemma \ref{Pfisterisotropy} (3) that $\mathrm{sp}(\phi) = (p^{h(\pi)},p^{h(\pi) - 1},...,p^2,p,1)$. Clearly the same is true of any scalar multiple of $\phi$. In \cite{Hoffmann2}, D. Hoffmann has proved the following result.

\begin{theorem}[{cf. \citep[Theorem 7.14]{Hoffmann2}}] \label{ssppfister} Let $\phi$ be an anisotropic quasilinear $p$-form of dimension $>1$ over $F$. The following conditions are equivalent.
\begin{enumerate} \item[$\mathrm{(1)}$] $\phi$ is similar to a quasi-Pfister form.
\item[$\mathrm{(2)}$] $\mathrm{sp}(\phi) = (p^{h(\phi)}, p^{h(\phi) - 1},...,p^2,p,1)$.
\item[$\mathrm{(3)}$] $\mathrm{dim}\;\phi_1 = \frac{1}{p}\mathrm{dim}\;\phi$. \end{enumerate} \end{theorem}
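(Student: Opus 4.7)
The implication $(2) \Rightarrow (3)$ is immediate. For $(1) \Rightarrow (2)$, since the standard splitting pattern is unchanged by scalar multiplication, we may assume $\phi$ is itself a quasi-Pfister form. Then $\phi_1 = (\phi_{F(\phi)})_{an}$ is again a quasi-Pfister form by Lemma~\ref{Pfisterisotropy}(3), so $\mathrm{dim}\;\phi_1$ is a power of $p$; combined with $\mathrm{dim}\;\phi_1 \geq \mathrm{dim}\;\phi/p$ (Lemma~\ref{isotropytower}(2)) and $\mathrm{dim}\;\phi_1 < \mathrm{dim}\;\phi$, this forces $\mathrm{dim}\;\phi_1 = \mathrm{dim}\;\phi/p$, and iteration yields the splitting pattern $(p^{h(\phi)}, p^{h(\phi)-1}, \ldots, p, 1)$.

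For the substantive direction $(3) \Rightarrow (1)$, I would argue by induction on $h = h(\phi)$. The base case $h = 1$ is straightforward: condition $(3)$ forces $\mathrm{dim}\;\phi = p$, which together with $\mathrm{ndeg}\;\phi = p$ from Lemma~\ref{isotropytower}(3) gives $\mathrm{dim}\;\phi = \mathrm{dim}\;\phi_{qp}$, and $\phi \sim \phi_{qp}$ then follows from Lemma~\ref{normformlemma}. For the inductive step, the key mechanism is Lemma~\ref{weirdlemma}. Setting $n = \mathrm{dim}\;\phi$ and writing $F(\phi) = K(\sqrt[p]{c})$ for a maximal purely transcendental subextension $K \simeq F(x_1, \ldots, x_{n-1})$ of $F(\phi)/F$, hypothesis $(3)$ yields $i_0(\phi_{K(\sqrt[p]{c})}) = (p-1)n/p$; a direct computation shows that this precisely saturates the hypothesis of Lemma~\ref{weirdlemma} with $m = n/p$. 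One thereby obtains a subform $\pfister{c} \otimes \tau \subset \phi_K$ of dimension $n$, and a comparison of dimensions yields the divisibility $\phi_K \simeq \pfister{c} \otimes \tau$ with $\mathrm{dim}\;\tau = n/p$.

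Since $K/F$ is separable, Lemma~\ref{sepext} implies that $\phi$ is similar to a quasi-Pfister form over $F$ if and only if $\phi_K$ is similar to a quasi-Pfister form over $K$. As $\pfister{c}$ is itself quasi-Pfister, this further reduces the problem to showing that $\tau$ is similar to a quasi-Pfister form over $K$; and since $\mathrm{dim}\;\tau = n/p < n$, a subsidiary induction on $\mathrm{dim}\;\phi$ reduces this in turn to verifying that $\tau$ itself satisfies condition $(3)$ over $K$, namely $\mathrm{dim}\;\tau_1 = n/p^2$.

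The main obstacle is precisely this last verification. It should proceed by analysing the anisotropic behaviour of $\tau$ across the tower $K \subset K(\tau) \subset K(\tau)(\sqrt[p]{c}) = F(\phi_1)$, exploiting the Pfister-divisor structure $\phi_K \simeq \pfister{c} \otimes \tau$ together with repeated application of Lemmas~\ref{pinsepisotropy} and \ref{ffisotropy} to control the relevant defect indices. Heuristically, the saturation of the inequality of Lemma~\ref{ffisotropy}(2) at the first splitting step of $\phi$ should propagate to force the analogous saturation for $\tau$ at its first splitting step, but a rigorous execution of this intuition --- which requires some delicate book-keeping with the function-field extensions and the compatibility of the divisibility under base change --- constitutes the substantive content of the proof.
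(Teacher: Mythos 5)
The paper does not supply a proof of this theorem: it is quoted as \citep[Theorem 7.14]{Hoffmann2}, so there is no internal proof to compare against. Judged on its own merits, your write-up establishes the chain $(1) \Rightarrow (2) \Rightarrow (3)$ correctly, and the opening of $(3) \Rightarrow (1)$ is a genuinely clever observation: working over $K = F(x_1,\ldots,x_{n-1})$ with $F(\phi) = K(\sqrt[p]{c})$, the hypothesis $i_1(\phi) = (p-1)n/p$ makes the hypothesis of Lemma~\ref{weirdlemma} saturate exactly at $m = n/p$, forcing $\phi_K \simeq \pfister{c} \otimes \tau$ with $\mathrm{dim}\;\tau = n/p$, and the two reductions (descent along the separable extension $K/F$ via Lemma~\ref{sepext}, and replacement of $\phi_K$ by $\tau$ via the subsidiary dimension induction) are both sound.

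However, the step you defer --- showing $\mathrm{dim}\;\tau_1 = n/p^2$ --- is not a book-keeping exercise, and I do not believe it can be pushed through by the method you describe. The hypothesis $(3)$ controls only $i_1(\phi)$; it says nothing directly about $i_2(\phi)$, and all the elementary inequalities at your disposal (Lemma~\ref{isotropytower}(2) for $\tau$, the comparison $\mathrm{dim}\;\phi_2 \leq \mathrm{dim}\;\tau_1$ coming from the inclusion $K(\tau) \subset K(\tau)_c = F(\phi)(\phi_1)$) go the wrong way: they bound $\mathrm{dim}\;\tau_1$ from \emph{below} by $n/p^2$, which you already know, but give no upper bound. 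More seriously, the form $\tau$ produced by Lemma~\ref{weirdlemma} is only one of many forms $\tau'$ with $\phi_K \simeq \pfister{c} \otimes \tau'$, and even when $\phi_K$ is similar to a quasi-Pfister form, such a complement $\tau'$ need not itself be similar to a quasi-Pfister form (already for $p = 2$: $\pfister{a,b,c} \simeq \pfister{c} \otimes \form{1,a,b,abc}$, and $\form{1,a,b,abc}$ is not similar to a quasi-Pfister form). So the argument cannot succeed by showing that \emph{every} such complement satisfies condition $(3)$; it would need to exploit the specific construction of $\tau$ in Lemma~\ref{weirdlemma}, or choose $c$ (and with it $\tau$) more carefully, and you have not indicated how to do either. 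As written, the ``saturation propagates'' heuristic papers over the genuinely hard content of the theorem.

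Note also a minor internal consistency issue worth keeping in view: in the subsidiary dimension induction, condition $(3)$ for $\tau$ demands $p \mid \mathrm{dim}\;\tau = n/p$, i.e.\ $p^2 \mid n$, which is not an a priori consequence of $(3)$ for $\phi$. A completed argument must rule out the case $p^2 \nmid n$ en route (it must, since $(3)$ then cannot hold and the dimensions in a splitting pattern would be inconsistent with $(2)$), and the route to that exclusion is exactly the missing piece.
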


\begin{remark} By Lemma \ref{isotropytower} (2), the integer $\mathrm{dim}\;\phi_1$ can be no less than $\frac{1}{p}\mathrm{dim}\;\phi$ in the situation of the above theorem. The result therefore says that the minimum value is realised if and only if $\phi$ is similar to a quasi-Pfister form. This is analogous to a classic result of A. Pfister and M. Knebusch in the theory of nondegenerate quadratic forms over fields of characteristic different from 2. \end{remark}

\section{Compressibility of quasilinear $p$-hypersurfaces and some applications}

To proceed further, we will need to recall the main results of \cite{Scully}. The following result shows that if $\psi$ is an anisotropic quasilinear $p$-form of dimension $>1$ over $F$ with $i_1(\psi) = 1$, then the variety $X_\psi$ cannot be ``rationally compressed'' to a quasilinear $p$-hypersurface of smaller dimension.

\begin{proposition}[{\citep[Corollary 5.11]{Scully}}] \label{i1=1incompressibility} Let $\phi$ and $\psi$ be anisotropic quasilinear $p$-forms of dimension $>1$ over $F$, and suppose that there exists a rational map $f \colon X_\psi \dashrightarrow X_\phi$. If $i_1(\psi) = 1$, then $\mathrm{deg}(f) = 1$. \end{proposition}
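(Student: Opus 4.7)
The plan is to reformulate the statement in terms of function fields and then exploit the minimality imposed by $i_1(\psi) = 1$ against the structural rigidity of $F(\psi)$. A rational map $f \colon X_\psi \dashrightarrow X_\phi$ factors through the closure $Z$ of its image in $X_\phi$, giving an injection $F(Z) \hookrightarrow F(\psi)$ whose degree is $\deg(f)$. After using Lemma \ref{excellence} to replace $\phi$ by a subform corresponding to $Z$ (and invoking Lemma \ref{subformtransitivity} to transport isotropy along the closed embedding $Z \hookrightarrow X_\phi$), I would reduce to the case where $f$ is dominant and $Z = X_\phi$, so that the task becomes showing that the induced embedding $F(\phi) \hookrightarrow F(\psi)$ is an equality.

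The main strategy is to suppose $d := [F(\psi) : F(\phi)] > 1$ and derive a contradiction with $i_1(\psi) = 1$; by definition the latter says $i_0(\psi_{F(\psi)}) = 1$, i.e., the anisotropic $\psi$ picks up exactly one new isotropic dimension over its own function field. The key structural input is that $F(\psi)$ is a degree-$p$ purely inseparable extension of a purely transcendental extension of $F$ (Remark \ref{ffremarks} (1)); in particular every finite subextension of $F(\psi)/F$ is purely inseparable, so $d$ is necessarily a power of $p$. I would then combine the embedding $F(\phi) \subsetneq F(\psi)$ with Lemma \ref{placeisotropy} applied to a suitably chosen $F$-place $F(\psi) \rightharpoonup L$ (e.g.\ obtained from a regular point on $X_\psi$ over a specialisation of $F(\phi)$ via Lemma \ref{placeregpoint}) in order to compare defect indices of $\psi$ across the extension. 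The aim is to produce a second, $F(\psi)$-linearly independent, isotropic direction for $\psi_{F(\psi)}$, which would contradict $i_0(\psi_{F(\psi)}) = 1$.

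The main obstacle, I expect, is genuinely manufacturing this second isotropic direction from the bare degree information. Because $F(\psi)/F(\phi)$ is purely inseparable, there are no non-trivial Galois-type automorphisms to exploit directly, so the approach must rely on the interaction between the two forms $\phi$ and $\psi$ via their quasi-Pfister machinery. Concretely, I would expect to use Lemma \ref{pinsepisotropy} and Lemma \ref{weirdlemma} to decompose the ``extra'' degree in the extension into explicit quasi-Pfister multiples sitting inside $\phi$ or $\psi$, and then to translate these — through Lemma \ref{indexdirectsum} and compositions of places — into genuinely new isotropic vectors of $\psi$ over $F(\psi)$. Verifying the linear independence of these vectors is the delicate endpoint, and is where the precise hypothesis $\deg(f) > 1$ (rather than some larger bound on the degree) needs to be leveraged carefully to rule out the possibility that the ``extra'' isotropy is already accounted for by the tautological one coming from the definition of $F(\psi)$.
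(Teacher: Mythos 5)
The paper does not give its own proof of this proposition; it is quoted from \citep[Corollary 5.11]{Scully}. There is therefore nothing in the present text to compare against, and your proposal has to be assessed on its internal logic alone.

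It does not hold up as a proof. What you write is a plan, and you say so yourself: the final paragraph identifies the crucial step --- manufacturing a second, $F(\psi)$-linearly independent isotropic vector for $\psi_{F(\psi)}$ out of the assumption $\deg(f) > 1$, and verifying its independence from the tautological one --- as ``the delicate endpoint'' and ``the main obstacle''. No argument is actually given for how Lemma~\ref{pinsepisotropy}, Lemma~\ref{weirdlemma}, or Lemma~\ref{indexdirectsum} would translate the degree of a function field extension into new isotropy of $\psi$ over $F(\psi)$; you merely nominate them as hopefully relevant tools. That translation is precisely the hard content of the statement, so the proposal ends where the proof should begin, and it is not evident that the proposed contradiction can be extracted this way at all.

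Two subsidiary problems. The reduction to a dominant map is not as routine as presented: when $f$ is not dominant onto $X_\phi$, the closure $Z$ of its image is an arbitrary closed subvariety of $X_\phi$ and need not be cut out by a subform of $\phi$, so Lemma~\ref{subformtransitivity} does not apply; Lemma~\ref{excellence} produces a subform realising the anisotropic part of $\phi_L$ over a field extension $L$, which is a different thing. And the sentence ``every finite subextension of $F(\psi)/F$ is purely inseparable, so $d$ is necessarily a power of $p$'' conflates the (transcendental) extension $F(\psi)/F$ with $F(\psi)/F(\phi)$, which is the extension whose degree is $d$; even after reducing to the dominant case, the pure inseparability of $F(\psi)/F(\phi)$ is a nontrivial claim about rational maps between quasilinear $p$-hypersurfaces, not a formal consequence of Remark~\ref{ffremarks}~(1).
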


\begin{corollary} \label{i1=1birational} Let $\phi$ and $\psi$ be anisotropic quasilinear $p$-forms of dimension $>1$ over $F$ such that $\phi_{F(\psi)}$ is isotropic. Assume that $i_1(\psi) = 1$ and $\mathrm{dim}\;\phi \leq \mathrm{dim}\;\psi$. Then
\begin{enumerate} \item[$\mathrm{(1)}$] $X_\psi$ and $X_\phi$ are birational, i.e. $F(\psi) \simeq F(\phi)$. In particular, $\mathrm{dim}\;\phi = \mathrm{dim}\;\psi$.
\item[$\mathrm{(2)}$] $i_1(\phi) = 1$. \end{enumerate}
\begin{proof} Part (1) follows immediately from Proposition \ref{i1=1incompressibility}. If $i_1(\phi) > 1$, then every codimension 1 subform of $\phi$ becomes isotropic over $F(\psi) \simeq F(\phi)$ by Lemma \ref{excellence}. This contradicts (1), so (2) also follows. \end{proof} \end{corollary}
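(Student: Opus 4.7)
The plan is to derive both parts directly from Proposition \ref{i1=1incompressibility}, which does the heavy lifting; the clever bit is to bootstrap (2) from (1) by reapplying (1) with a codimension 1 subform of $\phi$ in place of $\phi$.

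For part (1), I would first note that the isotropy of $\phi_{F(\psi)}$ yields a rational map $f \colon X_\psi \dashrightarrow X_\phi$ by Remark \ref{ffremarks} (3). Since $i_1(\psi) = 1$, Proposition \ref{i1=1incompressibility} gives $\mathrm{deg}(f) = 1$, which forces $f$ to be dominant with $\mathrm{dim}\;X_\psi = \mathrm{dim}\;X_\phi$. Combined with the standing hypothesis $\mathrm{dim}\;\phi \leq \mathrm{dim}\;\psi$, this pins down $\mathrm{dim}\;\phi = \mathrm{dim}\;\psi$, and a dominant degree-$1$ rational map of equidimensional varieties is birational, giving $F(\psi) \simeq F(\phi)$.

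For part (2), I would argue by contradiction: assume $i_1(\phi) \geq 2$ and let $\sigma \subset \phi$ be any codimension 1 subform. The first sub-claim is that $\sigma_{F(\phi)}$ is isotropic. Indeed, by Lemma \ref{excellence}, there exists $\tau \subset \phi$ of dimension $\mathrm{dim}\;\phi - i_1(\phi) \leq \mathrm{dim}\;\phi - 2$ with $(\phi_{F(\phi)})_{an} \simeq \tau_{F(\phi)}$. If $\sigma_{F(\phi)}$ were anisotropic, then $D(\sigma_{F(\phi)}) \subset D(\phi_{F(\phi)}) = D(\tau_{F(\phi)})$, so Corollary \ref{anisotropicclassification} would realise $\sigma_{F(\phi)}$ as a subform of $\tau_{F(\phi)}$, whence $\mathrm{dim}\;\sigma \leq \mathrm{dim}\;\tau \leq \mathrm{dim}\;\phi - 2$, contradicting $\mathrm{dim}\;\sigma = \mathrm{dim}\;\phi - 1$.

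With $\sigma_{F(\phi)}$ isotropic, the isomorphism $F(\phi) \simeq F(\psi)$ from (1) makes $\sigma_{F(\psi)}$ isotropic. I would then apply (1) to the pair $(\sigma, \psi)$: the hypotheses are in force because $\sigma_{F(\psi)}$ is isotropic, $i_1(\psi) = 1$, and $\mathrm{dim}\;\sigma = \mathrm{dim}\;\phi - 1 < \mathrm{dim}\;\psi$. The conclusion of (1) then forces $\mathrm{dim}\;\sigma = \mathrm{dim}\;\psi = \mathrm{dim}\;\phi$, contradicting $\mathrm{dim}\;\sigma = \mathrm{dim}\;\phi - 1$. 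The only genuine obstacle is spotting that (1) can be fed back into itself with a smaller form; given Proposition \ref{i1=1incompressibility}, the rest is a clean dimensional bookkeeping.
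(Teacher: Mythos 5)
Your proof is correct and follows the same route as the paper: part (1) is the direct application of Proposition \ref{i1=1incompressibility}, and part (2) is the self-referential application of part (1) to a codimension-1 subform $\sigma \subset \phi$ (noting that $i_1(\phi) > 1$ forces $\dim\phi \geq 3$, so $\dim\sigma > 1$ and the hypotheses are in force). You have merely unwound the paper's terse phrase ``contradicts (1)'' and spelled out, via Lemma \ref{excellence} and Corollary \ref{anisotropicclassification}, why $i_1(\phi) > 1$ makes every codimension-1 subform isotropic over $F(\phi)$ — a step the paper leaves implicit.
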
 

If $\phi$ is an anisotropic quasilinear $p$-form of dimension $>1$ over $F$, the integer $\mathrm{dim}_{Izh}\phi = \mathrm{dim}\;\phi_1 + 1$ is called the \emph{Izhboldin dimension of $\phi$}. By reducing to the case where $i_1(\psi) = 1$ (that is, where $\mathrm{dim}_{Izh}\psi = \mathrm{dim}\;\psi$), one may deduce from Corollary \ref{i1=1birational} the following result, which was first proved in the case $p=2$ by B. Totaro (cf. \citep[Theorem 5.1]{Totaro}).

\begin{theorem}[{cf. \citep[Theorem 5.12]{Scully}}] \label{compressibilitytheorem} Let $\phi$ and $\psi$ be anisotropic quasilinear $p$-forms of dimension $>1$ over $F$ such that $\phi_{F(\psi)}$ is isotropic.
\begin{enumerate} \item[$\mathrm{(1)}$] $\mathrm{dim}_{Izh}\psi \leq \mathrm{dim}\;\phi$.
\item[$\mathrm{(2)}$] If equality holds in (1), then $\psi_{F(\phi)}$ is isotropic. \end{enumerate} \end{theorem}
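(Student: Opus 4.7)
The plan is to reduce the general case to the situation where $i_1(\psi) = 1$, at which point Corollary \ref{i1=1birational} applies directly. In that base case $\mathrm{dim}_{Izh}\psi = \mathrm{dim}\;\psi$, so I argue by comparing $\mathrm{dim}\;\phi$ with $\mathrm{dim}\;\psi$: if $\mathrm{dim}\;\phi \leq \mathrm{dim}\;\psi$, then Corollary \ref{i1=1birational} forces $F(\psi) \simeq F(\phi)$ and $\mathrm{dim}\;\phi = \mathrm{dim}\;\psi$, which simultaneously rules out the strict inequality $\mathrm{dim}\;\phi < \mathrm{dim}\;\psi$ (giving (1)) and, in the equality case, yields $\psi_{F(\phi)} \simeq \psi_{F(\psi)}$, which is isotropic by construction of the generic splitting tower (giving (2)); if $\mathrm{dim}\;\phi > \mathrm{dim}\;\psi$ then (1) is immediate and (2) is vacuous.

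For the reduction, the goal is to construct a subform $\psi^* \subseteq \psi$ with $\mathrm{dim}\;\psi^* = \mathrm{dim}_{Izh}\psi$ and $i_1(\psi^*) = 1$, and then to apply the base case to $\phi$ and $\psi^*$. By Lemma \ref{subformtransitivity} (1) there is an $F$-place $F(\psi) \rightharpoonup F(\psi^*)$, so Lemma \ref{placeisotropy} (1) shows that $\phi_{F(\psi^*)}$ is isotropic. The base case applied to $(\phi,\psi^*)$ then yields $\mathrm{dim}\;\phi \geq \mathrm{dim}\;\psi^* = \mathrm{dim}_{Izh}\psi$, which is (1); and in the case of equality it gives that $\psi^*_{F(\phi)}$ is isotropic, from which $\psi_{F(\phi)}$ is isotropic because $\psi^* \subseteq \psi$, proving (2).

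To construct $\psi^*$, first invoke Lemma \ref{excellence} to obtain a subform $\sigma \subseteq \psi$ of dimension $\mathrm{dim}\;\psi_1 = \mathrm{dim}_{Izh}\psi - 1$ with $\sigma_{F(\psi)} \simeq \psi_1$. Pick any vector $v \in V_\psi \setminus V_\sigma$ and let $\psi^*$ be the subform of $\psi$ spanned by $V_\sigma$ together with $v$; then $\sigma \subseteq \psi^* \subseteq \psi$ and $\mathrm{dim}\;\psi^* = \mathrm{dim}_{Izh}\psi$. Since $(\psi_{F(\psi)})_{an} \simeq \psi_1$ is already realised on $V_\sigma$, applying Proposition \ref{classification} to the chain $\sigma_{F(\psi)} \subseteq \psi^*_{F(\psi)} \subseteq \psi_{F(\psi)}$ shows that $(\psi^*_{F(\psi)})_{an} \simeq \psi_1$, so $i_0(\psi^*_{F(\psi)}) = \mathrm{dim}\;\psi^* - \mathrm{dim}\;\psi_1 = 1$ and in particular $\psi^*_{F(\psi)}$ is isotropic.

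The decisive step is then to verify $i_1(\psi^*) = 1$, and this is where the main obstacle lies. In the cases $p \leq 3$ it is an immediate consequence of Corollary \ref{23minimalityi1}: taking the form $\psi^*$ and the extension $L = F(\psi)$ there, one obtains $i_1(\psi^*) = i_0(\psi^*_{F(\psi^*)}) \leq i_0(\psi^*_{F(\psi)}) = 1$, so equality holds throughout. For general $p$ the ``generic minimality'' statement of Corollary \ref{23minimalityi1} is not available, and one must instead argue $i_1(\psi^*) = 1$ through a more direct analysis of rational maps between $X_{\psi^*}$ and $X_\psi$, presumably by combining Proposition \ref{i1=1incompressibility} with a careful choice of auxiliary form; I would expect this final step to absorb the bulk of the technical work.
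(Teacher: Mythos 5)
Your base case (via Corollary~\ref{i1=1birational}) and the overall reduction match the strategy the paper sketches in the sentence preceding the theorem, and the passage from $\psi$ to $\psi^*$ is set up correctly: the place from Lemma~\ref{subformtransitivity}, combined with Lemma~\ref{placeisotropy}, transports the isotropy of $\phi$ to $F(\psi^*)$, and the identity $i_0(\psi^*_{F(\psi)}) = 1$ follows cleanly from sandwiching $\psi^*$ between $\sigma$ and $\psi$.

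The decisive claim $i_1(\psi^*) = 1$, however, is only established for $p \leq 3$, and this is a genuine gap rather than a deferrable technicality. The inference from $i_0(\psi^*_{F(\psi)}) = 1$ to $i_1(\psi^*) \leq 1$ is precisely the ``generic minimality'' of $i_1$, which in this paper appears only as Corollary~\ref{23minimalityi1}; that corollary rests on Proposition~\ref{23transitivity}, which is restricted to $p \in \lbrace 2,3 \rbrace$, and its status for larger $p$ is exactly Question~\ref{questions}~(2), stated as open. You also cannot patch the step using Corollary~\ref{ffanisotropysubform}, Corollary~\ref{neighbourindex}, or Theorem~\ref{ruledness}, since in the present paper all of these are derived from Theorem~\ref{compressibilitytheorem} itself and would make the reasoning circular. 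The theorem is asserted for every prime $p$ (the paper cites \cite{Scully} rather than giving a proof), so a complete argument has to perform the reduction to $i_1 = 1$ without presupposing generic minimality of $i_1$ --- for example, via a finer induction on $\mathrm{dim}\,\psi$ that controls the Izhboldin dimension at each step rather than jumping directly to a minimal neighbour. As written, your proof is valid only for $p \in \lbrace 2,3 \rbrace$ and leaves the general case unresolved.
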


Here is a very useful corollary of this result.

\begin{corollary} \label{ffanisotropysubform} Let $\phi$ and $\psi$ be anisotropic quasilinear $p$-forms of dimension $>1$ over $F$, and let $\sigma \subset \phi$ be a subform such that $\mathrm{dim}\;\sigma \leq \mathrm{dim}\;\psi_1$. Then $\sigma_{F(\psi)} \subset (\phi_{F(\psi)})_{an}$. In particular, $\sigma_{F(\psi)}$ is anisotropic.
\begin{proof} Clearly we have $D(\sigma_{F(\psi)}) \subset D(\phi_{F(\psi)})$. In view of Corollary \ref{anisotropicclassification}, it is therefore enough to show that $\sigma_{F(\psi)}$ is anisotropic. If $\mathrm{dim}\;\sigma = 1$, there is nothing to prove. Otherwise, the assertion follows immediately from Theorem \ref{compressibilitytheorem}. \end{proof} \end{corollary}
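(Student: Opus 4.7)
The plan is to split the statement into two pieces: first reduce the inclusion $\sigma_{F(\psi)} \subset (\phi_{F(\psi)})_{an}$ to the mere anisotropy of $\sigma_{F(\psi)}$, and then derive that anisotropy from Theorem \ref{compressibilitytheorem}. All the real work lies in the second step; the first is essentially a formal consequence of the classification of anisotropic forms by their value sets.

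For the reduction, I would note that since $\sigma \subset \phi$ one has $D(\sigma) \subset D(\phi)$, and this containment survives base change to any field extension, so in particular $D(\sigma_{F(\psi)}) \subset D(\phi_{F(\psi)}) = D((\phi_{F(\psi)})_{an})$. Once we know $\sigma_{F(\psi)}$ is itself anisotropic, Corollary \ref{anisotropicclassification} immediately yields $\sigma_{F(\psi)} \subset (\phi_{F(\psi)})_{an}$. Thus the inclusion in the statement is equivalent to the ``in particular'' part.

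To prove anisotropy of $\sigma_{F(\psi)}$ I would argue by contradiction and in terms of the Izhboldin dimension. The case $\mathrm{dim}\;\sigma = 1$ is trivial since $\sigma$ is then a $1$-dimensional subform of the anisotropic form $\phi$. In the case $\mathrm{dim}\;\sigma \geq 2$, suppose for contradiction that $\sigma_{F(\psi)}$ were isotropic. Then Theorem \ref{compressibilitytheorem}(1), applied with $\phi$ replaced by $\sigma$, gives $\mathrm{dim}_{Izh}\psi \leq \mathrm{dim}\;\sigma$. Since $\mathrm{dim}_{Izh}\psi = \mathrm{dim}\;\psi_1 + 1$ by definition, this yields
\begin{equation*}
\mathrm{dim}\;\sigma \geq \mathrm{dim}\;\psi_1 + 1 > \mathrm{dim}\;\psi_1,
\end{equation*}
contradicting the hypothesis $\mathrm{dim}\;\sigma \leq \mathrm{dim}\;\psi_1$. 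Hence $\sigma_{F(\psi)}$ is anisotropic, completing the proof.

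There is no serious obstacle here: the entire argument is a one-step appeal to Theorem \ref{compressibilitytheorem}, combined with the formal reduction via Corollary \ref{anisotropicclassification}. The role of the hypothesis $\mathrm{dim}\;\sigma \leq \mathrm{dim}\;\psi_1$ is precisely to ensure that the Izhboldin bound forbids isotropy upon passage to $F(\psi)$.
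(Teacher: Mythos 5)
Your proof is correct and follows exactly the paper's route: reduce the inclusion to anisotropy via Corollary \ref{anisotropicclassification}, dispose of the trivial case $\mathrm{dim}\;\sigma = 1$, and then appeal to Theorem \ref{compressibilitytheorem}(1) applied to the pair $(\sigma, \psi)$, using $\mathrm{dim}_{Izh}\psi = \mathrm{dim}\;\psi_1 + 1$ to contradict the hypothesis $\mathrm{dim}\;\sigma \leq \mathrm{dim}\;\psi_1$. You have simply spelled out the contradiction that the paper leaves implicit with the phrase ``follows immediately.''
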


Another important application of Theorem \ref{compressibilitytheorem} is the following result, which again, is due to B. Totaro in the case where $p=2$ (cf. \citep[Theorem 6.4]{Totaro}).

\begin{theorem}[{\citep[Theorem 7.6]{Scully}}] \label{ruledness} Let $\phi$ and $\psi$ be anisotropic quasilinear $p$-forms of dimension $>1$ over $F$. Assume that $\psi$ is similar to a subform of $\phi$ and that $\mathrm{dim}\;\psi = \mathrm{dim}_{Izh}\phi$. Then $F(\phi)$ is isomorphic to a purely transcendental extension of $F(\psi)$. \end{theorem}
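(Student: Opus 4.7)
Without loss of generality, we may rescale $\psi$ and assume $\psi \subset \phi$; write $\phi \simeq \psi \oplus \sigma$, so that $\mathrm{dim}\;\sigma = i_1(\phi) - 1 =: r$. The plan is first to show $\psi_{F(\phi)}$ is isotropic, thereby producing an $F$-embedding $\iota \colon F(\psi) \hookrightarrow F(\phi)$, and then to exhibit $F(\phi)$ as the purely transcendental extension of $\iota(F(\psi))$ generated by coordinates on $V_\sigma$.

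For the isotropy of $\psi_{F(\phi)}$, pick any codimension-$1$ subform $\psi' \subset \psi$ and write $\psi \simeq \psi' \oplus \form{a}$. Since $\mathrm{dim}\;\psi' = \mathrm{dim}\;\psi - 1 = \mathrm{dim}\;\phi_1$, applying Corollary \ref{ffanisotropysubform} to the inclusion $\psi' \subset \phi$ (with $F(\phi)$ as the extension) shows that $\psi'_{F(\phi)}$ is an anisotropic subform of $\phi_1 := (\phi_{F(\phi)})_{an}$, and matching dimensions forces $\psi'_{F(\phi)} \simeq \phi_1$. Hence $D(\psi'_{F(\phi)}) = D(\phi_1) = D(\phi_{F(\phi)}) \supseteq D(\psi) \ni a$, and Lemma \ref{indexdirectsum} gives $i_0(\psi_{F(\phi)}) = i_0(\psi'_{F(\phi)} \oplus \form{a}) \geq 1$. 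Any associated isotropic vector must have nonzero $a$-component (else $\psi'_{F(\phi)}$ would itself be isotropic), so the resulting $F(\phi)$-point of $X_\psi$ does not factor through $X_{\psi'}$, and a routine transcendence-degree check confirms that it is the generic point, yielding the desired injection $\iota$.

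For the second step, choose $F$-linear coordinates $(X_i)$ on $V_\psi$ diagonalising $\psi \simeq \form{a_1, \ldots, a_n}$ and $(Y_j)$ on $V_\sigma$ diagonalising $\sigma \simeq \form{c_1, \ldots, c_r}$. Inside the ambient field $F(\phi)(a_1^{1/p}, \ldots, a_n^{1/p}, c_1^{1/p}, \ldots, c_r^{1/p})$ the defining relation of $X_\phi$ linearises as $\sum_i X_i a_i^{1/p} + \sum_j Y_j c_j^{1/p} = 0$, and from this one explicitly solves for an isotropic vector $(U_1, \ldots, U_n) \in F(\phi)^n$ of $\psi_{F(\phi)}$ whose entries are rational functions in the $X_i, Y_j$ with denominators depending only on the $Y_j$. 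Inverting these formulae expresses each $X_i$ as a rational function of the $U_k$ and the $Y_j$, whence $F(\phi) = \iota(F(\psi))(Y_1, \ldots, Y_r)$; algebraic independence of the $Y_j$ over $\iota(F(\psi))$ is then forced by the identity $\mathrm{trdeg}_F F(\phi) - \mathrm{trdeg}_F F(\psi) = r$. The main obstacle lies in the explicit linear-algebra computation producing the $U_i$: one must systematically reduce the linearised relation above, viewed in the $F(\phi)$-module generated by $\{a_1^{1/p}, \ldots, a_n^{1/p}, c_1^{1/p}, \ldots, c_r^{1/p}\}$, to a shorter relation involving only the $a_i^{1/p}$ with coefficients in $F(\phi)$, thereby eliminating the $c_j^{1/p}$ contribution.
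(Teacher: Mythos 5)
Your first step — establishing $i_0(\psi_{F(\phi)}) = 1$ via Corollary \ref{ffanisotropysubform} and Lemma \ref{indexdirectsum} — is correct and is indeed how one starts. But the rest of the argument has a real gap, and I think the two halves as you have ordered them are circular. The claim that ``a routine transcendence-degree check confirms that it is the generic point'' is not routine: a priori the residue field $k(x)$ of the image point $x \in X_\psi$ could have any transcendence degree up to $n-2$ over $F$ (where $n = \mathrm{dim}\,\psi$), and anisotropic quasilinear forms readily become isotropic over small extensions, so $\psi_{F(\phi)}$ acquiring an isotropic vector does not by itself force the induced rational map $X_\phi \dashrightarrow X_\psi$ to be dominant. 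Ruling out the single divisor $X_{\psi'}$ tells you the point lies in the dense open chart $\{X_n \neq 0\}$, but nothing more. The only way to establish $\mathrm{trdeg}_F\,k(x) = n-2$ is precisely to carry out your step two, i.e.\ to show that $F(\phi)$ is generated over $k(x)$ by the $Y_j$, which forces $\mathrm{trdeg}_F\,k(x) \geq \mathrm{trdeg}_F\,F(\phi) - r = n-2$.

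And that is exactly the step you flag as ``the main obstacle'' and do not carry out. Unwinding your linearisation: writing $a_n = \sum_{i<n} a_i u_i^p$ and $c_j = \sum_{i<n} a_i v_{ij}^p$ over $F(\phi)$ (possible since $\psi'_{F(\phi)} \simeq \phi_1$ forces $D(\phi_{F(\phi)})$ to be spanned by $a_1,\dots,a_{n-1}$), the relation $\phi = 0$ gives $x_i = -u_i - \sum_j v_{ij} y_j$ for $i < n$. To invert this and express $x_i$ as a rational function of $u_1,\dots,u_{n-1},y_1,\dots,y_r$, you must know that each $v_{ij}$ lies in the subfield $F(u_1,\dots,u_{n-1},y_1,\dots,y_r)$, or equivalently verify your assertion that the $u_i$ are linear in the $x$-coordinates with $y$-only denominators. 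This is the genuine content of the theorem, and the vague instruction to ``systematically reduce the linearised relation \dots thereby eliminating the $c_j^{1/p}$ contribution'' does not carry it out; in particular, it is unclear whether your reduction recovers the $v_{ij}$'s $y$-independence rather than reintroducing $x$-dependence. So the high-level plan (isotropy via \ref{ffanisotropysubform}, then an explicit rational parametrisation of $X_\phi$ over $X_\psi$) is the right one, but the heart of the proof is missing.
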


\begin{corollary}[{\citep[Proposition 6.1]{Scully}}] \label{neighbourindex} Let $\phi$ and $\psi$ be anisotropic quasilinear $p$-forms of dimension $>1$ over $F$. Assume that $\psi$ is similar to a subform of $\phi$ and that $\mathrm{dim}\;\psi \geq \mathrm{dim}_{Izh}\;\phi$. Then $\mathrm{dim}\;\psi_1 = \mathrm{dim}\;\phi_1$.
\begin{proof} By Lemma \ref{subformtransitivity}, there exists an $F$-place $F(\phi) \rightharpoonup F(\psi)$. In view of Lemma \ref{placeisotropy}, $i_1(\psi) \geq i_0(\psi_{F(\phi)})$, and hence $\mathrm{dim}\;\psi_1 \leq \mathrm{dim}\;\phi_1$. For the reverse inequality, let $\sigma \subset \psi$ be a subform of dimension $\mathrm{dim}_{Izh}\;\phi$. By the same reasoning we have $\mathrm{dim}\;\sigma_1 \leq \mathrm{dim}\;\psi_1$. We are therefore reduced to the case where $\mathrm{dim}\;\psi = \mathrm{dim}_{Izh}\phi$. In this case, $F(\psi) \sim_F F(\phi)$ by Theorem \ref{ruledness} and Example \ref{stabbireq}. By Lemma \ref{placeisotropy}, we have $i_1(\psi) = i_0(\psi_{F(\phi)})$. But $i_0(\psi_{F(\phi)}) \leq 1$ by Corollary \ref{ffanisotropysubform}, and so $i_1(\psi) = 1$. The result follows. \end{proof} \end{corollary}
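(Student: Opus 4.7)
The plan is to prove both inequalities $\mathrm{dim}\;\psi_1 \leq \mathrm{dim}\;\phi_1$ and $\mathrm{dim}\;\psi_1 \geq \mathrm{dim}\;\phi_1$ separately. The first will follow routinely from the ``functoriality'' machinery already packaged in Lemma \ref{subformtransitivity} and Lemma \ref{placeisotropy}, while the second will require a reduction to an extremal case and an appeal to the ruledness theorem (Theorem \ref{ruledness}).

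For the upper bound, after replacing $\psi$ by a similar form I may assume $\psi \subset \phi$ outright, since scaling alters neither the function field nor the splitting pattern of $\psi$. Then Lemma \ref{subformtransitivity} supplies an $F$-place $F(\phi) \rightharpoonup F(\psi)$, so Lemma \ref{placeisotropy} yields $i_1(\psi) = i_0(\psi_{F(\psi)}) \geq i_0(\psi_{F(\phi)})$. On the other hand, the inclusion $D(\psi_{F(\phi)}) \subset D(\phi_{F(\phi)})$ yields, via Corollary \ref{anisotropicclassification}, that $(\psi_{F(\phi)})_{an} \subset (\phi_{F(\phi)})_{an} = \phi_1$. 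Combining these gives $\mathrm{dim}\;\psi_1 = \mathrm{dim}\;\psi - i_1(\psi) \leq \mathrm{dim}\;(\psi_{F(\phi)})_{an} \leq \mathrm{dim}\;\phi_1$, as desired.

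The reverse inequality is the main challenge. I would first reduce to the extremal case $\mathrm{dim}\;\psi = \mathrm{dim}_{Izh}\phi$ by choosing a subform $\sigma \subset \psi$ of this precise dimension (possible by hypothesis). Applying the first inequality to $\sigma \subset \psi$ gives $\mathrm{dim}\;\sigma_1 \leq \mathrm{dim}\;\psi_1$, so it suffices to establish $\mathrm{dim}\;\sigma_1 \geq \mathrm{dim}\;\phi_1$. Here Theorem \ref{ruledness} applies, giving that $F(\phi)$ is purely transcendental over $F(\sigma)$, hence $F(\sigma) \sim_F F(\phi)$ (Example \ref{stabbireq}); by Lemma \ref{placeisotropy} this translates to $i_1(\sigma) = i_0(\sigma_{F(\phi)})$. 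To finish I would show $i_0(\sigma_{F(\phi)}) = 1$: any codimension-$1$ subform $\rho \subset \sigma$ satisfies $\mathrm{dim}\;\rho = \mathrm{dim}\;\phi_1$, so by Corollary \ref{ffanisotropysubform} the form $\rho_{F(\phi)}$ is anisotropic, which via Lemma \ref{indexdirectsum} forces $i_0(\sigma_{F(\phi)}) \leq 1$; the reverse bound is automatic since $\sigma_{F(\sigma)}$, hence $\sigma_{F(\phi)}$, is isotropic. Thus $i_1(\sigma) = 1$ and $\mathrm{dim}\;\sigma_1 = \mathrm{dim}\;\phi_1$.

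The main obstacle will be the reverse inequality. The one-directional nature of the $F$-place arguments (an $F$-place $K \rightharpoonup L$ moves isotropy only from $K$ to $L$) means they cannot by themselves upgrade an inequality of dimensions to an equality. The essential extra ingredient is Theorem \ref{ruledness}, which provides the opposite equivalence $F(\sigma) \sim_F F(\phi)$ at the extremal dimension $\mathrm{dim}\;\sigma = \mathrm{dim}_{Izh}\phi$; reducing to this dimension via a well-chosen subform is therefore the key structural move.
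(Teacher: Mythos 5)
Your proposal is correct and follows essentially the same approach as the paper's own proof: the upper bound via Lemma \ref{subformtransitivity} and Lemma \ref{placeisotropy}, reduction of the lower bound to the extremal case $\mathrm{dim}\;\psi = \mathrm{dim}_{Izh}\phi$ via a well-chosen subform $\sigma$, and then Theorem \ref{ruledness} together with Corollary \ref{ffanisotropysubform} to force $i_1(\sigma) = 1$. You merely spell out two steps the paper leaves terse — the inclusion $(\psi_{F(\phi)})_{an} \subset \phi_1$ via Corollary \ref{anisotropicclassification}, and the deduction $i_0(\sigma_{F(\phi)}) \leq 1$ from the anisotropy of codimension-one subforms — which are correct and helpful clarifications.
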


The above results have a number of important consequences for the standard splitting pattern. We will need the following proposition.

\begin{proposition} \label{equivalenceandssp} Let $\phi$ and $\psi$ be anisotropic quasilinear $p$-forms of dimension $>1$ over $F$. If $F(\psi) \sim_F F(\phi)$, then $\widetilde{\mathrm{sp}}(\psi) = \widetilde{\mathrm{sp}}(\phi)$.
\begin{proof} Let $\sigma \subset \psi$ be a subform of dimension $\mathrm{dim}_{Izh}\psi$. By Theorem \ref{ruledness}, $F(\psi)$ is isomorphic to a purely transcendental extension of $F(\sigma)$. It follows from Example \ref{stabbireq} and Remark \ref{sspremarks} (2) that the standard splitting patterns of $\psi$ and $\sigma$ are $F$-equivalent. In particular, the standard splitting tower of $\psi$ can be used to compute $\mathrm{sp}(\sigma)$ by Lemma \ref{placeisotropy}. Now, by Corollary \ref{neighbourindex}, we have $\mathrm{dim}\;\psi_1 = \mathrm{dim}\;\sigma_1$. It follows that $(\sigma_1)_{F(\psi)} \simeq \psi_1$, and in view of the above remarks, we have
\begin{equation*} \widetilde{\mathrm{sp}}(\psi) = \mathrm{sp}(\psi_1) = \mathrm{sp}((\sigma_1)_{F(\psi)}) = \widetilde{\mathrm{sp}}(\sigma). \end{equation*}
Since $F(\psi) \sim_F F(\sigma)$, we may therefore replace $\psi$ with $\sigma$ and reduce to the case $i_1(\psi) = 1$. In the same way, we may also reduce to the case where $i_1(\phi) = 1$. Now, Lemma \ref{placeisotropy} (1) shows that both $\phi_{F(\psi)}$ and $\psi_{F(\phi)}$ are isotropic. By Corollary \ref{i1=1birational} (1), we have $F(\psi) \simeq F(\phi)$. In particular, $\mathrm{dim}\;\psi = \mathrm{dim}\;\phi$. Since $i_1(\psi) = i_1(\phi) = 1$, it follows that $\mathrm{dim}\;\psi_1 = \mathrm{dim}\;\phi_1$. If $h(\psi) = 1$, then we are done. If $h(\psi) >1$, then we have to show that $\mathrm{sp}(\psi_2) = \mathrm{sp}(\phi_2)$. We may identify the fields $L = F(\psi) = F(\phi)$. Under this identification, Remark \ref{sspremarks} (2) implies that $L(\psi_1) \sim_L L(\phi_1)$. Since $h(\psi_1) < h(\psi)$ we may argue by induction to get
\begin{equation*} \mathrm{sp}(\psi_2) = \widetilde{\mathrm{sp}}(\psi_1) = \widetilde{\mathrm{sp}}(\phi_1) = \mathrm{sp}(\phi_2), \end{equation*}
and the proposition is proved. \end{proof} \end{proposition}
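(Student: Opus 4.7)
My plan is to proceed by induction on the height $h(\psi)$, reducing first to the case where $i_1(\psi) = i_1(\phi) = 1$ so that Corollary \ref{i1=1birational} becomes available. For the reduction, I would choose a subform $\sigma \subset \psi$ of dimension $\mathrm{dim}_{Izh}\;\psi$. By Theorem \ref{ruledness}, $F(\psi)$ is a purely transcendental extension of $F(\sigma)$, hence $F(\sigma) \sim_F F(\psi) \sim_F F(\phi)$, and by Corollary \ref{neighbourindex} one has $i_1(\sigma) = 1$. To legitimately replace $\psi$ with $\sigma$, I need to check that $\widetilde{\mathrm{sp}}(\sigma) = \widetilde{\mathrm{sp}}(\psi)$: Lemma \ref{placeisotropy}(1) gives $i_0(\sigma_{F(\psi)}) = i_0(\sigma_{F(\sigma)}) = 1$, so $(\sigma_{F(\psi)})_{an}$ has dimension $\mathrm{dim}\;\psi_1$; the inclusion $\sigma \subset \psi$ combined with Corollary \ref{anisotropicclassification} then forces $(\sigma_{F(\psi)})_{an} \simeq \psi_1$, and Lemma \ref{placeisotropy} applied to the equivalence $F(\sigma) \sim_F F(\psi)$ means that the rest of the splitting tower of $\sigma$ can be computed using that of $\psi$. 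The same reduction is performed on $\phi$. After this, I assume $i_1(\psi) = i_1(\phi) = 1$ and, by symmetry, $\mathrm{dim}\;\phi \leq \mathrm{dim}\;\psi$.

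In the reduced case, Lemma \ref{placeisotropy}(1) gives $i_0(\phi_{F(\psi)}) = i_0(\phi_{F(\phi)}) = 1 > 0$, so $\phi_{F(\psi)}$ is isotropic; Corollary \ref{i1=1birational}(1) then supplies an $F$-isomorphism $L := F(\psi) \simeq F(\phi)$ together with $\mathrm{dim}\;\phi = \mathrm{dim}\;\psi$, whence $\mathrm{dim}\;\phi_1 = \mathrm{dim}\;\psi_1$. For the base case $h(\psi) = 1$, Lemma \ref{placeisotropy}(2) yields $\mathrm{ndeg}\;\phi = \mathrm{ndeg}\;\psi$, so $h(\phi) = h(\psi) = 1$ by Lemma \ref{isotropytower}(3) and $\widetilde{\mathrm{sp}}(\psi) = \widetilde{\mathrm{sp}}(\phi) = (1)$. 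For $h(\psi) > 1$, I view $\psi_1$ and $\phi_1$ as anisotropic forms over the common field $L$ and invoke the inductive hypothesis, which applies once I verify $L(\psi_1) \sim_L L(\phi_1)$. This follows by combining Remark \ref{sspremarks}(2), which gives $L(\psi_1) \sim_L F(\psi^{\times 2})$ and $L(\phi_1) \sim_L F(\phi^{\times 2})$, with the fact that the birational equivalence $X_\psi \sim_F X_\phi$ coming from $F(\psi) \simeq F(\phi)$ lifts to an $L$-isomorphism $F(\psi^{\times 2}) \simeq F(\psi \times \phi) \simeq F(\phi^{\times 2})$ compatible with the natural first-projection $L$-structures.

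The main obstacle is the bookkeeping in the reduction step, because $\widetilde{\mathrm{sp}}(\sigma)$ and $\widetilde{\mathrm{sp}}(\psi)$ are a priori computed in genuinely different standard splitting towers; the only bridge is the $F$-equivalence $F(\sigma) \sim_F F(\psi)$ together with the isomorphism $(\sigma_{F(\psi)})_{an} \simeq \psi_1$, and one must argue that these two pieces of information propagate all the way up the tower via iterated use of Lemma \ref{placeisotropy}. Once this reduction is cleanly established, the remainder is a routine induction on height, enabled by the strong conclusion of Corollary \ref{i1=1birational} which collapses $F$-equivalence to an outright $F$-isomorphism in the $i_1 = 1$ setting.
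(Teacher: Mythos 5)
Your proof is correct and follows essentially the same route as the paper's: reduce to minimal neighbours via Theorem \ref{ruledness} and Corollary \ref{neighbourindex} (so that $i_1(\psi) = i_1(\phi) = 1$), invoke Corollary \ref{i1=1birational} to upgrade $F$-equivalence to an outright $F$-isomorphism $F(\psi) \simeq F(\phi)$, and then induct on height by passing to $\psi_1$ and $\phi_1$ over the common field $L$. The bookkeeping you flag in the reduction step is exactly the part the paper also spells out (via the $F$-equivalence of the two standard splitting towers and Lemma \ref{placeisotropy}), and your handling of it is fine. The one small slip is in the base case $h(\psi)=1$: Lemma \ref{placeisotropy}(2) compares $\mathrm{ndeg}\;\rho_K$ with $\mathrm{ndeg}\;\rho_L$ for a \emph{fixed} form $\rho$ over $F$ and does not directly yield $\mathrm{ndeg}\;\psi = \mathrm{ndeg}\;\phi$; the paper's shortcut is simply that $\mathrm{dim}\;\phi_1 = \mathrm{dim}\;\psi_1 = 1$ already forces $h(\phi)=1$ (alternatively one could cite Proposition \ref{heightfunctoriality} twice), so no new argument is needed there.
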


In particular, the standard splitting pattern is a birational invariant on the class of anisotropic quasilinear $p$-hypersurfaces. We shall explore this further in the following sections (\S 8 in particular).

\section{Standard splitting and subforms} 

We now use the results of the previous section to study the relationship between the standard splitting pattern of a quasilinear $p$-form and those of its subforms. We begin by studying a special class of subforms, which we call neighbours.

\subsection{Neighbours.} The following definition is motivated by Corollary \ref{neighbourindex}.

\begin{definition} \label{neighbourdef} Let $\phi$ and $\psi$ be anisotropic quasilinear $p$-forms of dimension $>1$ over $F$. We say that $\psi$ is a \emph{neighbour of $\phi$} if $\psi$ is similar to a subform of $\phi$ and $\mathrm{dim}\;\psi \geq \mathrm{dim}_{Izh}\phi$. If additionally $\mathrm{dim}\;\psi = \mathrm{dim}_{Izh}\phi$, then $\psi$ is called a \emph{minimal neighbour of $\phi$}. \end{definition}

\begin{proposition} \label{sspneighbours} Let $\phi$ be an anisotropic quasilinear $p$-form of dimension $>1$ over $F$, and let $\psi$ be a neighbour of $\phi$. Then $F(\phi)$ is isomorphic to a purely transcendental extension of $F(\psi)$. In particular, $F(\psi) \sim_F F(\phi)$ and $\widetilde{\mathrm{sp}}(\psi) = \widetilde{\mathrm{sp}}(\phi)$.
\begin{proof} The second statement follows from the first in view of Example \ref{stabbireq} and Proposition \ref{equivalenceandssp}. For the first statement, the case where $\psi$ is a minimal neighbour of $\phi$ was treated in Theorem \ref{ruledness}. To prove the general case, one now only needs to check that every minimal neighbour of $\psi$ is also a minimal neighbour of $\phi$. This was done in Corollary \ref{neighbourindex}. \end{proof} \end{proposition}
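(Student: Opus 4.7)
The statement decomposes into the assertion about the pure transcendence of $F(\phi)$ over $F(\psi)$ and the consequences for $\sim_F$-equivalence and $\widetilde{\mathrm{sp}}$. The second part is immediate from the first: a purely transcendental extension gives stable birational equivalence (Example \ref{stabbireq}), hence $F(\psi) \sim_F F(\phi)$, and Proposition \ref{equivalenceandssp} then yields $\widetilde{\mathrm{sp}}(\psi) = \widetilde{\mathrm{sp}}(\phi)$. So the entire difficulty is concentrated in producing the purely transcendental extension.

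For this, my plan is to argue by induction on the nonnegative integer $\mathrm{dim}\;\psi - \mathrm{dim}_{Izh}\phi$. The base case $\mathrm{dim}\;\psi = \mathrm{dim}_{Izh}\phi$ (i.e., $\psi$ is a minimal neighbour of $\phi$) is precisely the content of Theorem \ref{ruledness}. For the inductive step, I would choose a minimal neighbour $\sigma$ of $\psi$, so that $\sigma$ is similar to a subform of $\psi$ with $\mathrm{dim}\;\sigma = \mathrm{dim}_{Izh}\psi$. The crucial input from Corollary \ref{neighbourindex} is that $\mathrm{dim}\;\psi_1 = \mathrm{dim}\;\phi_1$ (because $\psi$ is a neighbour of $\phi$), and therefore $\mathrm{dim}_{Izh}\psi = \mathrm{dim}_{Izh}\phi$. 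In particular $\mathrm{dim}\;\sigma = \mathrm{dim}_{Izh}\phi$, so $\sigma$ is simultaneously a minimal neighbour of $\psi$ and a minimal neighbour of $\phi$. Theorem \ref{ruledness} applied to $(\phi,\sigma)$ and to $(\psi,\sigma)$ then expresses both $F(\phi)$ and $F(\psi)$ as purely transcendental extensions of $F(\sigma)$, and the desired statement should follow by piecing these two descriptions together.

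The main obstacle I anticipate lies in this final combining step: in general, an intermediate subfield of a purely transcendental extension need not itself be purely transcendental over the base, so a purely formal argument does not close the gap. To resolve this I would appeal to the explicit geometric construction underlying Theorem \ref{ruledness}. Since the closed embeddings $X_\sigma \subset X_\psi \subset X_\phi$ are compatible linear sections of $X_\phi$, the generic projective-bundle structures produced by the theorem can be arranged to be compatible with one another, so that the transcendentals adjoined to $F(\sigma)$ to reach $F(\psi)$ form an initial segment of those adjoined to reach $F(\phi)$. This yields an isomorphism $F(\phi) \cong F(\psi)(t_1,\dots,t_r)$ with $r = \mathrm{dim}\;\phi - \mathrm{dim}\;\psi$, completing the induction.
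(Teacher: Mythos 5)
Your reduction matches the paper's: reduce to the minimal-neighbour case (Theorem \ref{ruledness}) via the observation from Corollary \ref{neighbourindex} that a minimal neighbour $\sigma$ of $\psi$ is simultaneously a minimal neighbour of $\phi$ (because $\mathrm{dim}_{Izh}\psi = \mathrm{dim}_{Izh}\phi$). Where you go astray is in the final combining step. You worry that knowing both $F(\psi)$ and $F(\phi)$ are purely transcendental over $F(\sigma)$ is not enough, and you reach for a geometric compatibility argument about linear sections and projective-bundle structures --- an argument you do not actually carry out, and which is unnecessary.

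The gap you anticipate does not exist, because the question is one of abstract isomorphism of fields, not of intermediate subfields (indeed, $F(\psi)$ is not a subfield of $F(\phi)$ here). By Theorem \ref{ruledness} applied to the pairs $(\psi,\sigma)$ and $(\phi,\sigma)$, we have $F$-isomorphisms $F(\psi) \cong F(\sigma)(y_1,\dots,y_m)$ and $F(\phi) \cong F(\sigma)(x_1,\dots,x_n)$, where $m = \mathrm{dim}\,\psi - \mathrm{dim}\,\sigma$ and $n = \mathrm{dim}\,\phi - \mathrm{dim}\,\sigma \geq m$. Since any two purely transcendental extensions of a fixed field of the same (finite) transcendence degree are isomorphic over that field, we get
\begin{equation*}
F(\phi) \cong F(\sigma)(x_1,\dots,x_n) \cong F(\sigma)(y_1,\dots,y_m)(z_1,\dots,z_{n-m}) \cong F(\psi)(z_1,\dots,z_{n-m}),
\end{equation*}
which is exactly the desired conclusion, with no induction and no appeal to the internal geometry of the proof of Theorem \ref{ruledness}. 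Your induction on $\mathrm{dim}\,\psi - \mathrm{dim}_{Izh}\phi$ is also superfluous once this is seen. So the proposal is correct in outline and matches the paper's route, but the closing step you flag as the ``main obstacle'' is resolved by an elementary field-theoretic observation rather than the unproved geometric compatibility claim you invoke.
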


The following class of forms are of special importance.

\begin{definition}[{cf. \citep[Definition 4.12]{Hoffmann2}}] Let $\phi$ be a quasilinear $p$-form of dimension $>1$ over $F$. We say that $\phi$ is a \emph{quasi-Pfister neighbour} if there exists a quasi-Pfister form $\pi$ over $F$ such that $\phi$ is similar to a subform of $\pi$ and $p \mathrm{dim}\;\phi > \mathrm{dim}\;\pi$. \end{definition}

In the above definition, if $\phi$ is anisotropic, then $\phi$ is a neighbour of $\pi$ in the sense of Definition \ref{neighbourdef} (this follows from Theorem \ref{ssppfister}). Note that in this case, we must have $\pi \simeq \phi_{qp}$ by Lemma \ref{normformlemma}. We can now give the following description of anisotropic quasi-Pfister neighbours.

\begin{theorem} \label{pfisterneighbourclassification} Let $\phi$ be an anisotropic quasilinear $p$-form of dimension $>1$ over $F$, and let $n$ be the unique nonnegative integer such that $p^n < \mathrm{dim}\;\phi \leq p^{n+1}$. The following conditions are equivalent.
\begin{enumerate} \item[$\mathrm{(1)}$] $\phi$ is a quasi-Pfister neighbour.
\item[$\mathrm{(2)}$] $\phi$ is a neighbour of $\phi_{qp}$.
\item[$\mathrm{(3)}$] $\phi_{F(\phi_{qp})}$ is isotropic.
\item[$\mathrm{(4)}$] $h(\phi) = n+1$.
\item[$\mathrm{(5)}$] $\widetilde{\mathrm{sp}}(\phi) = (p^n,p^{n-1},...,p^2,p,1)$.
\item[$\mathrm{(6)}$] $\phi_1$ is similar to a quasi-Pfister form. \end{enumerate}
\begin{proof} We have already discussed the equivalence of (1) and (2) above. Moreover, by Lemma \ref{normformlemma}, (2) holds if and only if $\mathrm{ndeg}\;\phi = \mathrm{dim}\;\phi_{qp} = p^{n+1}$. The equivalence of (2) and (4) therefore follows from part (3) of Lemma \ref{isotropytower}. Since $\widetilde{\mathrm{sp}}(\phi) = \mathrm{sp}(\phi_1)$, the equivalence of (5) and (6) follows from Theorem \ref{ssppfister}. The implication $(5) \Rightarrow (4)$ is trivial, and (2) implies (3) and (5) by Proposition \ref{sspneighbours}. Finally, suppose that $h(\phi) > n+1$. Then $\mathrm{dim}\;\phi_{qp} \geq p^{n+2}$, so that $\mathrm{dim}_{Izh}\phi_{qp} > p^{n+1}$. Since $\mathrm{dim}\;\phi \leq p^{n+1}$, $\phi_{F(\phi_{qp})}$ is anisotropic by Theorem \ref{compressibilitytheorem} (1). This shows that (3) implies (4), and the proof is complete. \end{proof} \end{theorem}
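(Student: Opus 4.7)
My plan is to establish the equivalences by a convenient cycle, leaning heavily on the fact that $\mathrm{ndeg}\,\phi = \dim \phi_{qp}$ is always a power of $p$ (Remark \ref{completelysplitndeg}) and that for a quasi-Pfister form the full splitting pattern is forced by Theorem \ref{ssppfister}. The key observation I want to record at the outset is: since $\phi_{qp}$ is an anisotropic quasi-Pfister form containing $\phi$ up to similarity, and $\dim \phi_{qp}$ is a $p$-power $\geq \dim\phi > p^n$, the condition $h(\phi)=n+1$ (i.e.\ $\dim \phi_{qp} = p^{n+1}$, via Lemma \ref{isotropytower}\,(3)) is simply the statement that $\dim \phi_{qp}$ is the \emph{smallest} $p$-power accommodating $\phi$.

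The equivalence $(1)\Leftrightarrow(2)$ is essentially by definition combined with Lemma \ref{normformlemma}: any quasi-Pfister form $\pi$ that realises condition (1) must contain $\phi_{qp}$, and the dimension bound $p\dim\phi > \dim\pi$ forces $\pi = \phi_{qp}$ since $\dim\phi_{qp} \leq \dim \pi$ is itself a $p$-power. For $(2)\Leftrightarrow (4)$, I will unpack the neighbour condition $\dim\phi \geq \dim_{\mathrm{Izh}}\phi_{qp}$: because $\phi_{qp}$ is quasi-Pfister, Theorem \ref{ssppfister} gives $\dim_{\mathrm{Izh}}\phi_{qp} = \tfrac{1}{p}\dim \phi_{qp}+1$, and this translates into $\dim\phi_{qp} \leq p(\dim\phi-1) < p^{n+2}$, which combined with $\dim\phi_{qp} \geq \dim\phi > p^n$ pins $\dim \phi_{qp}$ down to $p^{n+1}$, i.e.\ $h(\phi)=n+1$. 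Running this argument backwards gives the converse.

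Next I will close the cycle through (3), (5), (6). For $(2)\Rightarrow (3)$, I invoke Lemma \ref{subformtransitivity}\,(1) to produce an $F$-place $F(\phi_{qp}) \rightharpoonup F(\phi)$ and then Lemma \ref{placeisotropy}\,(1) to transport the tautological isotropy of $\phi$ over $F(\phi)$ back to $F(\phi_{qp})$. For $(3)\Rightarrow(4)$, Theorem \ref{compressibilitytheorem}\,(1) applied to the pair $(\phi,\phi_{qp})$ yields $\dim_{\mathrm{Izh}}\phi_{qp} \leq \dim \phi$, and the same dimension calculation as in the previous paragraph forces $\dim\phi_{qp}=p^{n+1}$. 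For $(2)\Rightarrow(5)$, Proposition \ref{sspneighbours} gives $\widetilde{\mathrm{sp}}(\phi) = \widetilde{\mathrm{sp}}(\phi_{qp})$, and $\widetilde{\mathrm{sp}}(\phi_{qp}) = (p^n,p^{n-1},\dots,p,1)$ is immediate from Theorem \ref{ssppfister} applied to $\phi_{qp}$. Finally, for $(5)\Leftrightarrow(6)$, I use that $\widetilde{\mathrm{sp}}(\phi)=\mathrm{sp}(\phi_1)$ together with Theorem \ref{ssppfister}: if $\phi_1$ is similar to a quasi-Pfister form then $\dim \phi_1$ is a $p$-power, and Lemma \ref{isotropytower}\,(2) forces $p^{n-1} < \dim \phi_1 < p^{n+1}$, so $\dim \phi_1=p^n$ and the splitting pattern of $\phi_1$ is the asserted geometric sequence; the reverse implication is immediate from Theorem \ref{ssppfister}. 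The implication $(5)\Rightarrow(4)$ is free, by counting entries.

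I do not expect any single hard step here; the theorem is a structural compilation, and the only real content is the dimension pigeonhole argument using that $\dim \phi_{qp}$ is a $p$-power wedged between $\dim \phi$ and $p\dim\phi$. The mild subtlety I want to be careful about is making sure each equivalence is properly closed --- in particular, that $(2)\Leftrightarrow(4)$ is genuinely a two-way dimension argument, and that $(5)\Leftrightarrow (6)$ does not silently assume $(4)$, which is why pinning $\dim\phi_1 = p^n$ via Lemma \ref{isotropytower}\,(2) is the crucial step in that direction.
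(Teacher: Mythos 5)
Your overall structure matches the paper's (same cycle of implications, same key lemmas), and almost all of your individual steps are correct, but the argument you give for $(2)\Rightarrow(3)$ has the direction of the place backwards, and this is a genuine gap.

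Lemma \ref{subformtransitivity}\,(1), applied to the inclusion $a\phi\subset\phi_{qp}$, produces an $F$-place $F(\phi_{qp})\rightharpoonup F(\phi)$. By Lemma \ref{placeisotropy}\,(1), a place $K\rightharpoonup L$ gives $i_0(\phi_L)\geq i_0(\phi_K)$; so $F(\phi_{qp})\rightharpoonup F(\phi)$ only tells you $i_0(\phi_{F(\phi)})\geq i_0(\phi_{F(\phi_{qp})})$, which does nothing toward showing $\phi_{F(\phi_{qp})}$ is isotropic. To ``transport'' isotropy from $F(\phi)$ to $F(\phi_{qp})$ you need a place going the other way, $F(\phi)\rightharpoonup F(\phi_{qp})$, and Lemma \ref{subformtransitivity} does not give that. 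Indeed, a place in that direction cannot come from the subform inclusion alone: it genuinely requires the neighbour hypothesis of (2). The correct route (and the one the paper takes) is to invoke Proposition \ref{sspneighbours}: since $\phi$ is a \emph{neighbour} of $\phi_{qp}$, $F(\phi_{qp})$ is a purely transcendental extension of $F(\phi)$, hence $F(\phi)\sim_F F(\phi_{qp})$, and then the ``in particular'' clause of Lemma \ref{placeisotropy} gives $i_0(\phi_{F(\phi_{qp})})=i_0(\phi_{F(\phi)})>0$. Note that you already cite Proposition \ref{sspneighbours} for $(2)\Rightarrow(5)$; the same proposition handles $(2)\Rightarrow(3)$ for free, which is exactly what the paper does. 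The remainder of your proposal --- the pigeonhole on $\dim\phi_{qp}$ for $(1)\Leftrightarrow(2)\Leftrightarrow(4)$, the use of Theorem \ref{compressibilitytheorem}\,(1) for $(3)\Rightarrow(4)$, and the pin-down of $\dim\phi_1=p^n$ via Lemma \ref{isotropytower}\,(2) for $(5)\Leftrightarrow(6)$ --- is correct and in line with the paper's argument, if a bit more spelled out.
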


\begin{remark} If $\phi$ is an anisotropic quasilinear $p$-form of dimension $>1$ over $F$, and $n$ is the unique nonnegative integer satisfying $p^n < \mathrm{dim}\;\phi \leq p^{n+1}$, then it follows from Lemma \ref{isotropytower} that $h(\phi) \geq n+1$. Therefore, anisotropic quasi-Pfister neighbours may be interpreted as precisely those anisotropic forms which have ``smallest possible height''. For example, every anisotropic form of height 1 is a quasi-Pfister neighbour. In particular, if $\phi$ is an anisotropic quasilinear $p$-form of dimension $>1$ over $F$, then $\phi_{h(\phi) - 1}$ is a quasi-Pfister neighbour. The following definition therefore makes sense. \end{remark}

\begin{definition} \label{quasiPfisterheight} Let $\phi$ be an anisotropic quasilinear $p$-form of dimension $>1$ over $F$. The \emph{quasi-Pfister height of $\phi$}, denoted $h_{qp}(\phi)$, is defined to be the smallest nonnegative integer $r$ such that the form $\phi_r$ is a quasi-Pfister neighbour. \end{definition}

\subsection{General subforms.} We now consider the case of general subforms. It is worth writing down the following observation.

\begin{lemma} \label{heightext} Let $\phi$ be an anisotropic quasilinear $p$-form of dimension $>1$ over $F$, and let $L$ be a field extension of $F$. Let $(F_r)$ denote the standard splitting tower of $\phi$. The following conditions are equivalent.
\begin{enumerate} \item[$\mathrm{(1)}$] $h(\phi_L) < h(\phi)$.
\item[$\mathrm{(2)}$] $\phi_r$ becomes isotropic over $L \cdot F_s$ for some $s \in [0,h(\phi))$. \end{enumerate}
\begin{proof} By Remark \ref{sspremarks} (2), the standard splitting tower of $\phi_L$ is $L$-equivalent to $(L \cdot F_r)$ (this tower having possibly shorter length than $(F_r)$). By Lemma \ref{placeisotropy}, the latter tower determines the standard splitting pattern of $\phi_L$. In particular, if $\phi_r$ remains anisotropic over $L \cdot F_r$ for each $r$, then $h(\phi_L) = h(\phi)$. Conversely, suppose that some $\phi_r$ becomes isotropic over $L \cdot F_r$. Choose $s$ to be minimal among all $r$ with this property. Then, by Corollary \ref{normdegcriterion} and Lemma \ref{isotropytower} (3), we have
\begin{equation*} h(\phi_L) = s + h((\phi_s)_{L \cdot F_s}) < s + h(\phi_s) = h(\phi), \end{equation*}
as we wanted. \end{proof} \end{lemma}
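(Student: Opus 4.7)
My plan is to exploit the fact that the standard splitting tower of $\phi_L$ is controlled (up to $L$-equivalence) by the extension of the tower $(F_r)$ to $L$, and then use the drop in norm degree as the measure of when the process genuinely terminates early.

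First, I would observe that by Remark \ref{sspremarks} (2), for each $r$ the field $L\cdot F_r$ is $L$-equivalent to $L(\phi^{\times r})$, hence to the $r$-th field in the standard splitting tower of $\phi_L$ (as long as the latter is defined). By Lemma \ref{placeisotropy}, the defect index and norm degree of any form are invariants of the $L$-equivalence class, so the ``incremented tower'' $(L\cdot F_r)$ computes the standard splitting pattern of $\phi_L$ faithfully: in particular $(\phi_r)_{L\cdot F_r}$ has the same anisotropic part, norm degree, and isotropy status as the $r$-th form appearing in the standard splitting tower of $\phi_L$ (when that form exists).

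For the direction $\lnot(2)\Rightarrow \lnot(1)$, assume that $\phi_r$ remains anisotropic over $L\cdot F_r$ for every $r\in[0,h(\phi))$. Then in particular $(\phi_r)_{L\cdot F_r}$ is anisotropic, so the splitting process for $\phi_L$ has not terminated before step $h(\phi)$; its length is therefore at least $h(\phi)$, and by Lemma \ref{isotropytower} (3) combined with Lemma \ref{sepext} (2) and Lemma \ref{placeisotropy} (2) applied to the $L$-equivalence $L\cdot F_{h(\phi)}\sim_L L(\phi^{\times h(\phi)})$, we get $h(\phi_L)\geq h(\phi)$, and of course $h(\phi_L)\leq h(\phi)$ always, giving equality.

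For the converse, pick the minimal $s$ for which $\phi_s$ becomes isotropic over $L\cdot F_s$. By minimality, the first $s$ steps of the standard splitting tower of $\phi_L$ match the first $s$ steps of $(L\cdot F_r)$ in the sense described above, so $h(\phi_L)=s+h((\phi_s)_{L\cdot F_s})$. Now Corollary \ref{normdegcriterion} yields $\mathrm{ndeg}\,(\phi_s)_{L\cdot F_s}<\mathrm{ndeg}\,\phi_s$, and Lemma \ref{isotropytower} (3) translates this norm-degree strict inequality into the height inequality $h((\phi_s)_{L\cdot F_s})<h(\phi_s)=h(\phi)-s$. Adding $s$ gives $h(\phi_L)<h(\phi)$, as required.

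The main potential obstacle is the bookkeeping in identifying the standard splitting tower of $\phi_L$ with the tower $(L\cdot F_r)$ up to $L$-equivalence, together with the additivity of heights along the tower; but both follow routinely from Remark \ref{sspremarks} (2), Lemma \ref{placeisotropy}, and the inductive definition of the standard splitting tower, so no serious difficulty should arise.
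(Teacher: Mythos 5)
Your proposal matches the paper's proof essentially step for step: both identify the standard splitting tower of $\phi_L$ term-by-term, up to $L$-equivalence, with the tower $(L\cdot F_r)$ via Remark \ref{sspremarks} (2) and Lemma \ref{placeisotropy}, and both then pass through the minimal index $s$ at which $\phi_s$ becomes isotropic over $L\cdot F_s$, using Corollary \ref{normdegcriterion} together with $h = \log_p \mathrm{ndeg}$ (Lemma \ref{isotropytower} (3)) to get the strict drop $h(\phi_L) = s + h((\phi_s)_{L\cdot F_s}) < s + h(\phi_s) = h(\phi)$. The only cosmetic difference is that your justification of the $\lnot(2)\Rightarrow\lnot(1)$ direction spells out $h(\phi_L)\le h(\phi)$ and the length lower bound separately, where the paper simply reads off $h(\phi_L)=h(\phi)$ from the equivalence of towers; this does not change the argument.
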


In view of Lemma \ref{heightext}, the following statement makes sense.

\begin{lemma} \label{ffsubformtower} Let $\phi$ and $\psi$ be anisotropic quasilinear $p$-forms of dimension $>1$ over $F$. Assume that $\psi$ is similar to a subform of $\phi$, and that $\psi_{F(\phi)}$ is anisotropic. Let $(F_r)$ be the standard splitting tower of $\psi$. Then either
\begin{enumerate} \item[$\mathrm{(1)}$] $h(\psi_{F(\phi)}) = h(\psi)$ and $\mathrm{sp}(\psi_{F(\phi)}) = \mathrm{sp}(\psi)$, or
\item[$\mathrm{(2)}$] $h(\psi_{F(\phi)}) = h(\psi)-1$ and
\begin{equation*} \mathrm{sp}(\psi_{F(\phi)}) = (\mathrm{dim}\;\psi, \mathrm{dim}\;\psi_1,...,\mathrm{dim}\;\psi_{s-1},\mathrm{dim}\;\psi_{s+1},...,\mathrm{dim}\;\psi_{h(\psi)}=1), \end{equation*}
where $s \in [1,h(\psi))$ is the least positive integer $r$ such that $\psi_r$ becomes isotropic over $F_r(\phi)$. \end{enumerate}
\begin{proof} As in the proof of Lemma \ref{heightext}, the tower $(F_r(\phi))$ computes the standard splitting pattern of $\psi_{F(\phi)}$. If $h(\psi_{F(\phi)}) = h(\psi)$, then $\mathrm{sp}(\psi_{F(\phi)}) = \mathrm{sp}(\psi)$ by Lemma \ref{heightext}. If $h(\psi_{F(\phi)}) < h(\psi)$, then some $\psi_r$ becomes isotropic over $F_r(\phi)$ by the same result. Choose $s \in [1,h(\psi))$ minimal among  those $r$ with this property, and let $\sigma = (\phi_{F_s})_{an}$. By assumption, there is $a \in F^*$ such that $\psi \subset a\phi$. It follows that $D(\psi_s) = D(\psi_{F_s}) \subset D(a\phi_{F_s}) = D(a\sigma)$, and so $\psi_s$ is similar to a subform of $\sigma$ by Corollary \ref{anisotropicclassification}. Now, by Remark \ref{ffremarks} (2), we have $F_s(\sigma) \sim_{F_s} F_s(\phi)$. In particular, $(\psi_s)_{F_s(\sigma)}$ is isotropic by Lemma \ref{placeisotropy}. By Theorem \ref{compressibilitytheorem}, we therefore have $\mathrm{dim}_{Izh}\sigma \leq \mathrm{dim}\;\psi_s$. This shows that $\psi_s$ is a neighbour of $\sigma$. In particular, $F_{s+1} = F_s(\psi_s) \sim_{F_s} F_s(\sigma) \sim_{F_s} F_s(\phi)$ by Proposition \ref{sspneighbours}. By Remark \ref{sspremarks} (2), the standard splitting tower of $(\psi_{F(\phi)})_s$ is therefore $F_s$-equivalent to $F_{s+1} \subset F_{s+2} \subset ... \subset F_{h(\psi)}$. By the minimality of $s$, it follows that
\begin{equation*} \mathrm{sp}(\psi_{F(\phi)}) = (\mathrm{dim}\;\psi, \mathrm{dim}\;\psi_1,...,\mathrm{dim}\;\psi_{s-1},\mathrm{dim}\;\psi_{s+1},...,\mathrm{dim}\;\psi_{h(\psi)}=1), \end{equation*}
and the lemma is proved. \end{proof} \end{lemma}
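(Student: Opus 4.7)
The plan is to exploit the principle established in Lemma \ref{heightext}: the tower $(F_r(\phi))$ obtained from the standard splitting tower of $\psi$ by base changing to $F(\phi)$ serves, via Remark \ref{sspremarks} (2) and Lemma \ref{placeisotropy}, as a surrogate for computing $\mathrm{sp}(\psi_{F(\phi)})$. I would first dichotomize: either none of the $\psi_r$ become isotropic over $F_r(\phi)$, in which case Lemma \ref{heightext} immediately yields case (1); or some $\psi_r$ does, and I take the least such index $s$, which by the anisotropy assumption lies in $[1,h(\psi))$. For $r < s$, the tower behaves as usual and contributes the initial segment $(\mathrm{dim}\;\psi,\mathrm{dim}\;\psi_1,\ldots,\mathrm{dim}\;\psi_{s-1})$ of $\mathrm{sp}(\psi_{F(\phi)})$.

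The heart of the argument is to show that the tail $F_{s+1} \subset F_{s+2} \subset \ldots \subset F_{h(\psi)}$ of the original standard splitting tower of $\psi$ coincides (up to $F_s$-equivalence) with the standard splitting tower of $(\psi_{F(\phi)})_s$. To achieve this, I would introduce the form $\sigma = (\phi_{F_s})_{an}$. Writing $\psi \subset a\phi$ with $a \in F^*$, inclusion of value spaces over $F_s$ and Corollary \ref{anisotropicclassification} exhibit $\psi_s$ as similar to a subform of $\sigma$. Since $F_s(\sigma) \sim_{F_s} F_s(\phi)$ by Remark \ref{ffremarks} (2), Lemma \ref{placeisotropy} shows $(\psi_s)_{F_s(\sigma)}$ is isotropic, and Theorem \ref{compressibilitytheorem} (1) then forces $\mathrm{dim}_{Izh}\sigma \leq \mathrm{dim}\;\psi_s$. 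Thus $\psi_s$ is a neighbour of $\sigma$, and Proposition \ref{sspneighbours} delivers the crucial equivalence $F_{s+1} = F_s(\psi_s) \sim_{F_s} F_s(\sigma) \sim_{F_s} F_s(\phi)$.

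With this identification, Remark \ref{sspremarks} (2) and Lemma \ref{placeisotropy} imply that $F_{s+1} \subset \ldots \subset F_{h(\psi)}$ computes $\mathrm{sp}((\psi_{F(\phi)})_s)$ correctly. Minimality of $s$ prevents any further entries from collapsing, so the remaining segment of $\mathrm{sp}(\psi_{F(\phi)})$ is precisely $(\mathrm{dim}\;\psi_{s+1},\ldots,\mathrm{dim}\;\psi_{h(\psi)} = 1)$, giving case (2).

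The main technical obstacle is the identification of $\psi_s$ as a neighbour of $\sigma$: this step is where the genuine content lives, as it invokes the nontrivial incompressibility result Theorem \ref{compressibilitytheorem} in an essential way. Everything else is a careful bookkeeping of how the surrogate tower $(F_r(\phi))$ tracks the standard splitting pattern, using the machinery of places and stable birational equivalence already in place.
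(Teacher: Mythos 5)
Your proposal is correct and follows essentially the same line of reasoning as the paper: introduce $\sigma = (\phi_{F_s})_{an}$, use the subform relation and Corollary \ref{anisotropicclassification} to see $\psi_s$ is similar to a subform of $\sigma$, invoke Theorem \ref{compressibilitytheorem} to identify $\psi_s$ as a neighbour of $\sigma$, and then obtain the key $F_s$-equivalence $F_{s+1} \sim_{F_s} F_s(\phi)$ via Proposition \ref{sspneighbours}. Your closing phrase about minimality ``preventing further collapse'' is slightly imprecise (minimality governs the initial segment; the tail is handled entirely by the equivalence $F_{s+1}\sim_{F_s}F_s(\phi)$), but the substance of the argument matches the paper's proof.
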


\begin{remark} We expect that the condition ``$\psi$ is similar to a subform of $\phi$'' in Lemma \ref{ffsubformtower} can be replaced with the weaker condition ``$\phi_{F(\psi)}$ is isotropic''. In \S 8, we will prove this in the special case where $p \leq 3$ (cf. Proposition \ref{functorialityprop}). \end{remark}

Now we can prove the following result, which (applied repeatedly) gives a description of the general relationship between the standard splitting pattern of an anisotropic form and those of its subforms.

\begin{proposition} \label{sspsubforms} Let $\phi$ and $\psi$ be anisotropic quasilinear $p$-forms of dimension $>1$ over $F$. Assume that $\psi$ is similar to a codimension 1 subform of $\phi$, and let $(F_r)$ be the standard splitting tower of $\psi$. Then either
\begin{enumerate} \item[$\mathrm{(1)}$] $h(\phi) = h(\psi) + 1$, $i_1(\phi) = 1$ and $\widetilde{\mathrm{sp}}(\phi) = \mathrm{sp}(\psi)$, or
\item[$\mathrm{(2)}$] $h(\phi) = h(\psi)$, $i_1(\phi) > 1$ and $\widetilde{\mathrm{sp}}(\phi) = \widetilde{\mathrm{sp}}(\psi)$, or
\item[$\mathrm{(3)}$] $h(\phi) = h(\psi)$, $i_1(\phi) = 1$ and
\begin{equation*} \widetilde{\mathrm{sp}}(\phi) = (\mathrm{dim}\;\psi,\mathrm{dim}\;\psi_1,...,\mathrm{dim}\;\psi_{s-1},\mathrm{dim}\;\psi_{s+1},...,\mathrm{dim}\;\psi_{h(\psi)} = 1), \end{equation*}
where $s \in [1,h(\psi))$ is the least positive integer $r$ such that $\psi_r$ becomes isotropic over $F_r(\phi)$. \end{enumerate}
\begin{proof} Suppose first that $i_1(\phi) > 1$. Then $\psi$ is a neighbour of $\phi$, and so $\widetilde{\mathrm{sp}}(\phi) = \widetilde{\mathrm{sp}}(\psi)$ by Proposition \ref{sspneighbours}. This is case (2). We may therefore assume that $i_1(\phi) = 1$. In this case, we have $\phi_1 \simeq \psi_{F(\phi)}$ by Corollary \ref{ffanisotropysubform}. Since $\widetilde{\mathrm{sp}}(\phi) = \mathrm{sp}(\phi_1)$ (cf. Remark \ref{sspremarks} (1)), we are either in case (1) or case (3) by Lemma \ref{ffsubformtower}. \end{proof} \end{proposition}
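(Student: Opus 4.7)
The proof plan is a dichotomy on the value of $i_1(\phi)$, combining the neighbour machinery of Section 6 with Lemma \ref{ffsubformtower}.

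First I would dispose of the case $i_1(\phi) > 1$. Here $\mathrm{dim}\;\psi = \mathrm{dim}\;\phi - 1 \geq \mathrm{dim}\;\phi_1 + 1 = \mathrm{dim}_{Izh}\phi$, so $\psi$ is a neighbour of $\phi$ in the sense of Definition \ref{neighbourdef}. Proposition \ref{sspneighbours} then gives $\widetilde{\mathrm{sp}}(\phi) = \widetilde{\mathrm{sp}}(\psi)$, which by comparing the lengths of the two sequences forces $h(\phi) = h(\psi)$. This is case (2).

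Next, assume $i_1(\phi) = 1$, so that $\mathrm{dim}\;\psi = \mathrm{dim}\;\phi_1$. Choose $a \in F^*$ with $a\psi \subset \phi$. Corollary \ref{ffanisotropysubform}, applied to the subform $a\psi \subset \phi$ (and noting $\mathrm{dim}(a\psi) = \mathrm{dim}\;\phi_1$), ensures that $(a\psi)_{F(\phi)}$ is anisotropic and sits inside $(\phi_{F(\phi)})_{an} = \phi_1$. Matching dimensions forces $(a\psi)_{F(\phi)} \simeq \phi_1$, so $\psi_{F(\phi)}$ is similar to $\phi_1$. Since the standard splitting pattern is invariant under scaling, we obtain $\mathrm{sp}(\psi_{F(\phi)}) = \mathrm{sp}(\phi_1) = \widetilde{\mathrm{sp}}(\phi)$ and $h(\psi_{F(\phi)}) = h(\phi) - 1$.

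At this point I would invoke Lemma \ref{ffsubformtower} for the pair $\psi$, $\phi$ (the subform condition being supplied by $a\psi \subset \phi$, and the anisotropy of $\psi_{F(\phi)}$ by the previous step). The lemma gives exactly two sub-cases: either $h(\psi_{F(\phi)}) = h(\psi)$ and $\mathrm{sp}(\psi_{F(\phi)}) = \mathrm{sp}(\psi)$, which translates into case (1) via $h(\phi) = h(\psi) + 1$ and $\widetilde{\mathrm{sp}}(\phi) = \mathrm{sp}(\psi)$; or $h(\psi_{F(\phi)}) = h(\psi) - 1$ with the splitting pattern obtained by deleting the entry $\mathrm{dim}\;\psi_s$ (for the minimal $s$ described in the statement of that lemma), which immediately yields case (3).

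The main delicate point I anticipate is the identification of $\psi_{F(\phi)}$ with $\phi_1$ up to similarity in the $i_1(\phi) = 1$ branch, which depends on carefully tracking the scalar $a$ and on noting that both anisotropic parts have the same dimension. Once this translation is in place, however, the trichotomy is simply Lemma \ref{ffsubformtower} combined with Proposition \ref{sspneighbours}, and no further work should be required.
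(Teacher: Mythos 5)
Your proof follows the same two-case argument as the paper — disposing of $i_1(\phi)>1$ via the neighbour machinery and Proposition \ref{sspneighbours}, then handling $i_1(\phi)=1$ by identifying $\psi_{F(\phi)}$ with $\phi_1$ (up to the scalar $a$) through Corollary \ref{ffanisotropysubform} and feeding the result into Lemma \ref{ffsubformtower}. The only cosmetic difference is that you track the scalar explicitly where the paper writes $\phi_1\simeq\psi_{F(\phi)}$ directly; this is harmless since the standard splitting pattern is a similarity invariant.
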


We have seen in Proposition \ref{sspneighbours} that the standard splitting pattern of an anisotropic quasilinear $p$-form is determined completely by that of any of its neighbours. Now we can describe its relation to the the standard splitting patterns of the next class of subforms, namely those of dimension $\mathrm{dim}\;\phi_1$.

\begin{corollary} \label{sspsubformsdim1} Let $\phi$ be an anisotropic quasilinear $p$-form of dimension $>1$ over $F$, and let $\psi \subset \phi$ be a subform of dimension $\mathrm{dim}\;\phi_1$. Let $(F_r)$ be the standard splitting tower of $\psi$. Then either
\begin{enumerate} \item[$\mathrm{(1)}$] $h(\phi) = h(\psi) + 1$, $i_1(\phi) = 1$ and $\widetilde{\mathrm{sp}}(\phi) = \mathrm{sp}(\psi)$, or
\item[$\mathrm{(2)}$] $h(\phi) = h(\psi)$ and 
\begin{equation*} \widetilde{\mathrm{sp}}(\phi) = (\mathrm{dim}\;\psi,\mathrm{dim}\;\psi_1...,\mathrm{dim}\;\psi_{s-1},\mathrm{dim}\;\psi_{s+1},...,\mathrm{dim}\;\psi_{h(\psi)} = 1) \end{equation*}
where $s \in [1,h(\psi))$ is the least positive integer $r$ such that $\psi_r$ becomes isotropic over $F_r(\phi)$. \end{enumerate}
\begin{proof} By Proposition \ref{sspneighbours}, we may replace $\phi$ with a minimal neighbour of itself. Everything then follows from Proposition \ref{sspsubforms}. \end{proof} \end{corollary}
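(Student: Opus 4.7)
The plan is to reduce to Proposition \ref{sspsubforms} by enlarging $\psi$ to a minimal neighbour of $\phi$. Choose any vector $v \in V_\phi \setminus V_\psi$, and let $\phi' \subset \phi$ denote the restriction of $\phi$ to the subspace $V_\psi + Fv$; then $\mathrm{dim}\;\phi' = \mathrm{dim}\;\phi_1 + 1 = \mathrm{dim}_{Izh}\phi$, so $\phi'$ is a minimal neighbour of $\phi$ in the sense of Definition \ref{neighbourdef}, and $\psi$ sits inside $\phi'$ as a codimension $1$ subform.

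By Proposition \ref{sspneighbours}, $F(\phi) \sim_F F(\phi')$ and $\widetilde{\mathrm{sp}}(\phi) = \widetilde{\mathrm{sp}}(\phi')$. Matching the lengths of the latter sequences forces $h(\phi) = h(\phi')$, and matching their first entries gives $\mathrm{dim}(\phi')_1 = \mathrm{dim}\;\phi_1 = \mathrm{dim}\;\psi$, so that $i_1(\phi') = 1$. We may therefore apply Proposition \ref{sspsubforms} to the pair $(\phi',\psi)$. Its case (2) requires $i_1(\phi') > 1$ and is hence excluded. Case (1) yields $h(\phi') = h(\psi) + 1$ together with $\widetilde{\mathrm{sp}}(\phi') = \mathrm{sp}(\psi)$, while case (3) yields $h(\phi') = h(\psi)$ together with the punctured splitting pattern formula described there.

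Translating back via the identifications $h(\phi) = h(\phi')$ and $\widetilde{\mathrm{sp}}(\phi) = \widetilde{\mathrm{sp}}(\phi')$, case (1) of Proposition \ref{sspsubforms} gives case (1) of the corollary, while case (3) gives case (2). The only subtle point concerns the threshold index $s$ in case (2), which Proposition \ref{sspsubforms} characterises via isotropy of $\psi_r$ over $F_r(\phi')$ rather than $F_r(\phi)$. Since Proposition \ref{sspneighbours} realises $F(\phi)$ as a purely transcendental extension of $F(\phi')$, this property persists after base change to $F_r$, giving $F_r(\phi) \sim_{F_r} F_r(\phi')$; Lemma \ref{placeisotropy}(1) then ensures that the two isotropy conditions coincide. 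I anticipate no serious obstacle here, as Propositions \ref{sspneighbours} and \ref{sspsubforms} have already absorbed the real work.
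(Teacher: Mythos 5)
Your approach matches the paper's: the paper's proof is the one-line remark that one should replace $\phi$ by a minimal neighbour and then invoke Proposition~\ref{sspsubforms}, and you have spelled this out in exactly that way, including the correct observation that the threshold index $s$ is insensitive to passing from $\phi$ to $\phi'$ because $F_r(\phi)\sim_{F_r}F_r(\phi')$ for every $r$ (base change of a purely transcendental extension along $F\subset F_r$ remains purely transcendental, and Lemma~\ref{placeisotropy} then identifies the two isotropy conditions).

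There is, however, one point where your write-up asserts more than the argument delivers, and it is worth flagging even though the paper's own terse proof has the same lacuna. In translating case (1) of Proposition~\ref{sspsubforms} back to $\phi$, what you actually obtain from $i_1(\phi')=1$ is information about the replacement form $\phi'$, not about $\phi$ itself. The equalities $h(\phi)=h(\phi')$ and $\widetilde{\mathrm{sp}}(\phi)=\widetilde{\mathrm{sp}}(\phi')$ carry over the conclusions $h(\phi)=h(\psi)+1$ and $\widetilde{\mathrm{sp}}(\phi)=\mathrm{sp}(\psi)$, but they say nothing about $i_1(\phi)$, so the assertion $i_1(\phi)=1$ in case~(1) of the corollary is not established. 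Indeed, as stated, that part of the corollary is false: take $\phi=\pfister{a,b}$ anisotropic (e.g.\ over $F=\mathbb{F}_2(a,b)$) and $\psi=\form{1,a}\subset\phi$. Then $\mathrm{dim}\;\phi_1=2=\mathrm{dim}\;\psi$, $h(\phi)=2$, $h(\psi)=1$, and $\widetilde{\mathrm{sp}}(\phi)=(2,1)=\mathrm{sp}(\psi)$, so case~(1) is forced, yet $i_1(\phi)=2\neq1$. The correct conclusion in case~(1) is therefore just $h(\phi)=h(\psi)+1$ and $\widetilde{\mathrm{sp}}(\phi)=\mathrm{sp}(\psi)$ (equivalently, $i_1(\phi')=1$ for the minimal neighbour $\phi'$). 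Your proof is otherwise sound and recovers everything the paper's method genuinely yields; the extra clause $i_1(\phi)=1$ should be struck from both the statement and your final paragraph.
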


\begin{remark} Proposition \ref{sspsubforms} and Corollary \ref{sspsubformsdim1} show that the standard splitting patterns of two subforms of small codimension in a given form should not be markedly different. One can hope to use this observation to obtain restrictions on the standard splitting pattern of the ambient form. \end{remark}

\section{Two comparison results} 

In this section we prove some results which allow us to make a comparison between the standard splitting pattern (or at least some component of it) of a quasilinear $p$-form over $F$ and the standard splitting patterns of the same form over some field extensions of $F$. These results in turn give restrictions on the possible values of the standard splitting pattern over the base field.

\subsection{First comparison result.} In this first subsection, we compare the standard splitting pattern of a quasilinear $p$-form $\phi$ over $F$ to its standard splitting pattern over the field $F(\phi_{qp})$. The precise result is the following.

\begin{proposition} \label{firstcomparison} Let $\phi$ be an anisotropic quasilinear $p$-form of dimension $>1$ over $F$, and let $s = h_{qp}(\phi)$ (cf. Definition \ref{quasiPfisterheight}). Assume that $\phi$ is not a quasi-Pfister neighbour (in other words, $s > 0$). Then
\begin{enumerate} \item[$\mathrm{(1)}$] $\mathrm{sp}(\phi_{F(\phi_{qp})}) = (\mathrm{dim}\;\phi, \mathrm{dim}\;\phi_1,..., \mathrm{dim}\;\phi_{s-1},\mathrm{dim}\;\phi_{s+1},...,\mathrm{dim}\;\phi_{h(\phi)} = 1)$.
\item[$\mathrm{(2)}$] $h_{qp}(\phi_{F(\phi_{qp})})< s$. \end{enumerate}
\begin{proof} Given part (1), statement (2) follows immediately from Theorem \ref{pfisterneighbourclassification} (for example, from the equivalence of parts (1) and (5)). For (1), let $(F_r)$ be the standard splitting tower of $\phi$. Since $\phi$ is not a Pfister neighbour, $\phi_{F(\phi_{qp})}$ is anisotropic by Theorem \ref{pfisterneighbourclassification}. On the other hand, we have $h(\phi_{F(\phi_{qp})}) < h(\phi)$ by Lemma \ref{normformext}. Since $\phi$ is similar to a subform of $\phi_{qp}$ (cf. Lemma \ref{normformlemma}), we can therefore apply Lemma \ref{ffsubformtower} to see that
\begin{equation*} \mathrm{sp}(\phi_{F(\phi_{qp})}) = (\mathrm{dim}\;\phi, \mathrm{dim}\;\phi_1,..., \mathrm{dim}\;\phi_{s-1},\mathrm{dim}\;\phi_{s+1},...,\mathrm{dim}\;\phi_{h(\phi)} = 1), \end{equation*}
where $s \in [1,h(\phi))$ is the least positive integer $r$ such that $\phi_r$ becomes isotropic over $F_r(\phi_{qp})$. We need to show that $s = h_{qp}(\phi)$. Equivalently, we must show that $\phi_s$ is a quasi-Pfister neighbour, but $\phi_r$ is not a quasi-Pfister neighbour for any $r < s$. But, for any $r$, we have $F_r(\phi_{qp}) \sim_{F_r} F_r((\phi_r)_{qp})$ by Lemma \ref{normformext} and Remark \ref{ffremarks} (2). In particular, $\phi_r$ can become isotropic over $F_r(\phi_{qp})$ if and only if $\phi_r$ is a quasi-Pfister neighbour by Theorem \ref{pfisterneighbourclassification} and Lemma \ref{placeisotropy}. The result follows. \end{proof} \end{proposition}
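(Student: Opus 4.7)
The plan is to deduce (1) by applying Lemma \ref{ffsubformtower} to the pair consisting of $\phi$ and its norm form $\phi_{qp}$, and then to read off (2) from (1) using Theorem \ref{pfisterneighbourclassification}. Set $K = F(\phi_{qp})$ and let $(F_r)$ denote the standard splitting tower of $\phi$.

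First I would verify the hypotheses of Lemma \ref{ffsubformtower} with $\psi = \phi$ and ambient form $\phi_{qp}$. Lemma \ref{normformlemma} (1) asserts that $\phi$ is similar to a subform of $\phi_{qp}$. Since $\phi$ is not a quasi-Pfister neighbour, the equivalence of (1) and (3) in Theorem \ref{pfisterneighbourclassification} gives that $\phi_K$ is anisotropic. On the other hand, $(\phi_{qp})_K$ is isotropic by construction, so Lemma \ref{normformext} together with Lemma \ref{isotropytower} (3) force $h(\phi_K) < h(\phi)$. We therefore land in case (2) of Lemma \ref{ffsubformtower}, obtaining
\[\mathrm{sp}(\phi_K) = (\mathrm{dim}\,\phi, \mathrm{dim}\,\phi_1, \ldots, \mathrm{dim}\,\phi_{s'-1}, \mathrm{dim}\,\phi_{s'+1}, \ldots, \mathrm{dim}\,\phi_{h(\phi)} = 1),\]
where $s' \in [1, h(\phi))$ is the minimal positive integer $r$ such that $\phi_r$ becomes isotropic over $F_r(\phi_{qp})$.

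The remaining step for (1) is to identify $s'$ with $s = h_{qp}(\phi)$. An easy induction gives $\phi_r \simeq (\phi_{F_r})_{an}$, and Lemma \ref{normformext} then shows that $(\phi_r)_{qp}$ is the anisotropic part of $(\phi_{qp})_{F_r}$. Combined with Remark \ref{ffremarks} (2), this yields the $F_r$-equivalence $F_r(\phi_{qp}) \sim_{F_r} F_r((\phi_r)_{qp})$. By Lemma \ref{placeisotropy}, the isotropy of $\phi_r$ over $F_r(\phi_{qp})$ is equivalent to its isotropy over $F_r((\phi_r)_{qp})$, which in turn, by Theorem \ref{pfisterneighbourclassification} applied to $\phi_r$, is equivalent to $\phi_r$ being a quasi-Pfister neighbour. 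The least such $r$ is $h_{qp}(\phi)$ by definition, proving (1). For (2), I would observe that by (1), the tail of $\mathrm{sp}(\phi_K)$ at position $s-1$ satisfies $\widetilde{\mathrm{sp}}((\phi_K)_{s-1}) = (\mathrm{dim}\,\phi_{s+1}, \ldots, 1) = \widetilde{\mathrm{sp}}(\phi_s)$. Since $\phi_s$ is a quasi-Pfister neighbour, the equivalence of (1) and (5) in Theorem \ref{pfisterneighbourclassification} forces this sequence to be $(p^n, p^{n-1}, \ldots, p, 1)$ for the appropriate $n$, and the reverse direction of the same equivalence then shows $(\phi_K)_{s-1}$ is itself a quasi-Pfister neighbour, so $h_{qp}(\phi_K) \leq s - 1 < s$.

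The main technical obstacle is the middle step: translating the geometric condition ``$\phi_r$ becomes isotropic over $F_r(\phi_{qp})$'' produced by Lemma \ref{ffsubformtower} into the intrinsic condition ``$\phi_r$ is a quasi-Pfister neighbour''. This requires the compatibility $F_r(\phi_{qp}) \sim_{F_r} F_r((\phi_r)_{qp})$, which reflects the good interaction between the norm form construction and the operation of passing up the generic splitting tower; once this is in hand, both parts of the proposition follow by routine invocation of Theorem \ref{pfisterneighbourclassification}.
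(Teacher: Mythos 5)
Your proof is correct and takes essentially the same route as the paper's: verify the hypotheses of Lemma \ref{ffsubformtower} with $\psi = \phi$ and ambient form $\phi_{qp}$, and then identify the dropping index with $h_{qp}(\phi)$ via the key equivalence $F_r(\phi_{qp}) \sim_{F_r} F_r((\phi_r)_{qp})$, which you derive (as the paper does) from Lemma \ref{normformext}, Remark \ref{ffremarks}~(2), Lemma \ref{placeisotropy}, and Theorem \ref{pfisterneighbourclassification}. The only nuance worth noting is in your deduction of (2): invoking the equivalence of (1) and (5) in Theorem \ref{pfisterneighbourclassification} requires checking that the exponent $n$ in the sequence $(p^n,\ldots,p,1) = \widetilde{\mathrm{sp}}((\phi_K)_{s-1})$ matches the one determined by $\mathrm{dim}\,(\phi_K)_{s-1} = \mathrm{dim}\,\phi_{s-1}$ (it does, since $h((\phi_K)_{s-1}) = n+1$ and the height always bounds that exponent from above), or one can avoid the issue entirely by passing through condition (6) instead: $((\phi_K)_{s-1})_1$ has splitting pattern $(p^n,\ldots,p,1)$, hence is similar to a quasi-Pfister form by Theorem \ref{ssppfister}.
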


By a repeated application of Proposition \ref{firstcomparison}, we obtain the following corollary, which suggests the possibility of an ``inductive'' approach to the study of the higher defect indices.

\begin{corollary} \label{shiftinglemma} Let $\phi$ be an anisotropic quasilinear $p$-form of dimension $>1$ over $F$. Assume that $\phi$ is not a quasi-Pfister neighbour (that is, $h_{qp}(\phi)>0$), and let $s \in [1,h_{qp}(\phi)]$. Then there exists a field extension $\widetilde{F}$ of $F$ such that
\begin{enumerate} \item[$\mathrm{(1)}$] $\phi_{\widetilde{F}}$ is anisotropic.
\item[$\mathrm{(2)}$] $i_r(\phi_{\widetilde{F}}) = i_r(\phi)$ for all $r < s$.
\item[$\mathrm{(3)}$] $h_{qp}(\phi_{\widetilde{F}}) < s$. \end{enumerate} \end{corollary}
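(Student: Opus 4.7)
The plan is to obtain $\widetilde{F}$ by iterating Proposition \ref{firstcomparison}, setting $F^{(0)} = F$ and $\phi^{(0)} = \phi$, and then recursively defining
\[ F^{(j+1)} = F^{(j)}\bigl((\phi^{(j)})_{qp}\bigr), \qquad \phi^{(j+1)} = (\phi^{(j)})_{F^{(j+1)}} = \phi_{F^{(j+1)}}, \]
as long as $\phi^{(j)}$ is not a quasi-Pfister neighbour, i.e.\ as long as $h_j \coloneqq h_{qp}(\phi^{(j)}) > 0$. The key points supplied by a single application of Proposition \ref{firstcomparison} to $\phi^{(j)}$ (which requires $h_j > 0$) are that $\phi^{(j+1)}$ is again anisotropic of dimension $\dim\phi$, that its standard splitting pattern is obtained from that of $\phi^{(j)}$ by deleting the entry in position $h_j$, and that $h_{j+1} < h_j$.

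Since $(h_j)$ is a strictly decreasing sequence of nonnegative integers starting from $h_0 = h_{qp}(\phi) \geq s \geq 1$, there is a least index $J \geq 1$ such that $h_J < s$; by the minimality of $J$ we have $h_j \geq s$ for every $j < J$. I then set $\widetilde{F} = F^{(J)}$. Property (1) is immediate by induction from the anisotropy clause of Proposition \ref{firstcomparison}, while property (3) is exactly $h_J < s$. For property (2), note that the ``deletion'' description of $\mathrm{sp}(\phi^{(j+1)})$ in Proposition \ref{firstcomparison} (1) means that $\dim(\phi^{(j+1)})_r = \dim(\phi^{(j)})_r$ for all $r < h_j$, so $i_r(\phi^{(j+1)}) = i_r(\phi^{(j)})$ for $r < h_j$; since $h_j \geq s$ for $j < J$, this identity holds for every $r < s$, and an induction on $j$ yields $i_r(\phi^{(J)}) = i_r(\phi)$ for all $r < s$.

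There is no real obstacle here: the whole argument is essentially a bookkeeping exercise on top of Proposition \ref{firstcomparison}. The only mild subtlety is matching the index ``$s$'' appearing in the statement of the corollary with the varying quantities $h_j = h_{qp}(\phi^{(j)})$ during the iteration, and confirming that the defect indices $i_1,\dots,i_{s-1}$ are preserved at every step because each deletion occurs at a position $h_j \geq s$ that lies beyond the range of interest.
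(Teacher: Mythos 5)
Your proof is correct and takes essentially the same route as the paper: the paper states the corollary with only the remark that it follows ``by a repeated application of Proposition \ref{firstcomparison}'', and your argument is precisely that iteration, carried out explicitly with the bookkeeping on the sequence $h_j = h_{qp}(\phi^{(j)})$ and the observation that each deletion occurs at position $h_j \geq s$ and hence leaves $i_1,\dots,i_{s-1}$ untouched.
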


As a first step, we obtain the following result which was already proved in \cite{Scully}.

\begin{corollary}[{\citep[Proposition 6.6]{Scully}}] \label{twistedpfister} Let $\phi$ be an anisotropic quasilinear $p$-form over $F$. Then there exists a field extension $\widetilde{F}$ of $F$ such that $\phi_{\widetilde{F}}$ is an anisotropic quasi-Pfister neighbour.
\begin{proof} If $\phi$ is a quasi-Pfister neighbour, there is nothing to prove. Otherwise, the claim follows from Corollary \ref{shiftinglemma} by taking $s =1$. \end{proof} \end{corollary}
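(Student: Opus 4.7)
The plan is to deduce the result directly from Corollary \ref{shiftinglemma} applied with the parameter $s=1$. First I handle the trivial case: if $\phi$ is already a quasi-Pfister neighbour, I simply take $\widetilde{F}=F$ and there is nothing to prove. So I may assume $h_{qp}(\phi)>0$, which in particular ensures that $\mathrm{dim}\;\phi>1$ and that $\phi_{qp}$ together with the relevant function fields are defined.

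Next, applying Corollary \ref{shiftinglemma} with $s=1$ produces a field extension $\widetilde{F}/F$ satisfying three properties: $\phi_{\widetilde{F}}$ is anisotropic; the condition $i_r(\phi_{\widetilde{F}}) = i_r(\phi)$ for $r<1$ is vacuous (the only possibility $r=0$ just re-asserts anisotropy); and, crucially, $h_{qp}(\phi_{\widetilde{F}})<1$. Since the quasi-Pfister height is a non-negative integer, this forces $h_{qp}(\phi_{\widetilde{F}})=0$, which by Definition \ref{quasiPfisterheight} means precisely that $(\phi_{\widetilde{F}})_0 = \phi_{\widetilde{F}}$ is a quasi-Pfister neighbour. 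That is exactly the claim.

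The entire content is therefore transferred to Corollary \ref{shiftinglemma}, which itself is an iteration of Proposition \ref{firstcomparison}. If I wanted to spell the mechanism out directly, the natural approach would be to set $\widetilde{F}_0 := F(\phi_{qp})$ and iterate: by Theorem \ref{pfisterneighbourclassification} (via the equivalence $(1) \Leftrightarrow (3)$), $\phi$ stays anisotropic over $\widetilde{F}_0$ precisely because $\phi$ is not a quasi-Pfister neighbour over $F$, while by Lemma \ref{normformext} the norm degree strictly drops. Proposition \ref{firstcomparison} refines this to the statement that the quasi-Pfister height itself strictly decreases after the extension. Iterating at most $h_{qp}(\phi)$ times therefore produces a field over which $\phi$ remains anisotropic and has quasi-Pfister height zero, as required. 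The main (already handled) obstacle in this chain is Proposition \ref{firstcomparison}, which requires the careful analysis of how the standard splitting pattern propagates through the extension $F \subset F(\phi_{qp})$, together with Lemma \ref{ffsubformtower} describing the splitting of an anisotropic form over the function field of a form containing it.
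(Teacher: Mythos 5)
Your proof is correct and takes exactly the same route as the paper: handle the trivial case $\widetilde{F}=F$ when $\phi$ is already a quasi-Pfister neighbour, then invoke Corollary \ref{shiftinglemma} with $s=1$ and read off $h_{qp}(\phi_{\widetilde{F}})=0$. The extra paragraph unwinding the iteration of Proposition \ref{firstcomparison} is a faithful account of the mechanism behind Corollary \ref{shiftinglemma}, but is not a different argument.
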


In particular, we recall that this corollary gives the following restriction on the possible values of the invariant $i_1$, which in the case where $p=2$ was first proved by D. Hoffmann and A. Laghribi in \cite{HoffmannLaghribi2}.

\begin{corollary}[{\citep[Corollary 6.8]{Scully}}] \label{Hoffmannbound} Let $\phi$ be an anisotropic quasilinear $p$-form of dimension $>1$ over $F$, and write $\mathrm{dim}\;\phi = p^n + m$ for uniquely determined integers $n \geq 0$ and $m \in [1,p^{n+1} - p^n]$. Then $i_1(\phi) \leq m$.
\begin{proof} Let $\widetilde{F}$ be as in Corollary \ref{twistedpfister}. Then $i_0(\phi_{\widetilde{F}(\phi)}) = i_1(\phi_{\widetilde{F}}) = m$ by Theorem \ref{pfisterneighbourclassification}. Since $F(\phi)$ is a subfield of $\widetilde{F}(\phi)$, the result follows immediately. \end{proof} \end{corollary}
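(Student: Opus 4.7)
The strategy is to reduce to the case where $\phi$ is itself a quasi-Pfister neighbour, since in that case the value of $i_1$ can be read off directly from the standard splitting pattern. The tool that enables this reduction is already at hand: Corollary \ref{twistedpfister} states that any anisotropic quasilinear $p$-form admits a field extension over which it becomes an anisotropic quasi-Pfister neighbour. So the first step is to apply that corollary to obtain a field extension $\widetilde{F}/F$ with $\phi_{\widetilde{F}}$ anisotropic and a quasi-Pfister neighbour. Note that $\dim \phi_{\widetilde{F}} = \dim \phi = p^n + m$.

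The second step is to compute $i_1(\phi_{\widetilde{F}})$ exactly using the machinery already developed for quasi-Pfister neighbours. By Theorem \ref{pfisterneighbourclassification} (equivalence of (1) and (5)), we have
\begin{equation*}
\widetilde{\mathrm{sp}}(\phi_{\widetilde{F}}) = (p^n, p^{n-1}, \ldots, p, 1),
\end{equation*}
so that $\dim(\phi_{\widetilde{F}})_1 = p^n$. Because $\phi_{\widetilde{F}}$ is anisotropic, this gives
\begin{equation*}
i_1(\phi_{\widetilde{F}}) = \dim \phi_{\widetilde{F}} - \dim(\phi_{\widetilde{F}})_1 = (p^n + m) - p^n = m.
\end{equation*}

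The third and final step is to transfer this equality into an inequality for $\phi$ itself. By the definition of the first higher defect index, $i_1(\phi) = i_0(\phi_{F(\phi)})$ and $i_1(\phi_{\widetilde{F}}) = i_0(\phi_{\widetilde{F}(\phi)})$. The natural inclusion $F(\phi) \subset \widetilde{F}(\phi)$ coming from extension of scalars, together with the elementary fact that the defect index can only grow upon enlarging the base field, yields
\begin{equation*}
i_1(\phi) = i_0(\phi_{F(\phi)}) \leq i_0(\phi_{\widetilde{F}(\phi)}) = i_1(\phi_{\widetilde{F}}) = m,
\end{equation*}
which is the desired bound. There is no genuine obstacle here: the nontrivial content has been packaged into Corollary \ref{twistedpfister} (itself an application of the ``shifting lemma'' Corollary \ref{shiftinglemma}), and the present argument is just a comparison together with the explicit splitting pattern of a quasi-Pfister neighbour.
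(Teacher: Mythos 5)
Your proposal is correct and is essentially the paper's own proof: both pass to $\widetilde{F}$ via Corollary \ref{twistedpfister}, read off $i_1(\phi_{\widetilde{F}}) = m$ from Theorem \ref{pfisterneighbourclassification}, and use $F(\phi) \subset \widetilde{F}(\phi)$ together with monotonicity of the defect index under field extension. You have merely spelled out the intermediate computations that the paper leaves implicit.
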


In view of Corollary \ref{Hoffmannbound}, it makes sense to introduce the following definition.

\begin{definition}[{cf. \citep[\S 7.6]{Hoffmann2}}] Let $\phi$ be an anisotropic quasilinear $p$-form of dimension $>1$ over $F$, and write $\mathrm{dim}\;\phi = p^n + m$ for uniquely determined integers $n \geq 0$ and $m \in [1,p^{n+1} - p^n]$. We say that $\phi$ has \emph{maximal splitting} if $i_1(\phi) = m$. \end{definition}

By Theorem \ref{pfisterneighbourclassification}, any anisotropic quasi-Pfister neighbour has maximal splitting. It is an interesting problem to determine conditions under which the converse holds. For example, Theorem \ref{ssppfister} shows that the converse holds for forms whose dimension is a power of $p$. In \cite{Hoffmann2}, it is suggested that the property of being an anisotropic quasi-Pfister neighbour is completely characterised by the property of maximal splitting for forms whose dimension is ``sufficiently close to a power of $p$''. More precisely, we ask the following questions.

\begin{questions}[{cf. \citep[Remark 7.32]{Hoffmann2}}] \label{maxsplittingquestions} Let $\phi$ be an anisotropic quasilinear $p$-form of dimension $>p$ over $F$, and write $\mathrm{dim}\;\phi = p^n + m$ for uniquely determined integers $n \geq 1$ and $m \in [1,p^{n+1} - p^n]$. Suppose that $\phi$ has maximal splitting.
\begin{enumerate} \item[$\mathrm{(1)}$] Assume that $p=2$ and $\mathrm{dim}\;\phi > 5$. If $m > 2^{n-2}$, must $\phi$ be a quasi-Pfister neighbour?
\item[$\mathrm{(2)}$] Assume that $p>2$. If $m>p^{n-1}$, must $\phi$ be a quasi-Pfister neighbour? \end{enumerate} \end{questions}

\begin{remark} In all dimensions not considered in Questions \ref{maxsplittingquestions} (1) and (2), one can easily construct anisotropic forms with maximal splitting which are not quasi-Pfister neighbours (cf. \citep[Example 7.31]{Hoffmann2}). A positive answer in either case would therefore give a complete solution to the problem of finding dimension-theoretic conditions under which the property of being a quasi-Pfister neighbour is completely characterised by maximal splitting. \end{remark}

Note here that Question \ref{maxsplittingquestions} is a direct analogue of Conjecture \ref{maxsplittingconjecture}. While the latter conjecture remains wide open, there is substantial evidence for its truth, most notably in the motivic approach of A. Vishik (cf. \cite{IzhboldinVishik}). By direct analogy, it therefore seems reasonable to expect a positive answer to Question \ref{maxsplittingquestions} (1) (and in fact, we will show later that this is indeed the case). The evidence for Question \ref{maxsplittingquestions} (2) is much weaker. Nevertheless, we can now prove the following partial result.

\begin{proposition} \label{maxsplittingallp} Let $\phi$ be an anisotropic quasilinear $p$-form over $F$ such that $p^{n+1} - p^{n-1} \leq \mathrm{dim}\;\phi \leq p^{n+1}$ for some positive integer $n$. If $\phi$ has maximal splitting, then $\phi$ is a quasi-Pfister neighbour. \end{proposition}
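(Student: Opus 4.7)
My approach is to directly bound $\mathrm{ndeg}(\phi)$ by exhibiting enough $\pfister{a}$-divisible subforms of $\phi$. The boundary case $\dim\phi = p^{n+1}$ falls immediately to Theorem \ref{ssppfister}: maximal splitting gives $\dim\phi_1 = p^n = \tfrac{1}{p}\dim\phi$, so $\phi$ is similar to a quasi-Pfister form, which is trivially a quasi-Pfister neighbour of itself.

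For $\dim\phi < p^{n+1}$, I first rescale $\phi$ (which preserves similarity-class invariants such as the quasi-Pfister neighbour property and maximal splitting) to assume $1 \in D(\phi)$, and set $m = i_1(\phi) = \dim\phi - p^n$, so the dimension hypothesis becomes $m \geq p^{n-1}(p^2-p-1)$. For each $a \in D(\phi)\setminus F^p$, the binary subform $\form{1,a} \subset \phi$ has function field $F_a$, so Lemma \ref{subformtransitivity} produces an $F$-place $F(\phi)\rightharpoonup F_a$, and Lemma \ref{placeisotropy} then gives $i_0(\phi_{F_a}) \geq i_0(\phi_{F(\phi)}) = m$. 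A direct algebraic manipulation shows that $m \geq p^{n-1}(p^2-p-1)$ is equivalent to the inequality $(p^2-p-1)m - (p^2-2p)(p^n+m) \geq p^{n-1}$, so Lemma \ref{weirdlemma} applies to produce a form $\tau_a$ of dimension $p^{n-1}$ over $F$ with $\pfister{a} \otimes \tau_a \subset \phi$. Since $\dim(\pfister{a} \otimes \tau_a) = p^n = \dim\phi_1$, Corollary \ref{ffanisotropysubform} ensures that $(\pfister{a} \otimes \tau_a)_{F(\phi)}$ is anisotropic and embeds into $\phi_1$, and equal dimensions force an isomorphism. Hence $\phi_1$ is divisible by $\pfister{a}$ over $F(\phi)$, for every $a \in D(\phi)\setminus F^p$.

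The argument closes with a dimension count. Divisibility of $\phi_1$ by $\pfister{a}$ means that the $F(\phi)^p$-subspace $D(\phi_1) \subset F(\phi)$ is stable under multiplication by $a$; running over all $a \in D(\phi)\setminus F^p$, adding the trivial cases $a \in F^p$, and iterating, we see that $D(\phi_1)$ is stable under multiplication by every element of the ring $F^p[D(\phi)] = D(\phi_{qp})$. Because $1 \in D(\phi_1)$, this forces $D(\phi_{qp}) \subset D(\phi_1)$ inside $F(\phi)$, and hence $F(\phi)^p \cdot D(\phi_{qp}) \subset D(\phi_1)$. By Lemma \ref{valuesdimension} the right-hand side has $F(\phi)^p$-dimension $\dim\phi_1 = p^n$, while by Lemma \ref{normformext} the left-hand side equals $D((\phi_1)_{qp})$, whose $F(\phi)^p$-dimension is $\mathrm{ndeg}(\phi_{F(\phi)}) = \tfrac{1}{p}\mathrm{ndeg}(\phi)$ (this last equality coming from Lemma \ref{ffisotropy} together with the isotropy of $\phi_{F(\phi)}$). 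Therefore $\mathrm{ndeg}(\phi) \leq p^{n+1}$; combined with the trivial inequality $\mathrm{ndeg}(\phi) \geq p^{n+1}$ (forced by $\dim\phi > p^n$), this yields $\mathrm{ndeg}(\phi) = p^{n+1}$, so $\phi$ is a quasi-Pfister neighbour.

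The main obstacle is recognising Lemma \ref{weirdlemma} as the right algebraic tool and spotting the arithmetic coincidence that the hypothesis $\dim\phi \geq p^{n+1} - p^{n-1}$ is precisely the condition needed to extract a $\pfister{a}$-divisible subform of $\phi$ of dimension exactly $p^n$: anything smaller would fail to match $\dim\phi_1$ and prevent the identification of $\phi_1$ via Corollary \ref{ffanisotropysubform}. Once this numerical matching is in hand, the remaining argument is a clean comparison of $F^p$- and $F(\phi)^p$-dimensions carried out with the basic machinery already developed in Sections 2--4.
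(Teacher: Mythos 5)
Your proof is correct, and it takes a somewhat different route from the paper's, although the two share the same central engine: Lemma \ref{weirdlemma}, applied with the arithmetic coincidence that the hypothesis $\dim\phi \geq p^{n+1}-p^{n-1}$ is exactly enough to produce $\pfister{a}$-divisible subforms of dimension $p^n$ inside $\phi$, which then get identified with $\phi_1$ via Corollary \ref{ffanisotropysubform}.

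The genuine divergence is in \emph{which} elements $a$ are fed into Lemma \ref{weirdlemma}, and in how the final dimension count is organised. The paper first passes to a purely transcendental extension and invokes Lemma \ref{maxsplittinglemma} to manufacture $h(\phi)$ elements $a_1,\dots,a_{h(\phi)}$ with $\pfister{a_1,\dots,a_{h(\phi)}}$ anisotropic and $i_0(\phi_{F_{a_i}}) = i_1(\phi)$; the conclusion is then reached by showing that $\pfister{a_1,\dots,a_{h(\phi)}}_{F(\phi)}$ is similar to a subform of $\phi_1$, which forces $h(\phi) \leq n+1$. You instead stay over $F$ (up to a rescaling so that $1 \in D(\phi)$) and run Lemma \ref{weirdlemma} over \emph{every} $a \in D(\phi)\setminus F^p$, using Lemma \ref{subformtransitivity} and Lemma \ref{placeisotropy} to get $i_0(\phi_{F_a}) \geq i_1(\phi)$ directly. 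This gives stability of $D(\phi_1)$ under multiplication by the subring $F^p[D(\phi)] = D(\phi_{qp})$, and the closing dimension count bounds $\mathrm{ndeg}(\phi)$ rather than $h(\phi)$ (the two bounds being equivalent via Lemma \ref{isotropytower}(3)). Your route is cleaner in that it dispenses with Lemma \ref{maxsplittinglemma} and with the auxiliary purely transcendental extension; the paper's route gives the slightly stronger intermediate statement that $\phi_1$ is actually similar to an anisotropic quasi-Pfister form coming from a single tuple. Both are valid proofs of the proposition.
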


First, we will need a lemma.

\begin{lemma} \label{maxsplittinglemma} Let $\phi$ be an anisotropic quasilinear $p$-form of dimension $>1$ over $F$, and let $h = h(\phi)$. After replacing $F$ by a purely transcendental extension of itself, there exist elements $a_1,...,a_h \in F$ such that
\begin{enumerate} \item[$\mathrm{(1)}$] The quasi-Pfister form $\pfister{a_1,...,a_h}$ over $F$ is anisotropic.
\item[$\mathrm{(2)}$] $i_0(\phi_{F_{a_i}}) = i_1(\phi)$ for all $i \in [1,h]$. \end{enumerate}
\begin{remark} By Lemma \ref{sepext}, the integer $i_1(\phi)$ is stable under making purely transcendental extensions of $F$. \end{remark}
\begin{proof} Suppose that for $r \in [0,h)$, we have constructed (after possibly replacing $F$ by a purely transcendental extension of itself) elements $a_1,...,a_r \in F$ such that
\begin{itemize} \item $\pi = \pfister{a_1,...,a_r}$ is anisotropic.
\item $i_0(\phi_{F_{a_i}}) = i_1(\phi)$ for all $i \in [1,r]$. \end{itemize}
Let us construct the next element $a_{r+1}$. First, by Remark \ref{ffremarks} (1), we can write $F(\phi) = F(X)_{u}$, where $F(X)$ is a purely transcendental extension of $F$, and $u \in F(X) \setminus F(X)^p$. Since $F(X)_{a_i}$ is a purely transcendental extension of $F_{a_i}$, we have $i_0(\phi_{F(X)_{a_i}}) = i_0(\phi_{F_{a_i}}) = i_1(\phi)$ for all $i \leq r$ by Lemma \ref{sepext}. Moreover, $i_0(\phi_{F(X)_u}) = i_1(\phi)$ by definition. If we can show that the quasi-Pfister form $\pi_{F(X)} \otimes \pfister{u}$ is anisotropic, then we may replace $F$ by $F(X)$, put $a_{r+1} = u$, and conclude by induction. So, suppose that $\pi_{F(X)} \otimes \pfister{u}$ is isotropic. Then $\pi_{F(\phi)}$ is isotropic by Lemma \ref{pinsepisotropy} (1). But by Proposition \ref{heightfunctoriality}, this implies that $h = h(\phi) \leq h(\pi) = r$, contradicting the choice of $r$. The lemma is proved. \end{proof} \end{lemma}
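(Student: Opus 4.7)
The plan is to proceed by induction on $r \in [0,h]$, building the elements $a_1, \ldots, a_r$ one at a time, enlarging $F$ by a purely transcendental extension at each stage as necessary. The base case $r = 0$ is vacuous (the empty Pfister form is $\form{1}$, which is anisotropic). For the induction step, suppose the elements $a_1, \ldots, a_r$ with $r < h$ have already been constructed over the current field $F$, so that $\pi_r = \pfister{a_1, \ldots, a_r}$ is anisotropic and $i_0(\phi_{F_{a_i}}) = i_1(\phi)$ for $i \leq r$. I want to produce $a_{r+1}$.

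The key observation is that, by Remark \ref{ffremarks} (1), the field $F(\phi)$ can be written as $K_u$, where $K$ is a purely transcendental extension $F(X)$ of $F$ and $u \in K \setminus K^p$. By definition $i_0(\phi_{F(\phi)}) = i_1(\phi)$, so $u$ has exactly the property we need for the next slot of the Pfister form. First I replace $F$ by $K = F(X)$; by Lemma \ref{sepext} this preserves the anisotropy of $\pi_r$ and each $i_0(\phi_{F_{a_i}})$ (since $K_{a_i}$ is purely transcendental over $F_{a_i}$ and hence a separable extension). I then set $a_{r+1} := u$, and the identification $K_u = F(\phi)$ immediately gives $i_0(\phi_{K_{a_{r+1}}}) = i_1(\phi)$.

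The only nontrivial point is verifying that $\pi_{r+1} := \pi_r \otimes \pfister{u}$ is anisotropic over $K$. Suppose for contradiction that it is isotropic. By Lemma \ref{pinsepisotropy} (1) applied to the form $\pi_r$ and the element $u$, we get $i_0((\pi_r)_{K_u}) \geq 1$, that is, $\pi_r$ becomes isotropic over $K_u = F(\phi)$. Proposition \ref{heightfunctoriality} then forces $h(\pi_r) \geq h(\phi) = h$. On the other hand, since $\pi_r$ is an anisotropic $r$-fold quasi-Pfister form, Theorem \ref{ssppfister} (together with Lemma \ref{isotropytower} (3)) gives $h(\pi_r) = r < h$, a contradiction. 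This completes the induction.

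The main point of the argument is the choice of the next element $a_{r+1}$, which is dictated by the description of $F(\phi)$ as a degree $p$ purely inseparable extension of a purely transcendental extension of $F$; the functoriality of height (Proposition \ref{heightfunctoriality}) is what prevents the Pfister form from degenerating, and this is really the only step where something could go wrong. Everything else is a routine application of the already-established behaviour of quasilinear $p$-forms under purely transcendental and purely inseparable extensions.
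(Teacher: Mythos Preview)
Your proof is correct and follows essentially the same route as the paper's: the inductive construction of the $a_i$, the choice of $a_{r+1}$ as the element $u$ realising $F(\phi)$ as $F(X)_u$, and the use of Lemma \ref{pinsepisotropy} (1) together with Proposition \ref{heightfunctoriality} to rule out isotropy of $\pi_{r+1}$ are exactly the steps in the original argument. The only cosmetic difference is that you cite Theorem \ref{ssppfister} for $h(\pi_r)=r$, whereas the paper leaves this implicit.
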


\begin{proof}[Proof of Proposition \ref{maxsplittingallp}] By Theorem \ref{pfisterneighbourclassification}, the property of being a quasi-Pfister neighbour depends only on the height of the given form. Since the height is invariant under separable extensions (cf. Lemmas \ref{sepext} (2) and \ref{isotropytower} (3)), we are free to make purely transcendental extensions of the base field. By Lemma \ref{maxsplittinglemma}, we may therefore assume that there exist $a_1,...,a_h \in F$ (where $h = h(\phi)$) such that
\begin{itemize} \item $\pi = \pfister{a_1,...,a_h}$ is anisotropic.
\item $i_0(\phi_{F_{a_i}}) = i_1(\phi)$ for all $i \in [1,h]$. \end{itemize}
Now, write $\mathrm{dim}\;\phi = p^{n+1} - p^{n-1} + s$ for some nonnegative integer $s$. Since $\phi$ has maximal splitting, $i_1(\phi) = (p^{n+1} - p^{n-1} - p^n + s)$. Since $(p^2 - p - 1)>(p^2 - 2p)$, we have
\begin{align*} (p^2 - p - 1)i_0(\phi_{F_{a_i}}) - (p^2 - p)\mathrm{dim}\;\phi &= (p^2 - p - 1)(p^{n+1} - p^{n-1} - p^n +s)\\
                     &\;\;\;\; - (p^2-2p)(p^{n+1} - p^{n-1} + s)\\
                     & \geq  (p^2 - p - 1)(p^{n+1} - p^{n-1} - p^n)\\
                     &\;\;\;\; -(p^2 - 2p)(p^{n+1} - p^{n-1})\\
                     &= p^{n-1} \end{align*}
for all $i \in [1,h]$. By Lemma \ref{weirdlemma}, for each $i$ we have a form $\tau_i$ of dimension $p^{n-1}$ such that $\psi_i = \pfister{a_i} \otimes \tau_i \subset \phi$. Since $\mathrm{dim}\;\phi_1 = p^n = \mathrm{dim}\;\psi_i$, we have $(\psi_i)_{F(\phi)} \simeq \phi_1$ for all $i$ by Corollary \ref{ffanisotropysubform}. In particular, $\phi_1$ is divisible by $\pfister{a_i}_{F(\phi)}$ for all $i$. Since $D(\pfister{a_i}_{F(\phi)})$ is a field (cf. Lemma \ref{Pfisterisotropy} (1)), we therefore have $a_iD(\phi_1) = D(\phi_1)$ for all $i$. If additionally $1 \in D(\phi_1)$, then it follows that $F(\phi)^p(a_1,...,a_h) \subset D(\phi_1)$. In general, we can multiply through by a scalar to get $F(\phi)^p(a_1,...,a_h) \subset aD(\phi_1)$ for some $a \in F(\phi)^*$. By Corollary \ref{anisotropicclassification}, this means that $\pi_{F(\phi)}$ is similar to a subform of $\phi_1$. Since both forms have the same dimension, $\phi_1$ is in fact similar to $\pi_{F(\phi)}$. But if $\phi_1$ is similar to a quasi-Pfister form, then $\phi$ must itself be a quasi-Pfister neighbour by Theorem \ref{pfisterneighbourclassification}. This completes the proof. \end{proof}

\begin{remark} Proposition \ref{maxsplittingallp} improves \citep[Corollary 7.29]{Hoffmann2}, which required the stronger assumption that $p^{n+1} - p < \mathrm{dim}\;\phi \leq p^{n+1}$. We will give a positive answer to Question \ref{maxsplittingquestions} (1) in full generality in \S 9. \end{remark}

\subsection{Second comparison result.} We now apply the results of \S 5 in a slightly different direction to produce further comparison results for higher defect indices. We begin by remarking that if $\phi$ and $\psi$ are two quasilinear $p$-forms of dimension $>1$ over $F$, and if the field $F(\phi \times \psi)$ is defined, then it is canonically isomorphic to both $F(\phi)(\psi)$ and $F(\psi)(\phi)$. This fact will be employed frequently below. The basic observation is the following.

\begin{proposition} \label{secondcomparisonbasic} Let $\phi$, $\psi$ and $\sigma$ be anisotropic quasilinear $p$-forms of dimension $>1$ over $F$. Then
\begin{equation*} i_0(\phi_{F(\psi \times \sigma)}) - i_0(\phi_{F(\psi)}) \geq \mathrm{min} \lbrace i_0(\phi_{F(\sigma)}), [\frac{\mathrm{dim}\;\psi_1}{p}]\rbrace. \end{equation*}
\begin{proof} By Lemma \ref{sepext}, the whole statement is stable under making purely transcendental extensions of $F$. In view of Remark \ref{ffremarks} (1), we may therefore assume that $\sigma = \form{1,a}$ for some $a \in F^*$, so that $F(\sigma) = F_a$. Let $s$ denote the integer on the right hand side of the inequality in the statement of the Proposition. Then, in particular, we have $i_0(\phi_{F(\sigma)}) \geq s$. By Lemma \ref{pinsepisotropysubform}, there exists a subform $\tau \subset \phi$ such that $\mathrm{dim}\;\tau \leq ps$ and $i_0(\tau_{F(\sigma)}) \geq s$. On the other hand, by the definition of $s$ we have
\begin{equation*} \mathrm{dim}\;\tau \leq ps \leq \mathrm{dim}\;\psi_1, \end{equation*}
and hence $\tau_{F(\psi)} \subset (\phi_{F(\psi)})_{an}$ by Corollary \ref{ffanisotropysubform}. Let $\eta = (\phi_{F(\psi)})_{an}$. Then (using the remark at the beginning of this subsection)
\begin{align*} i_0(\phi_{F(\psi \times \sigma)}) - i_0(\phi_{F(\psi)}) &= i_0(\phi_{F(\psi)}) + i_0(\eta_{F(\psi)(\sigma)}) - i_0(\phi_{F(\psi)})\\
&= i_0(\eta_{F(\psi)(\sigma)})\\
& \geq i_0(\tau_{F(\psi)(\sigma)})\\
& = i_0(\tau_{F(\sigma)(\psi)})\\
& \geq i_0(\tau_{F(\sigma)}) \geq s, \end{align*}
and the result follows. \end{proof} \end{proposition}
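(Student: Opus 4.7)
The plan is to reduce $\sigma$ to a binary purely inseparable form, locate a small subform of $\phi$ that carries all of the relevant isotropy, and then transfer this through the intermediate extension $F(\psi)$ using the compressibility results of \S 5.

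First, I would reduce $\sigma$ to a binary form. By Remark~\ref{ffremarks}~(1), $F(\sigma)$ is a purely inseparable extension of degree $p$ of a purely transcendental extension of $F$, so it coincides with $F_a = F(\sqrt[p]{a})$ for some $a \in F^*$ after a purely transcendental base change. By Lemma~\ref{sepext}, none of the quantities appearing in the inequality are affected by such a change (defect indices are preserved under separable extensions, and $\dim\;\psi_1$ is defined in terms of a standard splitting datum which is likewise stable), so we may assume from the outset that $\sigma \simeq \form{1,a}$ and $F(\sigma) = F_a$.

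Next, write $s$ for the minimum on the right-hand side of the claimed inequality. Since $i_0(\phi_{F_a}) \geq s$, Lemma~\ref{pinsepisotropysubform} produces a subform $\tau \subset \phi$ of dimension at most $ps$ with $i_0(\tau_{F_a}) \geq s$. By the very definition of $s$, this forces $\dim\;\tau \leq \dim\;\psi_1$, and so Corollary~\ref{ffanisotropysubform} guarantees that $\tau_{F(\psi)}$ stays anisotropic and embeds as a subform of $\eta := (\phi_{F(\psi)})_{an}$.

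To finish, I would use Proposition~\ref{classification} to strip the hyperbolic-like part of $\phi_{F(\psi)}$, giving
\[ i_0(\phi_{F(\psi \times \sigma)}) - i_0(\phi_{F(\psi)}) = i_0(\eta_{F(\psi)(\sigma)}). \]
This quantity is bounded below by $i_0(\tau_{F(\psi)(\sigma)})$ in view of the subform inclusion $\tau_{F(\psi)} \subset \eta$. Exchanging the order of extensions via the identification $F(\psi)(\sigma) = F(\sigma)(\psi)$ and invoking the trivial monotonicity of defect indices under field extensions then yields $i_0(\tau_{F(\psi)(\sigma)}) = i_0(\tau_{F(\sigma)(\psi)}) \geq i_0(\tau_{F(\sigma)}) \geq s$, closing the argument. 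The main obstacle I anticipate is the balancing act that accounts for the somewhat unusual cap $[\dim\;\psi_1/p]$: Lemma~\ref{pinsepisotropysubform} translates an $F_a$-isotropy of magnitude $s$ into a subform of dimension up to $ps$, whereas Corollary~\ref{ffanisotropysubform} demands that that dimension be at most $\dim\;\psi_1$ if anisotropy is to survive the passage to $F(\psi)$. The threshold $s \leq [\dim\;\psi_1/p]$ is precisely where these two competing constraints reconcile, and recognising that one should aim for this particular bound, rather than for a cleaner additivity statement, is the main conceptual hurdle.
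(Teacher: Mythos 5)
Your proposal is correct and follows essentially the same route as the paper's proof: reduce $\sigma$ to a binary form via Remark~\ref{ffremarks}~(1) and Lemma~\ref{sepext}, extract a subform $\tau$ of dimension at most $ps$ via Lemma~\ref{pinsepisotropysubform}, pass it through $F(\psi)$ anisotropically via Corollary~\ref{ffanisotropysubform}, and close by interchanging $F(\psi)(\sigma) = F(\sigma)(\psi)$ and using monotonicity of $i_0$. The only cosmetic difference is that you cite Proposition~\ref{classification} explicitly for the identity $i_0(\phi_{F(\psi\times\sigma)}) = i_0(\phi_{F(\psi)}) + i_0(\eta_{F(\psi)(\sigma)})$, which the paper leaves implicit.
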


We can now prove the main result of this subsection.

\begin{theorem} \label{secondcomparison} Let $\phi$ be an anisotropic quasilinear $p$-form of dimension $>1$ over $F$, and let $L$ be a field extension of $F$. Then
\begin{equation*} i_0(\phi_{L(\phi)}) - i_1(\phi) \geq \mathrm{min}\lbrace i_0(\phi_L), [\frac{\mathrm{dim}\;\phi_1}{p}] \rbrace. \end{equation*}
\begin{proof} In view of Lemma \ref{sepext}, it is enough to treat the case where $F \subset L$ is a purely inseparable algebraic extension. We may clearly reduce further to the case where $F \subset L$ is finite. We will proceed by induction on $[L:F]$, which is a power of $p$. The case where $L = F$ is trivial. Otherwise, consider a filtration $F \subset M \subset L$, where $M$ is a subfield of $L$ satisfying $[M:F] = p$. The field $M$ may be regarded as the function field of a 0-dimensional quasilinear $p$-hypersurface. In particular, we can apply Proposition \ref{secondcomparisonbasic} in the case where $\psi = \phi$ and $F(\sigma) = M$ to obtain
\begin{equation} \label{eq1} i_0(\phi_{M(\phi)}) - i_1(\phi) \geq \mathrm{min}\lbrace i_0(\phi_M),[\frac{\mathrm{dim}\;\phi_1}{p}]\rbrace. \end{equation}
If $M = L$, then we are done. Suppose that $M \neq L$. If $i_0(\phi_M) \geq [\frac{\mathrm{dim}\;\phi_1}{p}]$, then everything follows immediately from \eqref{eq1}, since $i_0(\phi_{L(\phi)}) \geq i_0(\phi_{M(\phi)})$. We may therefore assume that $i_0(\phi_M) < [\frac{\mathrm{dim}\;\phi_1}{p}]$, and hence that $i_0(\phi_{M(\phi)}) - i_1(\phi) \geq i_0(\phi_M)$. Let $\tau = (\phi_M)_{an}$. Then, using Remark \ref{ffremarks} (2), we have
\begin{equation*}  i_0(\phi_{M(\phi)}) = i_0(\phi_M) + i_0(\tau_{M(\tau)}) = i_0(\phi_M) + i_1(\tau). \end{equation*}
In particular, we see that
\begin{equation} \label{eq3} i_1(\tau) \geq i_1(\phi). \end{equation}
Now, the induction hypothesis implies that
\begin{equation} \label{eq4} i_0(\tau_{L(\tau)}) - i_1(\tau) \geq \mathrm{min}\lbrace i_0(\tau_L), [\frac{\mathrm{dim}\;\tau_1}{p}] \rbrace. \end{equation}
By the definition of $\tau$ (and again, Remark \ref{ffremarks} (2)) we have
\begin{align} \label{eq5} i_0(\phi_{L(\phi)}) - i_1(\phi) &= i_0(\phi_M) + i_0(\tau_{L(\tau)}) - i_1(\phi) \nonumber \\
&= i_0(\phi_M) + (i_0(\tau_{L(\tau)}) - i_1(\tau)) + (i_1(\tau) - i_1(\phi)). \end{align}
The proof now splits into two cases.\\

\noindent {\it Case 1.} $i_0(\tau_L) \leq [\frac{\mathrm{dim}\;\tau_1}{p}]$. In this case \eqref{eq3}, \eqref{eq4} and \eqref{eq5} together imply that
\begin{equation*} i_0(\phi_{L(\phi)}) - i_1(\phi) \geq i_0(\phi_M) + i_0(\tau_L). \end{equation*}
But $i_0(\phi_M) + i_0(\tau_L) = i_0(\phi_L)$ by the definition of $\tau$, so we are done with this case.\\

\noindent {\it Case 2.} $i_0(\tau_L) > [\frac{\mathrm{dim}\;\tau_1}{p}]$. Suppose first that $i_0(\phi_M) = 0$, so that $\tau = \phi_M$. In particular, $\mathrm{dim}\;\tau = \mathrm{dim}\;\phi$. Then \eqref{eq3}, \eqref{eq4} and \eqref{eq5} together imply that
\begin{align*} i_0(\phi_{L(\phi)}) - i_1(\phi) &\geq [\frac{\mathrm{dim}\;\tau_1}{p}] + (i_1(\tau) - i_1(\phi))\\
&= [\frac{\mathrm{dim}\;\phi - i_1(\tau)}{p}] + (i_1(\tau) - i_1(\phi))\\
&=  [\frac{\mathrm{dim}\;\phi + (p-1)i_1(\tau) - pi_1(\phi)}{p}]\\
&\geq  [\frac{\mathrm{dim}\;\phi - i_1(\phi)}{p}] = [\frac{\mathrm{dim}\;\phi_1}{p}], \end{align*}
and we are done. We may therefore assume that $i_0(\phi_M) > 0$. Then \eqref{eq3}, \eqref{eq4} and \eqref{eq5} together imply that
\begin{align*} i_0(\phi_{L(\phi)}) - i_1(\phi) & \geq i_0(\phi_M) + [\frac{\mathrm{dim}\;\tau_1}{p}] + (i_1(\tau) - i_1(\phi))\\
& \geq \frac{pi_0(\phi_M) + \mathrm{dim}\;\tau_1 - (p-1) + pi_1(\tau) - pi_1(\phi)}{p}\\
& = \frac{pi_0(\phi_M) + \mathrm{dim}\;\tau - (p-1) + (p-1)i_1(\tau) - pi_1(\phi)}{p}\\
& = \frac{\mathrm{dim}\;\phi_1 + (p-1)(i_0(\phi_M) - 1 + i_1(\tau) - i_1(\phi))}{p}\\
& \geq \frac{\mathrm{dim}\;\phi_1}{p} \geq [\frac{\mathrm{dim}\;\phi_1}{p}], \end{align*}
and again we are done. The theorem is proved. \end{proof} \end{theorem}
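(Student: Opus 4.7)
\textbf{Proof plan for Theorem \ref{secondcomparison}:}

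The plan is to reduce to finite purely inseparable extensions of the base field and then induct on the degree, using Proposition \ref{secondcomparisonbasic} as the single-step engine.

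First I reduce to the case where $L/F$ is purely inseparable algebraic. As noted at the start of \S 3.3, any extension $F \subset L$ may be decomposed as $F \subset M \subset L$ with $M/F$ separable and $L/M$ purely inseparable algebraic. Lemma \ref{sepext} (1) shows that $\phi_M$ is anisotropic, and, applied to the separable extension $F(\phi) \subset M(\phi)$, it also shows that $\phi_1$ remains anisotropic over $M(\phi)$. Consequently $i_1(\phi_M) = i_1(\phi)$ and $\dim(\phi_M)_1 = \dim\phi_1$, so neither side of the desired inequality changes when $F$ is replaced by $M$. A standard colimit argument further reduces me to the case where $[L:F] = p^n$ is finite.

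I induct on $n$; the case $n = 0$ is trivial. For the inductive step, choose an intermediate field $F \subset M \subset L$ with $[M:F] = p$, and write $M = F_a = F(\sigma)$ for $\sigma = \form{1,a}$. Applying Proposition \ref{secondcomparisonbasic} with $\psi = \phi$ and this $\sigma$ yields
\begin{equation*}
i_0(\phi_{M(\phi)}) - i_1(\phi) \geq \min\{i_0(\phi_M),\; [\dim \phi_1 / p]\},
\end{equation*}
which settles the case $L = M$. For $L \neq M$, set $\tau = (\phi_M)_{an}$ and apply the induction hypothesis to $\tau$ over $M$ with the extension $L$ (of degree $p^{n-1}$ over $M$):
\begin{equation*}
i_0(\tau_{L(\tau)}) - i_1(\tau) \geq \min\{i_0(\tau_L),\; [\dim \tau_1 / p]\}.
\end{equation*}
Remark \ref{ffremarks} (2) gives $M(\phi) \sim_M M(\tau)$ and $L(\phi) \sim_M L(\tau)$, so Lemma \ref{placeisotropy} provides the identities $i_0(\phi_{L(\phi)}) = i_0(\phi_M) + i_0(\tau_{L(\tau)})$ and $i_0(\phi_L) = i_0(\phi_M) + i_0(\tau_L)$. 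Specialising the first of the displayed inequalities to $L = M$ also yields $i_1(\tau) \geq i_1(\phi)$, a fact used at the end.

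The remainder is case analysis. If the minimum in the first displayed inequality is $[\dim\phi_1/p]$, nothing more is required. Otherwise $i_0(\phi_M) < [\dim\phi_1/p]$ and the first inequality supplies $i_0(\phi_{M(\phi)}) - i_1(\phi) \geq i_0(\phi_M)$; if moreover the minimum in the second inequality is $i_0(\tau_L)$, summing produces $i_0(\phi_{L(\phi)}) - i_1(\phi) \geq i_0(\phi_M) + i_0(\tau_L) = i_0(\phi_L)$. The genuine obstacle is the residual subcase where the second inequality supplies only $[\dim\tau_1/p]$: since $\dim\tau_1 = \dim\phi - i_0(\phi_M) - i_1(\tau)$ is a priori strictly smaller than $\dim\phi_1 = \dim\phi - i_1(\phi)$, I must recover the full $[\dim\phi_1/p]$ bound. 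The key is that $i_1(\tau) \geq i_1(\phi)$, combined with these explicit formulae, permits a direct arithmetic manipulation; the subcase $i_0(\phi_M) = 0$ has to be isolated in order to absorb the floor-function error. I expect this final arithmetic juggling, rather than any conceptual issue, to be the step requiring the most care.
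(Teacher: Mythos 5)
Your proposal follows the paper's own proof essentially step for step: the same reduction to finite purely inseparable extensions via Lemma \ref{sepext} and a colimit argument, the same induction on $[L:F]$ powered by Proposition \ref{secondcomparisonbasic} for the degree-$p$ step, the same bookkeeping through $\tau = (\phi_M)_{an}$ with $i_0(\phi_{L(\phi)}) = i_0(\phi_M) + i_0(\tau_{L(\tau)})$, the same key observation $i_1(\tau) \geq i_1(\phi)$, and the same case split with the subcase $i_0(\phi_M) = 0$ isolated. The only thing you stop short of is writing out the floor-function arithmetic in the final case, which does go through exactly as you anticipate, using $[x] \geq (x-(p-1))/p$ for integer $x$ together with the identities $\dim\tau = \dim\phi - i_0(\phi_M)$, $\dim\tau_1 = \dim\tau - i_1(\tau)$, and $\dim\phi_1 = \dim\phi - i_1(\phi)$ combined with $i_1(\tau) \geq i_1(\phi)$.
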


We consider some special cases of this result.

\begin{corollary} \label{i1comparison} Let $\phi$ be an anisotropic quasilinear $p$-form of dimension $>1$ over $F$, and let $L$ be a field extension of $F$. If $i_0(\phi_L) \leq [\frac{\mathrm{dim}\;\phi_1}{p}]$, then $i_1((\phi_L)_{an}) \geq i_1(\phi)$.
\begin{proof} Using Remark \ref{ffremarks} (2), we see that $i_1(\phi_{L(\phi)}) = i_0(\phi_L) + i_1((\phi_L)_{an})$. The assertion therefore follows immediately from Theorem \ref{secondcomparison}. \end{proof} \end{corollary}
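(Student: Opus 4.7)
The plan is to combine Theorem \ref{secondcomparison} with an identity expressing $i_0(\phi_{L(\phi)})$ as a sum of two contributions: the defect picked up when passing from $F$ to $L$, and the first higher defect index of the anisotropic part of $\phi_L$.

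First, I would set $\tau = (\phi_L)_{an}$ and unpack the relation between $\phi$, $\phi_L$, and $\tau$ over the field $L(\phi)$. By Proposition \ref{classification}, $\phi_L \simeq \tau \oplus (i_0(\phi_L) \cdot \form{0})$, so after extending scalars to $L(\phi) = L(\phi_L)$, the defect indices add: $i_0(\phi_{L(\phi)}) = i_0(\phi_L) + i_0(\tau_{L(\phi)})$. The point is then to identify $i_0(\tau_{L(\phi)})$ with $i_1(\tau) = i_1((\phi_L)_{an})$. This follows from the stable birational equivalence $L(\phi_L) \sim_L L(\tau)$ guaranteed by Remark \ref{ffremarks} (2) (since $D(\phi_L) = D(\tau)$, the varieties $X_{\phi_L}$ and $X_\tau$ are stably birational), together with the invariance of the defect index under $F$-equivalence stated in Lemma \ref{placeisotropy}. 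So one obtains
\begin{equation*}
i_0(\phi_{L(\phi)}) = i_0(\phi_L) + i_1((\phi_L)_{an}).
\end{equation*}

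Second, I would feed this identity into Theorem \ref{secondcomparison}, which provides
\begin{equation*}
i_0(\phi_{L(\phi)}) - i_1(\phi) \geq \min\bigl\{i_0(\phi_L),\, [\tfrac{\dim \phi_1}{p}]\bigr\}.
\end{equation*}
The hypothesis $i_0(\phi_L) \leq [\tfrac{\dim \phi_1}{p}]$ forces the minimum on the right-hand side to equal $i_0(\phi_L)$. Substituting the identity above, the $i_0(\phi_L)$ term cancels on each side, and what remains is precisely $i_1((\phi_L)_{an}) \geq i_1(\phi)$.

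There is no real obstacle here; the only subtle step is the identification of $i_0(\tau_{L(\phi)})$ with $i_1(\tau)$, which is a matter of bookkeeping with Remark \ref{ffremarks} (2) and Lemma \ref{placeisotropy}. The entire argument is therefore little more than a reformulation of Theorem \ref{secondcomparison} once one splits the defect index of $\phi$ over $L(\phi)$ into its ``$L$-part'' and its ``generic part''.
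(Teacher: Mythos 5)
Your proof is correct and follows essentially the same route as the paper: establish the identity $i_0(\phi_{L(\phi)}) = i_0(\phi_L) + i_1((\phi_L)_{an})$ via Remark \ref{ffremarks} (2) and Lemma \ref{placeisotropy}, then substitute into Theorem \ref{secondcomparison} and cancel. You merely spell out the bookkeeping behind the identity more explicitly than the paper does.
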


\begin{corollary} \label{i1i2comparison} Let $\phi$ be an anisotropic quasilinear $p$-form of dimension $>1$ over $F$. Then
\begin{equation*} i_2(\phi) \geq \mathrm{min} \lbrace i_1(\phi), [\frac{\mathrm{dim}\;\phi_1}{p}] \rbrace. \end{equation*}
\begin{proof} Applying Theorem \ref{secondcomparison} in the case where $L = F(\phi)$, we get 
\begin{equation*} i_0(\phi_{F(\phi^{\times 2})}) - i_1(\phi) \geq \mathrm{min} \lbrace i_1(\phi), [\frac{\mathrm{dim}\;\phi_1}{p}] \rbrace. \end{equation*}
It only remains to observe that the left hand side of this inequality is equal to $i_2(\phi)$. This follows from Remark \ref{sspremarks} (2) and Lemma \ref{placeisotropy}. \end{proof} \end{corollary}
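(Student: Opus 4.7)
The plan is to apply Theorem \ref{secondcomparison} directly with the choice $L = F(\phi)$, and then identify both sides of the resulting inequality in terms of the invariants appearing in the corollary's statement.

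First, specialising Theorem \ref{secondcomparison} to $L = F(\phi)$ yields
\begin{equation*}
i_0\!\left(\phi_{F(\phi)(\phi)}\right) - i_1(\phi) \;\geq\; \min\!\left\{\, i_0\!\left(\phi_{F(\phi)}\right),\; \left[\tfrac{\mathrm{dim}\,\phi_1}{p}\right]\right\}.
\end{equation*}
By the very definition of the standard splitting tower, $i_0(\phi_{F(\phi)}) = i_1(\phi)$, so the right-hand side is already exactly $\min\{i_1(\phi),\,[\mathrm{dim}\,\phi_1/p]\}$. The whole task is therefore to verify that the left-hand side equals $i_2(\phi)$.

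To identify the left-hand side, I would first note that $F(\phi)(\phi)$ is canonically isomorphic to $F(\phi^{\times 2})$, as recalled at the start of \S 7.2. Next, by Remark \ref{sspremarks} (2), we have $F_2 \sim_F F(\phi^{\times 2})$, and then Lemma \ref{placeisotropy} gives $i_0(\phi_{F(\phi^{\times 2})}) = i_0(\phi_{F_2})$. Finally, using that $\phi_1 \simeq (\phi_{F_1})_{an}$, Proposition \ref{classification} applied over $F_2$ gives
\begin{equation*}
i_0(\phi_{F_2}) \;=\; i_0(\phi_{F_1}) + i_0((\phi_1)_{F_2}) \;=\; i_1(\phi) + i_2(\phi),
\end{equation*}
so that $i_0(\phi_{F(\phi)(\phi)}) - i_1(\phi) = i_2(\phi)$, as required.

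This is essentially a bookkeeping argument once Theorem \ref{secondcomparison} is in hand; there is no substantive obstacle. The only point that requires any care is the chain of identifications $F(\phi)(\phi) \simeq F(\phi^{\times 2}) \sim_F F_2$ together with the additivity of the defect index along the tower, which together convert the raw output of Theorem \ref{secondcomparison} into a statement about the second higher defect index $i_2(\phi)$.
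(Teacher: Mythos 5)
Your proof is correct and follows exactly the route the paper's proof gestures at: specialise Theorem \ref{secondcomparison} to $L = F(\phi)$, note that $i_0(\phi_{F(\phi)}) = i_1(\phi)$ by definition, identify $F(\phi)(\phi) \simeq F(\phi^{\times 2}) \sim_F F_2$ via Remark \ref{sspremarks} (2) and Lemma \ref{placeisotropy}, and then compute $i_0(\phi_{F_2}) = i_1(\phi) + i_2(\phi)$ from Proposition \ref{classification}. The paper's proof is terser but cites precisely the same references; you have simply spelled out the bookkeeping.
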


These results are of particular interest in the case where $p=2$. We shall explore this further in the remaining sections.

\section{Functoriality of the standard splitting pattern} 

Let $\phi$ and $\psi$ be anisotropic quasilinear $p$-forms of dimension $>1$ over $F$. We have already seen in Proposition \ref{heightfunctoriality} that if $\phi_{F(\psi)}$ is isotropic, then $h(\psi) \leq h(\phi)$. In other words, the existence of a rational map $X_\psi \dashrightarrow X_\phi$ implies the inequality $h(\psi) \leq h(\phi)$. On the other hand, Theorem \ref{compressibilitytheorem} and Proposition \ref{equivalenceandssp} suggest that this ``functoriality'' is exhibited not only by the height, but the standard splitting pattern as a whole. To make this precise, let us introduce the following notation. For $\phi$ and $\psi$ as above, we write $\widetilde{\mathrm{sp}}(\psi) \leq \widetilde{\mathrm{sp}}(\phi)$ if $h(\psi) \leq h(\phi)$ and $\mathrm{dim}\;\psi_r \leq \mathrm{dim}\;\phi_r$ for every $r \in [1,h(\psi)]$. In this section we will prove the following result.

\begin{theorem} \label{functorialityofssp} Assume that $p=2$ or $p=3$. Let $\phi$ and $\psi$ be anisotropic quasilinear $p$-forms of dimension $>1$ over $F$ such that $\phi_{F(\psi)}$ is isotropic. Then
\begin{enumerate} \item[$\mathrm{(1)}$] $\widetilde{\mathrm{sp}}(\psi) \leq \widetilde{\mathrm{sp}}(\phi)$.
\item[$\mathrm{(2)}$] $\widetilde{\mathrm{sp}}(\psi) = \widetilde{\mathrm{sp}}(\phi)$ if and only if $\psi_{F(\phi)}$ is isotropic. \end{enumerate} \end{theorem}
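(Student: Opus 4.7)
The plan is to prove (1) by induction on $h(\psi)$ and then deduce (2) as a consequence.

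For (2), the ``if'' direction follows by applying (1) in both directions: given both $\phi_{F(\psi)}$ and $\psi_{F(\phi)}$ isotropic, (1) yields $\widetilde{\mathrm{sp}}(\psi) \leq \widetilde{\mathrm{sp}}(\phi)$ and $\widetilde{\mathrm{sp}}(\phi) \leq \widetilde{\mathrm{sp}}(\psi)$, forcing equality. For the ``only if'' direction, assume $\widetilde{\mathrm{sp}}(\psi) = \widetilde{\mathrm{sp}}(\phi)$; in particular $\dim \psi_1 = \dim \phi_1$. Combined with the bound $\dim (\phi_{F(\psi)})_{an} \leq \dim \phi_1$ established in the course of proving (1), Corollary \ref{ffanisotropysubform} applied to a subform $\sigma \subset \phi$ of dimension $\dim \phi_1$ chosen via Lemma \ref{excellence} forces $\sigma_{F(\psi)} \simeq (\phi_{F(\psi)})_{an}$. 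Iterating this identification through the standard splitting tower of $\psi$ and combining with Remark \ref{ffremarks}(2) and Proposition \ref{equivalenceandssp}, one extracts $F(\psi) \sim_F F(\phi)$, whence $\psi_{F(\phi)}$ is isotropic by Lemma \ref{placeisotropy}.

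For (1), the base case $h(\psi) = 1$ is immediate: $\widetilde{\mathrm{sp}}(\psi) = (1)$, and Proposition \ref{heightfunctoriality} gives $h(\psi) \leq h(\phi)$. For the inductive step with $h(\psi) = h \geq 2$, the hypothesis $p \leq 3$ enters through Proposition \ref{23transitivity}: the isotropy of $\phi_{F(\psi)}$ yields an $F$-place $F(\phi) \rightharpoonup F(\psi)$, and Lemma \ref{placeisotropy} gives $i_0(\phi_{F(\psi)}) \geq i_1(\phi)$, so that $\eta := (\phi_{F(\psi)})_{an}$ satisfies $\dim \eta \leq \dim \phi_1$. To invoke the induction hypothesis on $\psi_1$ paired with $\eta$ over $F(\psi)$, I verify that $\eta$ is isotropic over $F_2 := F(\psi)(\psi_1)$: since $\phi$ is isotropic over $F_2 \supset F(\psi)$, Proposition \ref{23transitivity} applied over $F(\psi)$ yields an $F(\psi)$-place $F(\psi)(\phi) \rightharpoonup F_2$, which combined with the $F(\psi)$-equivalence $F(\psi)(\eta) \sim_{F(\psi)} F(\psi)(\phi)$ of Remark \ref{ffremarks}(2) and with Lemma \ref{placeisotropy} gives $i_0(\eta_{F_2}) \geq i_1(\eta) \geq 1$. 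The induction hypothesis applied over $F(\psi)$ to $(\psi_1, \eta)$ then yields $\widetilde{\mathrm{sp}}(\psi_1) \leq \widetilde{\mathrm{sp}}(\eta)$.

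The main obstacle is the remaining comparison between $\widetilde{\mathrm{sp}}(\eta)$ (computed over $F(\psi)$) and the tail $(\dim \phi_2, \ldots, \dim \phi_h = 1)$ of $\widetilde{\mathrm{sp}}(\phi)$ (computed over $F$); the two splitting patterns are defined over different base fields, and so their comparison requires care. The plan is to show $\widetilde{\mathrm{sp}}(\eta) \leq (\dim \phi_2, \ldots, \dim \phi_h)$ by propagating the place $F(\phi) \rightharpoonup F(\psi)$ through the standard splitting tower $(F_r)$ of $\psi$: for each $r \in [1,h]$, $\phi$ is isotropic over $F_r$ (since $F_1 \subset F_r$), and an iteration of Proposition \ref{23transitivity} produces $F$-places from the $r$-th field in $\phi$'s standard splitting tower to $F_r$, so that Lemma \ref{placeisotropy} yields $\dim (\phi_{F_r})_{an} \leq \dim \phi_r$. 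Combined with the induction hypothesis at each level and with the identification of the tower of $\eta$ with $\phi$'s residual tower via Remark \ref{ffremarks}(2), this propagates the required dimension inequalities $\dim \psi_r \leq \dim \phi_r$ for all $r \in [1,h]$, completing the proof of (1).
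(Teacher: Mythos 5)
Your approach is genuinely different from the paper's: you induct on $h(\psi)$ rather than on $h(\phi)$, working with $\eta := (\phi_{F(\psi)})_{an}$ over $F(\psi)$ where the paper works with $\psi_{F(\phi)}$ over $F(\phi)$. But there is a real gap at the step where you claim that ``Proposition \ref{23transitivity} applied over $F(\psi)$ yields an $F(\psi)$-place $F(\psi)(\phi) \rightharpoonup F_2$.'' Proposition \ref{23transitivity} requires the form whose function field you are mapping \emph{from} to be anisotropic over the relevant base field, and $\phi_{F(\psi)}$ is isotropic by hypothesis. Passing to the anisotropic part does not help: you would then need to know that $\eta$ is already isotropic over $F_2 = F(\psi)(\psi_1)$ in order to invoke Proposition \ref{23transitivity} with $\eta$ in place of $\phi_{F(\psi)}$, which is exactly the conclusion you are trying to derive. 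The same difficulty recurs, more seriously, in your final ``propagation'' step: you assert that an ``iteration of Proposition \ref{23transitivity}'' produces $F$-places from the $r$-th field of $\phi$'s splitting tower to $F_r$, but for $r \geq 2$ these towers are built over different intermediate bases and the forms involved are no longer anisotropic over a common field, so Proposition \ref{23transitivity} does not iterate in the straightforward way suggested.

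What fills precisely these gaps in the paper is Proposition \ref{towerplaces}, which constructs $F$-places $F(\psi^{\times (r-1)} \times \phi) \rightharpoonup F(\psi^{\times r})$ and $F(\phi^{\times r}) \rightharpoonup F(\psi \times \phi^{\times (r-1)})$; its proof is not a mere composition of instances of Proposition \ref{23transitivity} but also uses Proposition \ref{secondcomparisonbasic} (hence the compressibility machinery) in an essential way. In particular, the isotropy of $\eta$ over $F_2$ is exactly the $r=2$ case of Proposition \ref{towerplaces}(1) together with Lemma \ref{placeisotropy}, and the paper's comparison of the residual splitting patterns (which you try to do by hand) is handled by the package of Proposition \ref{towerplaces}(2), Proposition \ref{functorialityprop}, and the induction on $h(\phi)$. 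Your argument for part (2) could also be tightened: once $\dim \psi_1 = \dim \phi_1$ is known, the isotropy of $\psi_{F(\phi)}$ is immediate from Lemma \ref{firststepfunctoriality}(2), whereas your route through Proposition \ref{equivalenceandssp} (which runs in the opposite logical direction) is at best roundabout. To repair the proof you would need to establish, or explicitly invoke, statements of the strength of Proposition \ref{towerplaces} rather than Proposition \ref{23transitivity} alone.
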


\begin{remark} This result is a substantial generalisation of Theorem \ref{compressibilitytheorem}. As the reader will observe, there are several obstructions which currently limit our approach to the primes 2 and 3. Most of these are related to Question \ref{questions} (1), but we remark that even if we replace the condition ``$\phi_{F(\psi)}$ is isotropic'' with the condition ``there exists an $F$-place $F(\phi) \rightharpoonup F(\psi)$'', our proof of Theorem \ref{functorialityofssp} does not go through for $p>3$. \end{remark}

We will need some preliminary results. We begin with following special case of Theorem \ref{functorialityofssp}, which was proved in \cite{Scully}. In the case where $p=2$, it is originally due to B. Totaro (cf. \citep[Theorem 5.2]{Totaro}).

\begin{lemma}[{cf. \citep[Theorem 5.15]{Scully}}] \label{firststepfunctoriality} Assume that $p=2$ or $p=3$. Let $\phi$ and $\psi$ be anisotropic quasilinear $p$-forms of dimension $>1$ over $F$ such that $\phi_{F(\psi)}$ is isotropic. Then
\begin{enumerate} \item[$\mathrm{(1)}$] $\mathrm{dim}\;\psi_1 \leq \mathrm{dim}\;\phi_1$.
\item[$\mathrm{(2)}$] $\mathrm{dim}\;\psi_1 = \mathrm{dim}\;\phi_1$ if and only if $\psi_{F(\phi)}$ is isotropic. \end{enumerate}
\begin{proof} Let $\sigma$ be a minimal neighbour of $\phi$. By Theorem \ref{ruledness}, we have $F(\sigma) \sim_F F(\phi)$. On the other hand, Proposition \ref{23transitivity} shows that there exists an $F$-place $F(\phi) \rightharpoonup F(\psi)$. Since $F$-places can be composed, there exists an $F$-place $F(\sigma) \rightharpoonup F(\psi)$. By Lemma \ref{placeisotropy}, it follows that $\sigma_{F(\psi)}$ is isotropic. The first part of Theorem \ref{compressibilitytheorem} now implies that
\begin{equation*} \mathrm{dim}\;\psi_1 \leq \mathrm{dim}\;\sigma - 1 = \mathrm{dim}\;\phi_1, \end{equation*}
which proves (1). For (2), if $\mathrm{dim}\;\psi_1 = \mathrm{dim}\;\phi_1$, then the second part of Theorem \ref{compressibilitytheorem} shows that $\psi_{F(\sigma)}$ is isotropic. Using the equivalence $F(\sigma)\sim_F F(\phi)$ and Lemma \ref{placeisotropy}, we see that $\psi_{F(\phi)}$ is isotropic. Conversely, if $\psi_{F(\phi)}$ is isotropic, then both $\mathrm{dim}\;\psi_1 \leq \mathrm{dim}\;\phi_1$ and $\mathrm{dim}\;\phi_1 \leq \mathrm{dim}\;\psi_1$ by (1), whence $\mathrm{dim}\;\psi_1 = \mathrm{dim}\;\phi_1$. \end{proof} \end{lemma}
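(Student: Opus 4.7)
The plan is to leverage Theorem \ref{compressibilitytheorem} by replacing $\phi$ with a minimal neighbour $\sigma$, which has dimension $\mathrm{dim}\;\sigma = \mathrm{dim}_{Izh}\phi = \mathrm{dim}\;\phi_1 + 1$. If I can show that $\sigma_{F(\psi)}$ is isotropic, then Theorem \ref{compressibilitytheorem} (1) applied to the pair $(\sigma,\psi)$ immediately yields
\begin{equation*}
\mathrm{dim}\;\psi_1 + 1 \;=\; \mathrm{dim}_{Izh}\psi \;\leq\; \mathrm{dim}\;\sigma \;=\; \mathrm{dim}\;\phi_1 + 1,
\end{equation*}
which is exactly part (1). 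So the entire problem reduces to transporting the isotropy of $\phi$ over $F(\psi)$ to an isotropy of $\sigma$ over $F(\psi)$, despite $\sigma$ being a priori a smaller subform.

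The transport will be carried out via $F$-places. First, Theorem \ref{ruledness} tells us that $F(\phi)$ is purely transcendental over $F(\sigma)$; consequently $F(\sigma) \sim_F F(\phi)$, and in particular there exists an $F$-place $F(\sigma) \rightharpoonup F(\phi)$. Independently, since $\phi_{F(\psi)}$ is isotropic and we are in the regime $p \leq 3$, Proposition \ref{23transitivity} supplies an $F$-place $F(\phi) \rightharpoonup F(\psi)$. Composing these two places (which is legal, as recalled in the appendix) produces an $F$-place $F(\sigma) \rightharpoonup F(\psi)$. Because $\sigma$ is already isotropic over its own function field $F(\sigma)$, Lemma \ref{placeisotropy} (1) then guarantees $i_0(\sigma_{F(\psi)}) \geq 1$, which is what was needed.

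For part (2), the ``if'' direction is immediate by symmetry: if $\psi_{F(\phi)}$ is isotropic then part (1) with the roles of $\phi$ and $\psi$ exchanged gives $\mathrm{dim}\;\phi_1 \leq \mathrm{dim}\;\psi_1$, and combined with (1) this forces equality. For the ``only if'' direction, suppose $\mathrm{dim}\;\psi_1 = \mathrm{dim}\;\phi_1$, so that $\mathrm{dim}_{Izh}\psi = \mathrm{dim}\;\sigma$. Since $\sigma_{F(\psi)}$ is isotropic by the argument above, the equality case of Theorem \ref{compressibilitytheorem} (2) yields that $\psi_{F(\sigma)}$ is isotropic. Appealing again to the equivalence $F(\sigma) \sim_F F(\phi)$ and Lemma \ref{placeisotropy} (1), we transfer this to conclude that $\psi_{F(\phi)}$ is isotropic.

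The main obstacle, and the sole reason for the hypothesis $p \leq 3$, is the invocation of Proposition \ref{23transitivity}: outside of that range, there is no known way to convert the isotropy of $\phi$ over the external field $F(\psi)$ into an $F$-place from $F(\phi)$ to $F(\psi)$, and so the composition trick that produces the crucial place $F(\sigma) \rightharpoonup F(\psi)$ collapses. Every other ingredient of the argument, namely Theorem \ref{ruledness}, Theorem \ref{compressibilitytheorem}, and Lemma \ref{placeisotropy}, is available in arbitrary characteristic, so a positive answer to Question \ref{questions} (1) for general $p$ would extend the lemma verbatim.
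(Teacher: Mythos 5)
Your proof is correct and follows essentially the same route as the paper's: pass to a minimal neighbour $\sigma$ of $\phi$, compose the $F$-place $F(\sigma) \rightharpoonup F(\phi)$ (from Theorem \ref{ruledness}) with the $F$-place $F(\phi) \rightharpoonup F(\psi)$ (from Proposition \ref{23transitivity}) to see $\sigma_{F(\psi)}$ is isotropic, then apply both parts of Theorem \ref{compressibilitytheorem} to the pair $(\sigma,\psi)$ and transfer back across $F(\sigma) \sim_F F(\phi)$. The only cosmetic difference is that you explicitly spell out the appeal to Lemma \ref{placeisotropy}~(1) via $i_0(\sigma_{F(\sigma)}) \geq 1$, which the paper leaves implicit.
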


\begin{proposition} \label{towerplaces} Assume that $p=2$ or $p=3$. Let $\phi$ and $\psi$ be anisotropic quasilinear $p$-forms of dimension $>1$ over $F$ such that $\phi_{F(\psi)}$ is isotropic. Let $h = h(\psi)$. Then $h \leq h(\phi)$ and
\begin{enumerate} \item[$\mathrm{(1)}$] There exist $F$-places $F(\psi^{\times (r-1)} \times \phi) \rightharpoonup F(\psi^{\times r})$ for all $r \in [1,h)$.
\item[$\mathrm{(2)}$] There exist $F$-places $F(\phi^{\times r}) \rightharpoonup F(\psi \times \phi^{\times (r-1)})$ for all $r \in [1,h)$. \end{enumerate}
\begin{remark} For $r>1$, these statements do not seem to be immediate consequences of Proposition \ref{23transitivity}. \end{remark}
\begin{proof} The inequality $h \leq h(\phi)$ follows from Proposition \ref{heightfunctoriality}. We will just prove (1). The proof of (2) proceeds along similar lines and we leave it to the reader. We proceed by induction on $r$. The case where $r = 1$ follows from Proposition \ref{23transitivity}. Suppose now that $r>1$ (in particular, we have $h>2$), and let $L = F(\psi^{\times (r-2)})$. Let $\sigma = (\phi_L)_{an}$ and $\tau = (\psi_L)_{an}$. Since $h(\phi) \geq h$, $\sigma$ is not completely split by Lemmas \ref{ffisotropy} and \ref{isotropytower} (3). By Remark \ref{ffremarks} (2), we have equivalences $L(\phi) \sim_L L(\sigma)$ and $L(\psi) \sim_L L(\tau)$. By the induction hypothesis, there exists an $F$-place $L(\sigma) \rightharpoonup L(\psi)$, and hence an $F$-place $L(\phi) \rightharpoonup L(\tau)$. Applying Lemma \ref{placeisotropy} to the form $\phi$, we get
\begin{equation*} i_0(\sigma_{L(\tau)}) = i_0(\phi_{L(\tau)}) - i_0(\phi_L) \geq i_0(\phi_{L(\sigma)}) - i_0(\phi_L) =  i_1(\sigma) > 0. \end{equation*}
In other words, $\sigma$ becomes isotropic over $L(\tau)$. On the other hand, we have equivalences
\begin{equation*} F(\psi^{\times (r-1)} \times \phi) \sim_F L(\tau \times \sigma)\;\;\text{and}\;\;F(\psi^{\times r}) \sim_F F(\tau^{\times 2}) \end{equation*}
by Remarks \ref{ffremarks} (2) and \ref{sspremarks} (2). Replacing $F$ by $L$, and $\phi$ and $\psi$ by $\sigma$ and $\tau$ respectively, we are reduced to the case where $r=2$. Let $\eta = (\phi_{F(\psi)})_{an}$. Then $F(\psi)(\eta) \sim_F F(\psi \times \phi)$ by Remark \ref{ffremarks} (2). It then follows from Proposition \ref{23transitivity} that in order to prove the existence of an $F$-place $F(\psi \times \phi) \rightharpoonup F(\psi^{\times 2})$, it suffices to show that $\eta_{F(\psi^{\times 2})}$ is isotropic. But by Proposition \ref{secondcomparisonbasic}, we have
\begin{equation*} i_0(\eta_{F(\psi^{\times 2})}) = i_0(\phi_{F(\psi^{\times 2})}) - i_0(\phi_{F(\psi)}) \geq \mathrm{min}\lbrace i_0(\phi_{F(\psi)}),[\frac{\mathrm{dim}\;\psi_1}{p}] \rbrace. \end{equation*}
Since $i_0(\phi_{F(\psi)}) > 0$ by assumption, the integer on the right of this inequality is positive provided that $\mathrm{dim}\;\psi_1 \geq p$. Since $p \leq 3$, the condition is automatically satisfied if $h > 2$, and so the statement is proved. \end{proof} \end{proposition}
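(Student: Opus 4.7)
The plan is to prove the height inequality and the two symmetric place-existence assertions separately. The inequality $h(\psi) \leq h(\phi)$ is immediate from Proposition \ref{heightfunctoriality}, since $\phi_{F(\psi)}$ being isotropic provides the rational map $X_\psi \dashrightarrow X_\phi$ needed. I will treat (1) by induction on $r$, and remark that (2) proceeds by a symmetric argument obtained by swapping the roles of $\phi$ and $\psi$ in the same scheme (the symmetry is exact once one has settled the $r=1$ case of each).

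For the base case $r=1$ of (1), the required $F$-place $F(\phi) \rightharpoonup F(\psi)$ is produced directly by Proposition \ref{23transitivity} applied to $\phi$ and the field extension $L = F(\psi)$, since the hypothesis is exactly that $\phi_{F(\psi)}$ is isotropic. This is where the restriction to $p \leq 3$ enters the argument, since Question \ref{questions} (1) is open for larger primes.

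For the inductive step, assume $r \geq 2$ (so that $r < h$ forces $h \geq 3$). Set $L = F(\psi^{\times(r-2)})$, and let $\sigma = (\phi_L)_{an}$ and $\tau = (\psi_L)_{an}$; by Lemmas \ref{ffisotropy} and \ref{isotropytower} (3) together with $h \leq h(\phi)$, the form $\sigma$ is not completely split. Remark \ref{ffremarks} (2) yields $L(\phi)\sim_L L(\sigma)$ and $L(\psi)\sim_L L(\tau)$, so the induction hypothesis at level $r-1$ supplies an $L$-place $L(\sigma) \rightharpoonup L(\tau)$. Applying Lemma \ref{placeisotropy} (1) to $\phi$ gives
\[
i_0(\sigma_{L(\tau)}) \;=\; i_0(\phi_{L(\tau)}) - i_0(\phi_L) \;\geq\; i_0(\phi_{L(\sigma)}) - i_0(\phi_L) \;=\; i_1(\sigma) \;>\; 0,
\]
so $\sigma$ is isotropic over $L(\tau)$. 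Using
\[
F(\psi^{\times (r-1)}\times\phi) \sim_F L(\tau\times\sigma), \qquad F(\psi^{\times r}) \sim_F L(\tau^{\times 2}),
\]
I am reduced to producing an $L$-place $L(\tau \times \sigma) \rightharpoonup L(\tau^{\times 2})$, i.e.\ to the $r=2$ case of the proposition, now formulated over $L$ for the forms $\sigma, \tau$.

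For $r=2$, set $\eta = (\phi_{F(\psi)})_{an}$, so that $F(\psi)(\eta) \sim_F F(\psi\times\phi)$. By Proposition \ref{23transitivity} applied over $F(\psi)$, it suffices to verify that $\eta$ is isotropic over $F(\psi^{\times 2}) = F(\psi)(\psi)$. Proposition \ref{secondcomparisonbasic} with the triple $(\phi,\psi,\psi)$ gives
\[
i_0(\eta_{F(\psi^{\times 2})}) \;=\; i_0(\phi_{F(\psi^{\times 2})}) - i_0(\phi_{F(\psi)}) \;\geq\; \min\!\left\{i_0(\phi_{F(\psi)}),\; \left[\tfrac{\dim\psi_1}{p}\right]\right\}.
\]
The first quantity in the minimum is positive by hypothesis, and the second is positive provided $\dim\psi_1 \geq p$. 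The reduction to $r=2$ has ensured $h\geq 3$, so $\psi_1$ has height $\geq 2$, whence by Theorem \ref{pfisterneighbourclassification} (the characterisation of height-one forms as having dimension at most $p$) one has $\dim\psi_1 \geq p+1$. The main obstacle throughout is the dependence on Proposition \ref{23transitivity}: it is invoked both in the base case and inside the $r=2$ argument, and it is precisely this ingredient that forces the restriction $p \leq 3$, since beyond it Question \ref{questions} (1) is open.
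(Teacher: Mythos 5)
The proposal reproduces the paper's argument almost verbatim: same induction on $r$ with base case from Proposition \ref{23transitivity}, same reduction via $L = F(\psi^{\times(r-2)})$, $\sigma$, $\tau$ to the case $r=2$, and the same use of Proposition \ref{secondcomparisonbasic} to finish. Two small imprecisions deserve correction, though neither breaks the argument.

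First, your parenthetical that (2) is obtained ``by swapping the roles of $\phi$ and $\psi$'' is not quite right: swapping $\phi \leftrightarrow \psi$ in (1) produces a place $F(\phi^{\times(r-1)} \times \psi) \rightharpoonup F(\phi^{\times r})$, which points in the wrong direction and also presupposes the hypothesis $\psi_{F(\phi)}$ isotropic, which is not given. Statement (2) does follow by an argument parallel to (1) (reduce via $L = F(\phi^{\times(r-2)})$ and apply Proposition \ref{secondcomparisonbasic} to the triple $(\phi,\phi,\psi)$), but it is genuinely a second argument, not a formal role-reversal.

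Second, the claim $\dim\psi_1 \geq p+1$ is false for $p=3$, and the inference you give does not support it. Height one does imply dimension at most $p$ (by Lemma \ref{isotropytower}, not really Theorem \ref{pfisterneighbourclassification}), but the contrapositive of that statement is ``dimension $\geq p+1$ implies height $\geq 2$'', not the converse; a form such as $\langle 1,a,b\rangle$ with $a,b$ $p$-independent over $F^p$ has dimension $3 = p$ and height $2$ when $p=3$. What is actually needed, and what is true, is the weaker bound $\dim\psi_1 \geq p$: since the dimensions in the standard splitting tower strictly decrease (Remark \ref{ffremarks}~(3)), one has $\dim\psi_1 \geq h(\psi_1)+1 = h \geq 3 \geq p$ for $p \leq 3$. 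This is what the paper uses, and it is exactly the point where $p \leq 3$ intervenes a second time.
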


The following result generalises Lemma \ref{ffsubformtower} in the case where $p \leq 3$.

\begin{proposition} \label{functorialityprop} Assume that $p=2$ or $p=3$. Let $\phi$ and $\psi$ be anisotropic quasilinear $p$-forms of dimension $>1$ over $F$ such that $\phi_{F(\psi)}$ is isotropic. Assume that $\psi_{F(\phi)}$ is anisotropic, and let $(F_r)$ be the standard splitting tower of $\psi$. Then either
\begin{enumerate} \item[$\mathrm{(1)}$] $h(\psi_{F(\phi)}) = h(\psi)$ and $\mathrm{sp}(\psi_{F(\phi)}) = \mathrm{sp}(\psi)$, or
\item[$\mathrm{(2)}$] $h(\psi_{F(\phi)}) = h(\psi) - 1$ and \begin{equation*} \mathrm{sp}(\psi_{F(\phi)}) = (\mathrm{dim}\;\psi, \mathrm{dim}\;\psi_1,...,\mathrm{dim}\;\psi_{s-1},\mathrm{dim}\;\psi_{s+1},...,\mathrm{dim}\;\psi_{h(\psi)} = 1), \end{equation*}
where $s \in [1,h(\psi))$ is the least positive integer $r$ such that $\psi_r$ becomes isotropic over $F_r(\phi)$. \end{enumerate}
\begin{proof} Recall from the proof of Lemma \ref{heightext} that the tower $(F_r(\phi))$ computes the standard splitting pattern of $\psi_{F(\phi)}$. If $h(\psi_{F(\phi)}) = h(\psi)$, then $\mathrm{sp}(\psi_{F(\phi)}) = \mathrm{sp}(\psi)$ by Lemma \ref{heightext}. If $h(\psi_{F(\phi)}) < h(\psi)$, then the same result shows that some $\psi_r$ becomes isotropic over $F_r(\phi)$. Choose $s$ to be minimal among all $r$ with this property. Then $\mathrm{dim}\;(\psi_{F_r(\phi)})_{an} = \mathrm{dim}\;\psi_r$ for all $r < s$. To finish the proof, we need to show that $\mathrm{dim}\;(\psi_{F_r(\phi)})_{an} = \mathrm{dim}\;\psi_{r+1}$ for all $r \in [s,h(\psi) - 1]$. Since $h(\psi_{F(\phi)}) < h(\psi)$, both $\psi_{F_{h(\psi)-1}(\phi)}$ and $\psi_{h(\psi)}$ are completely split, so the statement is clear if $r = h(\psi) - 1$. For the remaining cases, note that we have $F$-places $F_r(\phi) \rightharpoonup F_{r+1}$ for all $r \in [s,h(\psi) - 2]$ by Proposition \ref{towerplaces} (1) and Remark \ref{sspremarks} (2). By Lemma \ref{placeisotropy}, it follows that
\begin{equation*} \mathrm{dim}\;(\psi_{F_r(\phi)})_{an} \geq \mathrm{dim}\;(\psi_{F_{r+1}})_{an} = \mathrm{dim}\;\psi_{r+1} \end{equation*}
for all $r \in [s,h(\psi) - 2]$. It remains to establish the reverse inequalities. First note that if $\psi_r$ becomes isotropic over $F_r(\phi)$ for any such $r$, then the desired inequality
\begin{equation*} \mathrm{dim}\;(\psi_{F_r(\phi)})_{an} \leq \mathrm{dim}\;\psi_{r+1} \end{equation*}
holds by Corollary \ref{23minimalityi1} (that is, by the ``minimality'' property of the index $i_1$). Since $\psi_s$ becomes isotropic over $F_s(\phi)$, this holds in particular when $r = s$. To complete the proof, it will therefore be enough to show that if $\psi_r$ becomes isotropic over $F_r(\phi)$ for any $r \in [s,h(\psi) - 3]$, then $\psi_{r+1}$ becomes isotropic over $F_{r+1}(\phi)$. Fix such an $r$, and let $\eta = (\phi_{F_r})_{an}$. Then $F_r(\eta) \sim_{F_r} F_r(\phi)$ by Remark \ref{ffremarks} (2), and hence $\psi_r$ becomes isotropic over $F_r(\eta)$ (using Lemma \ref{placeisotropy}). Now, since $\phi_{F(\psi)}$ is isotropic, we have $h(\psi) \leq h(\phi)$ by Proposition \ref{heightfunctoriality}. On the other hand, Lemmas \ref{ffisotropy} and \ref{isotropytower} (3) show that $h(\eta) \geq h(\phi) - r$, and so $h(\eta) \geq h(\psi) - r \geq 3$. By Proposition \ref{towerplaces} (2), we have an $F_r$-place $F_r(\psi_r^{\times 2}) \rightharpoonup F_r(\eta \times \psi_r)$. But $F_r(\psi_r^{\times 2}) \sim_{F_{r+1}} F_{r+2}$ and $F_r(\eta \times \psi_r) \sim_{F_{r+1}} F_{r+1}(\phi)$ by Remark \ref{sspremarks} (2). Hence we have an $F$-place $F_{r+2} \rightharpoonup F_{r+1}(\phi)$. In particular, we have
\begin{equation*} i_0((\psi_{r+1})_{F_{r+1}(\phi)}) = i_0(\psi_{F_{r+1}(\phi)}) - i_0(\psi_{F_{r+1}}) \geq i_0(\psi_{F_{r+2}}) - i_0(\psi_{F_{r+1}}) = i_1(\psi_{r+1}) \geq 1 \end{equation*}
by Lemma \ref{placeisotropy}. In other words, $\psi_{r+1}$ becomes isotropic over $F_{r+1}(\phi)$, as we wanted. \end{proof} \end{proposition}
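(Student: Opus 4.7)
The approach mirrors Lemma~\ref{ffsubformtower} but replaces the subform-based arguments by the place-theoretic input of Proposition~\ref{towerplaces}. As in the proof of Lemma~\ref{heightext}, the tower $F(\phi) \subset F_1(\phi) \subset \dots \subset F_{h(\psi)}(\phi)$ is $F$-equivalent to an initial segment of the standard splitting tower of $\psi_{F(\phi)}$ (by Remarks~\ref{ffremarks}~(2) and~\ref{sspremarks}~(2)), so computing $\mathrm{dim}(\psi_{F_r(\phi)})_{an}$ at each level determines $\mathrm{sp}(\psi_{F(\phi)})$ via Lemma~\ref{placeisotropy}. If no $\psi_r$ becomes isotropic over $F_r(\phi)$ for $r \in [0, h(\psi))$, Lemma~\ref{heightext} directly gives case~(1). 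Otherwise take $s$ minimal with this property; trivially $\mathrm{dim}(\psi_{F_r(\phi)})_{an} = \mathrm{dim}\;\psi_r$ for $r < s$, and the remaining task is to prove $\mathrm{dim}(\psi_{F_r(\phi)})_{an} = \mathrm{dim}\;\psi_{r+1}$ for every $r \in [s, h(\psi) - 1]$, which pins down case~(2).

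For the boundary case $r = h(\psi) - 1$, since some $\psi_t$ isotropises over $F_t(\phi)$ we have $h(\psi_{F(\phi)}) < h(\psi)$ by Lemma~\ref{heightext}, hence $\psi$ is already completely split over $F_{h(\psi)-1}(\phi)$; this matches $\mathrm{dim}\;\psi_{h(\psi)} = 1$. For $r \in [s, h(\psi) - 2]$, Proposition~\ref{towerplaces}~(1) (applied with index $r+1$) combined with the usual function-field equivalences supplies an $F$-place $F_r(\phi) \rightharpoonup F_{r+1}$, whence Lemma~\ref{placeisotropy} yields the lower bound $\mathrm{dim}(\psi_{F_r(\phi)})_{an} \geq \mathrm{dim}\;\psi_{r+1}$. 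The matching upper bound would follow, for each such $r$ at which $\psi_r$ is isotropic over $F_r(\phi)$, from Corollary~\ref{23minimalityi1}: indeed $i_0((\psi_r)_{F_r(\phi)}) \geq i_0((\psi_r)_{F_r(\psi_r)}) = i_1(\psi_r)$, which rearranges to $\mathrm{dim}(\psi_{F_r(\phi)})_{an} \leq \mathrm{dim}\;\psi_{r+1}$.

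It therefore suffices to establish, by induction from the base $r = s$, that $\psi_r$ becomes isotropic over $F_r(\phi)$ for every $r \in [s, h(\psi) - 2]$. The heart of the proof --- and the main obstacle --- is the propagation step: if $\psi_r$ is isotropic over $F_r(\phi)$ for some $r \in [s, h(\psi) - 3]$, then $\psi_{r+1}$ is isotropic over $F_{r+1}(\phi)$. I would prove this as follows. Set $\eta_r = (\phi_{F_r})_{an}$; Proposition~\ref{heightfunctoriality} together with Lemmas~\ref{ffisotropy}~(1) and~\ref{isotropytower}~(3) yields $h(\eta_r) \geq h(\phi) - r \geq h(\psi) - r \geq 3$, so $\eta_r$ has dimension $>1$. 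Since $F_r(\phi) \sim_{F_r} F_r(\eta_r)$ (Remark~\ref{ffremarks}~(2)), Lemma~\ref{placeisotropy} transports the assumed isotropy to show $(\psi_r)_{F_r(\eta_r)}$ is isotropic; Proposition~\ref{23transitivity} then produces an $F_r$-place $F_r(\psi_r) \rightharpoonup F_r(\eta_r)$, and a second application of Lemma~\ref{placeisotropy} shows that $(\eta_r)_{F_r(\psi_r)}$ is isotropic. This verifies the hypothesis needed to apply Proposition~\ref{towerplaces}~(2) over the base field $F_r$ with the roles of $\phi$ and $\psi$ there played by $\psi_r$ and $\eta_r$ respectively; taking $t = 2$ (admissible because $h(\eta_r) \geq 3$) yields an $F_r$-place $F_r(\psi_r^{\times 2}) \rightharpoonup F_r(\eta_r \times \psi_r)$. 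The standard function-field identifications $F_r(\psi_r^{\times 2}) \sim_{F_{r+1}} F_{r+2}$ and $F_r(\eta_r \times \psi_r) \sim_{F_{r+1}} F_{r+1}(\phi)$ translate this into an $F$-place $F_{r+2} \rightharpoonup F_{r+1}(\phi)$. Applying Lemma~\ref{placeisotropy} to $\psi$ then delivers
\[
i_0(\psi_{F_{r+1}(\phi)}) \;\geq\; i_0(\psi_{F_{r+2}}) \;=\; i_0(\psi_{F_{r+1}}) + i_1(\psi_{r+1}),
\]
so $i_0((\psi_{r+1})_{F_{r+1}(\phi)}) \geq i_1(\psi_{r+1}) \geq 1$, completing the induction. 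The restriction $r \leq h(\psi) - 3$ is exactly what makes $t = 2$ admissible in Proposition~\ref{towerplaces}~(2); no separate propagation is needed at $r = h(\psi) - 2$ or $r = h(\psi) - 1$, since those levels are covered by the base induction together with the boundary argument of the second paragraph.
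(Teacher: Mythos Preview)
Your argument is correct and follows the paper's proof essentially line for line: the tower $(F_r(\phi))$ computes $\mathrm{sp}(\psi_{F(\phi)})$, the lower bound at each level comes from the places of Proposition~\ref{towerplaces}~(1), the upper bound from Corollary~\ref{23minimalityi1}, and the propagation step from Proposition~\ref{towerplaces}~(2) applied over $F_r$ with $(\phi,\psi)$ replaced by $(\psi_r,\eta_r)$. One small slip to clean up: the sentence deducing that $(\eta_r)_{F_r(\psi_r)}$ is isotropic from the place $F_r(\psi_r)\rightharpoonup F_r(\eta_r)$ via Lemma~\ref{placeisotropy} has the inequality going the wrong way, but this claim is superfluous anyway---under your stated role assignment the hypothesis of Proposition~\ref{towerplaces}~(2) is that $(\psi_r)_{F_r(\eta_r)}$ be isotropic, which you had already correctly established, so simply delete that sentence.
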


Now we can complete the proof of Theorem \ref{functorialityofssp}.

\begin{proof}[Proof of Theorem \ref{functorialityofssp}.] If $\psi_{F(\phi)}$ is also isotropic, then we have $F(\psi) \sim_F F(\phi)$ by Proposition \ref{23transitivity}. By Proposition \ref{equivalenceandssp}, we conclude that $\widetilde{\mathrm{sp}}(\psi) = \widetilde{\mathrm{sp}}(\phi)$. Conversely, if $\widetilde{\mathrm{sp}}(\psi) = \widetilde{\mathrm{sp}}(\phi)$, then in particular we have $\mathrm{dim}\;\psi_1 = \mathrm{dim}\;\phi_1$, and hence $\psi_{F(\phi)}$ is isotropic by the second part of Lemma \ref{firststepfunctoriality}. This proves (2).

To prove (1), we may assume that $\psi_{F(\phi)}$ is anisotropic. We argue by induction on $h(\phi)$. By Lemma \ref{firststepfunctoriality} (1), we have $\mathrm{dim}\;\psi_1 \leq \mathrm{dim}\;\phi_1$. In particular, if $h(\psi) \leq 2$, then we are done. We may therefore assume that $h(\psi) > 2$, and we need to show that $\widetilde{\mathrm{sp}}(\psi_1) \leq \widetilde{\mathrm{sp}}(\phi_1)$. Since $h(\psi) >2$, we have an $F$-place $F(\phi^{\times 2}) \rightarrow F(\psi \times \phi)$ by part (2) of Proposition \ref{towerplaces}. By Lemma \ref{placeisotropy} and Remark \ref{sspremarks} (2), we have
\begin{equation*} i_0((\phi_1)_{F(\psi \times \phi)}) = i_0(\phi_{F(\psi \times \phi)}) - i_1(\phi) \geq i_0(\phi_{F(\phi^{\times 2})}) - i_1(\phi) = i_1(\phi_1) = i_2(\phi) \geq 1. \end{equation*}
Hence $\phi_1$ becomes isotropic over $F(\psi \times \phi) = F(\phi)(\psi_{F(\phi)})$. By the induction hypothesis, we have
\begin{equation*} \widetilde{\mathrm{sp}}(\psi_{F(\phi)}) \leq \widetilde{\mathrm{sp}}(\phi_1). \end{equation*}
It will therefore be enough to show that $\widetilde{\mathrm{sp}}(\psi_1) \leq \widetilde{\mathrm{sp}}(\psi_{F(\phi)})$. This follows immediately from Proposition \ref{functorialityprop}, and the theorem is proved. \end{proof}

\section{Applications to quasilinear quadratic forms}

In this final section, we apply the results of the previous sections in the special case where $p=2$. It will be convenient to make the following remark.

\begin{remark} \label{p=2remark} Assume that $p=2$. Then in the statements of Theorem \ref{secondcomparison} and Corollaries \ref{i1comparison} and \ref{i1i2comparison}, the integer $[\frac{\mathrm{dim}\;\phi_1}{2}]$ can be replaced by $[\frac{\mathrm{dim}_{Izh}\phi}{2}]$. The reader will easily observe that it suffices to verify that the integer $[\frac{\mathrm{dim}\;\psi_1}{2}]$ appearing in the statement of Proposition \ref{secondcomparisonbasic} can be replaced by $[\frac{\mathrm{dim}_{Izh}\psi}{2}]$. To see this, let $s' = \mathrm{min}\lbrace i_0(\phi_{F(\sigma)}), [\frac{\mathrm{dim}_{Izh}\psi}{2}] \rbrace$. As in the proof of Proposition \ref{secondcomparisonbasic}, we find $\tau' \subset \phi$ such that $\mathrm{dim}\;\tau' \leq 2s$ and $i_0(\tau'_{F(\sigma)}) \geq s$. However, using Corollary \ref{isotropyquadratic}, we see that the form $\tau'$ is in this case divisible by the form $\pfister{a}$. One easily checks that this divisibility implies that $i_1(\tau') > 1$ (see \citep[Proposition 4.19]{Hoffmann2} for example). In particular, we have $\mathrm{dim}\;\tau'_1 \leq 2s' - 2 \leq \mathrm{dim}_{Izh}\psi - 2 <\mathrm{dim}\;\psi_1$. By Lemma \ref{firststepfunctoriality}, this implies that $\tau'$ remains anisotropic over $F(\psi)$. The reader will easily check that after replacing $s$ with $s'$ and $\tau$ with $\tau'$, one can argue exactly as before to obtain the improved result. \end{remark}

From the remainder of this paper, we assume that $p=2$. We begin by proving an analogue of Vishik's Theorem \ref{outerexcellentconnections} for quasilinear quadratic forms.

\begin{theorem} \label{excellentconnectionsquasilinear} Let $\phi$ be an anisotropic quasilinear quadratic form of dimension $>1$ over $F$, and write $\mathrm{dim}\;\phi = 2^n + m$ for uniquely determined integers $n \geq 0$ and $m \in [1,2^n]$. Let $L$ be a field extension of $F$. If $i_0(\phi_L) < m$, then $i_0(\phi_L) \leq m - i_1(\phi)$.
\begin{proof} By Theorem \ref{secondcomparison} and Remark \ref{p=2remark}, we have
\begin{equation} \label{eq6} i_0(\phi_{L(\phi)}) - i_1(\phi) \geq \mathrm{min}\lbrace i_0(\phi_L),[\frac{\mathrm{dim}_{Izh}\phi}{2}]\rbrace. \end{equation}
Let $\tau = (\phi_L)_{an}$, so that $\mathrm{dim}\;\tau = 2^n + (m-i_0(\phi_L))$. Since $i_0(\phi_L) < m$, Corollary \ref{Hoffmannbound} implies that 
\begin{equation} \label{eq7} i_1(\tau) \leq m - i_0(\phi_L). \end{equation} 
Now, using Remark \ref{ffremarks} (2), we see that $i_0(\phi_{L(\phi)}) = i_0(\phi_L) + i_1(\tau)$. Equation \eqref{eq7} therefore implies that $i_0(\phi_{L(\phi)}) \leq m$, and \eqref{eq6} then gives
\begin{equation*} m - i_1(\phi) \geq \mathrm{min}\lbrace i_0(\phi_L),[\frac{\mathrm{dim}_{Izh}\phi}{2}]\rbrace. \end{equation*}
To complete the proof, it is now enough to show that $[\frac{\mathrm{dim}_{Izh}\phi}{2}] > m-i_1(\phi)$. This is an easy calculation which follows directly from the definitions, and we leave it to the reader. \end{proof} \end{theorem}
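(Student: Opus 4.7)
The plan is to play off two upper/lower bounds on the integer $i_0(\phi_{L(\phi)})$ against each other. On the one hand, Remark \ref{ffremarks} (2) gives $i_0(\phi_{L(\phi)}) = i_0(\phi_L) + i_1(\tau)$, where $\tau = (\phi_L)_{an}$. On the other hand, Theorem \ref{secondcomparison}, sharpened using Remark \ref{p=2remark}, yields the lower bound
\begin{equation*}
i_0(\phi_{L(\phi)}) - i_1(\phi) \;\geq\; \min\bigl\{\,i_0(\phi_L),\; [\tfrac{\mathrm{dim}_{Izh}\phi}{2}]\,\bigr\}.
\end{equation*}

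First, I would bound $i_0(\phi_{L(\phi)})$ from above. Since $i_0(\phi_L) < m$, the form $\tau$ has dimension $2^n + (m - i_0(\phi_L))$ with $m - i_0(\phi_L) \in [1,2^n]$, so Corollary \ref{Hoffmannbound} (Hoffmann's bound) applies to give $i_1(\tau) \leq m - i_0(\phi_L)$. Combined with the formula above, this yields $i_0(\phi_{L(\phi)}) \leq m$. Feeding this into the second-comparison inequality, I obtain
\begin{equation*}
m - i_1(\phi) \;\geq\; \min\bigl\{\,i_0(\phi_L),\; [\tfrac{\mathrm{dim}_{Izh}\phi}{2}]\,\bigr\}.
\end{equation*}
If the minimum on the right is realised by $i_0(\phi_L)$, this is exactly the desired inequality, so the whole proof reduces to showing that the second term is strictly larger than $m - i_1(\phi)$, which forces the minimum to be realised by the first.

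Thus the remaining task is the elementary verification that $[\tfrac{\mathrm{dim}_{Izh}\phi}{2}] > m - i_1(\phi)$. Since $\mathrm{dim}_{Izh}\phi = \mathrm{dim}\;\phi_1 + 1 = 2^n + m - i_1(\phi) + 1$, this amounts to checking that $2^n + i_1(\phi) \geq m$ (after handling parity), which is immediate from $m \leq 2^n$ and $i_1(\phi) \geq 1$. I expect this to be the only step that requires any work, and even it is a short arithmetic check; the genuine content of the argument is concentrated in Theorem \ref{secondcomparison} (together with the $p=2$ refinement from Remark \ref{p=2remark}) and in Hoffmann's bound, so there is no serious obstacle to overcome.
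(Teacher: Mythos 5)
Your proposal reproduces the paper's argument step for step: the same lower bound from Theorem \ref{secondcomparison} with the $p=2$ refinement of Remark \ref{p=2remark}, the same upper bound $i_0(\phi_{L(\phi)}) \leq m$ via Corollary \ref{Hoffmannbound} applied to $\tau = (\phi_L)_{an}$ and the identity $i_0(\phi_{L(\phi)}) = i_0(\phi_L) + i_1(\tau)$, and the same concluding reduction to the arithmetic inequality $[\mathrm{dim}_{Izh}\phi/2] > m - i_1(\phi)$. The only point the paper leaves implicit — the final arithmetic check — you carry out correctly (note that what is actually needed is the strict inequality $m - i_1(\phi) < 2^n$, which follows at once from $m \leq 2^n$ and $i_1(\phi) \geq 1$), so the proof is the same as the paper's.
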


\begin{remark} Let $E$ be a field of characteristic $p>0$, and let $\phi$ be an anisotropic quasilinear $p$-form over $E$ with $\mathrm{dim}\;\phi = p^n + m$ for integers $n \geq 0$ and $m \in [1,p^{n+1} - p^n]$. Then the inequality $[\frac{\mathrm{dim}_{Izh}\phi}{p}] > m-i_1(\phi)$ does not hold in general if $p>2$. \end{remark}

As a corollary of Theorem \ref{excellentconnectionsquasilinear}, we get the following result, which significantly improves Hoffmann and Laghribi's Corollary \ref{Hoffmannbound}. 

\begin{theorem} \label{i1bounds} Let $\phi$ be an anisotropic quasilinear quadratic form of dimension $>1$ over $F$, and write $\mathrm{dim}\;\phi = 2^n + m$ for uniquely determined integers $n \geq 0$ and $m \in [1,2^n]$. Then either $i_1(\phi) = m$ or $i_1(\phi) \leq \frac{m}{2}$.
\begin{proof} Suppose that $i_1(\phi) \neq m$. By Corollary \ref{Hoffmannbound}, we therefore have $i_1(\phi) < m$. Applying Theorem \ref{excellentconnectionsquasilinear} in the case $L = F(\phi)$, we see that $i_1(\phi) \leq m - i_1(\phi)$, and the result follows. \end{proof} \end{theorem}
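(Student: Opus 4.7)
The plan is to deduce this as a direct consequence of Theorem \ref{excellentconnectionsquasilinear} (just established), by specialising the arbitrary field extension $L$ to the universal choice $L = F(\phi)$, the function field of $\phi$ itself. The point is that Theorem \ref{excellentconnectionsquasilinear} is a statement about the defect indices $i_0(\phi_L)$ over arbitrary extensions, while the invariant $i_1(\phi)$ is, by the very definition of the standard splitting tower, the defect index realised over this one distinguished extension.

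Concretely, I would proceed as follows. First, dispose of the trivial case $i_1(\phi) = m$, so assume $i_1(\phi) \neq m$. By Hoffmann's bound (Corollary \ref{Hoffmannbound}), we know that $i_1(\phi) \leq m$ in general, and combined with the assumption $i_1(\phi) \neq m$ this forces the strict inequality $i_1(\phi) < m$. Setting $L = F(\phi)$, we have $i_0(\phi_L) = i_1(\phi) < m$ by definition of $i_1$. Theorem \ref{excellentconnectionsquasilinear} therefore applies and yields
\begin{equation*}
i_1(\phi) = i_0(\phi_L) \leq m - i_1(\phi),
\end{equation*}
which rearranges to $i_1(\phi) \leq m/2$ as required.

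There is no genuine obstacle here beyond the work already done: the entire content of the theorem is packaged into the quasilinear analogue of Vishik's ``outer excellent connections'' result (Theorem \ref{excellentconnectionsquasilinear}), which is the deep input. The only conceptual step is to recognise that the generic extension $F(\phi)$ is the appropriate place to evaluate that theorem, since it converts the statement about arbitrary field extensions into a statement about the intrinsic invariant $i_1(\phi)$. Thus the proof is a one-line corollary, with Corollary \ref{Hoffmannbound} serving only to verify the hypothesis $i_0(\phi_L) < m$ of Theorem \ref{excellentconnectionsquasilinear}.
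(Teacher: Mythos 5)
Your proof is correct and is essentially the same as the paper's: both specialise Theorem \ref{excellentconnectionsquasilinear} to $L = F(\phi)$, using Corollary \ref{Hoffmannbound} to secure the hypothesis $i_0(\phi_{F(\phi)}) = i_1(\phi) < m$, and then rearrange $i_1(\phi) \leq m - i_1(\phi)$.
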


Recall that given an anisotropic quasilinear quadratic form $\phi$ of dimension $>1$ over $F$, $h_{qp}(\phi)$ denotes the smallest nonnegative integer $r$ for which $\phi_r$ is a quasi-Pfister neighbour. By Theorem \ref{pfisterneighbourclassification}, the form $\psi_{h_{qp}(\phi)+1}$ is similar to a quasi-Pfister form, and so the standard splitting of $\phi$ is completely understood from this point on (cf. Theorem \ref{ssppfister}). We can now say something about the structure of the ``nontrivial'' part of the standard splitting pattern.

\begin{theorem} \label{hilltheorem} Let $\phi$ be an anisotropic quasilinear quadratic form of dimension $>1$ over $F$. Assume that $\phi$ is not a quasi-Pfister neighbour (that is, $h_{qp}(\phi) >0$). Then $i_1(\phi) \leq i_2(\phi) \leq ... \leq ... i_{h_{qp}}(\phi)$.
\begin{proof} It is sufficient to prove that $i_1(\phi) \leq i_2(\phi)$. By Corollary \ref{i1i2comparison} and Remark \ref{p=2remark}, we have
\begin{equation*} i_2(\phi) \geq \mathrm{min}\lbrace i_1(\phi),[\frac{\mathrm{dim}_{Izh}\phi}{2}] \rbrace \end{equation*}
It will be enough to show that the right hand side is equal to $i_1(\phi)$. Suppose not. Then
\begin{equation*} i_1(\phi_1) = i_2(\phi) \geq [\frac{\mathrm{dim}_{Izh}\phi}{2}] = [\frac{\mathrm{dim}\;\phi_1 + 1}{2}] \end{equation*}
It follows from Lemma \ref{isotropytower} (2), that we must in fact have $i_1(\phi_1) = \frac{\mathrm{dim}\;\phi_1}{2}$. By Theorem \ref{ssppfister}, $\phi_1$ is therefore similar to a quasi-Pfister form. But by Theorem \ref{pfisterneighbourclassification}, this implies that $\phi$ is a quasi-Pfister neighbour, contradicting our assumption. The result follows. \end{proof} \end{theorem}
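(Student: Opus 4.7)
My plan is to reduce the monotonicity claim $i_1(\phi)\leq i_2(\phi)\leq\dots\leq i_{h_{qp}(\phi)}(\phi)$ to a single base case. For any $r \in [1, h_{qp}(\phi))$, one has $i_r(\phi) = i_1(\phi_{r-1})$ and $i_{r+1}(\phi) = i_2(\phi_{r-1})$, and the form $\phi_{r-1}$ is still not a quasi-Pfister neighbour (since $h_{qp}(\phi_{r-1}) = h_{qp}(\phi) - (r-1) > 0$). Thus the only thing requiring proof is the inequality $i_1(\phi) \leq i_2(\phi)$ under the standing assumption that $\phi$ is anisotropic of dimension $>1$ and not a quasi-Pfister neighbour.

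For the base case, I would apply Corollary \ref{i1i2comparison}, sharpened as in Remark \ref{p=2remark} using the $p=2$ hypothesis, to obtain
\begin{equation*}
 i_2(\phi) \;\geq\; \min\!\left\{\, i_1(\phi),\ \Bigl[\tfrac{\mathrm{dim}_{Izh}\phi}{2}\Bigr]\,\right\}.
\end{equation*}
If the minimum on the right equals $i_1(\phi)$, we are done immediately. So the content of the argument is to rule out the alternative that the minimum is $\bigl[\tfrac{\mathrm{dim}_{Izh}\phi}{2}\bigr]$ and strictly smaller than $i_1(\phi)$.

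Assume for contradiction that $i_2(\phi) \geq \bigl[\tfrac{\mathrm{dim}_{Izh}\phi}{2}\bigr] = \bigl[\tfrac{\mathrm{dim}\,\phi_1 + 1}{2}\bigr]$. Rewriting this as $i_1(\phi_1) \geq \bigl[\tfrac{\mathrm{dim}\,\phi_1 + 1}{2}\bigr]$ and combining with the general lower bound $\mathrm{dim}\,\phi_2 \geq \tfrac{1}{2}\mathrm{dim}\,\phi_1$ from Lemma \ref{isotropytower} (2), which translates to $i_1(\phi_1) \leq \tfrac{1}{2}\mathrm{dim}\,\phi_1$, a parity check forces $i_1(\phi_1) = \tfrac{1}{2}\mathrm{dim}\,\phi_1$ exactly. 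By the quadratic Pfister recognition Theorem \ref{ssppfister}, this equality (condition (3) there) means $\phi_1$ is similar to a quasi-Pfister form.

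Finally, I invoke the characterisation of quasi-Pfister neighbours: by the equivalence of conditions (1) and (6) in Theorem \ref{pfisterneighbourclassification}, the fact that $\phi_1$ is similar to a quasi-Pfister form implies that $\phi$ itself is a quasi-Pfister neighbour, contradicting $h_{qp}(\phi) > 0$. The main obstacle here is conceptually small but logically crucial: verifying that the alternative case in the $\min$ forces $\phi_1$ into exactly the extremal shape of Theorem \ref{ssppfister}, and then propagating this via Theorem \ref{pfisterneighbourclassification} back to $\phi$. All the heavy lifting has been done in the earlier comparison theorem (Theorem \ref{secondcomparison}) and the $p=2$ sharpening in Remark \ref{p=2remark}; the present proof only has to organize these inputs and extract the contradiction.
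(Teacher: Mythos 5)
Your proposal is correct and follows essentially the same line of reasoning as the paper's own proof: the reduction to the case $r=1$, the application of Corollary \ref{i1i2comparison} sharpened by Remark \ref{p=2remark}, the elimination of the alternative branch of the minimum via Lemma \ref{isotropytower}~(2), and the final contradiction through Theorems \ref{ssppfister} and \ref{pfisterneighbourclassification}. You merely spell out a few steps in more detail (the $h_{qp}(\phi_{r-1}) = h_{qp}(\phi) - (r-1)$ bookkeeping in the reduction, and the parity argument forcing $i_1(\phi_1) = \frac{1}{2}\dim\phi_1$), but the argument is the same one.
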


As a final application of our results, we give a positive answer to Question \ref{maxsplittingquestions} (1).

\begin{theorem} \label{quadraticmaxsplitting} Let $\phi$ be an anisotropic quasilinear quadratic form over $F$ such that $2^n + 2^{n-2} < \mathrm{dim}\;\phi \leq 2^{n+1}$ for some $n \geq 2$. If $\phi$ has maximal splitting, then $\phi$ is a quasi-Pfister neighbour. In other words, the analogue of Conjecture \ref{maxsplittingconjecture} for quasilinear quadratic forms is true.
\begin{proof} Suppose that $\phi$ is not a quasi-Pfister neighbour. Then $i_2(\phi) \geq i_1(\phi)$ by Theorem \ref{hilltheorem}. In particular, we have
\begin{align*} i_1(\phi_1) = i_2(\phi) & \geq i_1(\phi)\\
&> 2^{n-2}\\
&= \frac{2^{n-1}}{2}\\
&= \frac{2^n - 2^{n-1}}{2} \\
&= \frac{\mathrm{dim}\;\phi_1 - 2^{n-1}}{2}. \end{align*}
By Theorem \ref{i1bounds}, we see that we must have $i_1(\phi_1) = 2^{n-1} = \frac{\mathrm{dim}\;\phi_1}{2}$, and hence $\phi_1$ is similar to a quasi-Pfister form by Theorem \ref{ssppfister}. But by Theorem \ref{pfisterneighbourclassification}, this implies that $\phi$ is a quasi-Pfister neighbour, contradicting our assumption. \end{proof} \end{theorem}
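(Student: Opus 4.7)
The plan is to argue by contradiction via a dichotomy on $i_1(\phi_1)$.

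First I would assume that $\phi$ is not a quasi-Pfister neighbour. Since $\dim \phi > 2^n$, in particular $\phi$ is not similar to a quasi-Pfister form, so $h_{qp}(\phi) \geq 1$ and Theorem \ref{hilltheorem} is applicable; it supplies the monotonicity estimate $i_2(\phi) \geq i_1(\phi)$.

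Next, I would turn the maximal splitting hypothesis into sharp dimensional information on $\phi_1$. Write $\dim \phi = 2^n + m$ with $m \in (2^{n-2}, 2^n]$ so that the maximal splitting hypothesis reads $i_1(\phi) = m$; then $\dim \phi_1 = \dim \phi - i_1(\phi) = 2^n$. Applying Theorem \ref{i1bounds} to $\phi_1$, whose dimension decomposes as $2^{n-1} + 2^{n-1}$, we obtain the dichotomy that either $i_1(\phi_1) = 2^{n-1}$ or $i_1(\phi_1) \leq 2^{n-2}$. But the monotonicity from the previous step gives $i_1(\phi_1) = i_2(\phi) \geq i_1(\phi) = m > 2^{n-2}$, ruling out the second alternative. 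Hence $i_1(\phi_1) = 2^{n-1} = \tfrac{1}{2}\dim \phi_1$. By Theorem \ref{ssppfister}, $\phi_1$ is then similar to a quasi-Pfister form, and Theorem \ref{pfisterneighbourclassification} (equivalence of (1) and (6)) yields the contradiction that $\phi$ itself is a quasi-Pfister neighbour.

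The whole argument hinges on the initial application of Theorem \ref{hilltheorem}: it is precisely the inequality $i_1(\phi) \leq i_2(\phi)$ that converts the dimensional hypothesis $\dim \phi > 2^n + 2^{n-2}$ into the strict lower bound $i_1(\phi_1) > 2^{n-2}$ needed to select the favourable branch of Theorem \ref{i1bounds}. Once the monotonicity is in place, the remainder of the proof is a short dimension count. This also explains why the argument is confined to $p = 2$, since both Theorem \ref{hilltheorem} and the sharpened form of Theorem \ref{i1bounds} rely on the refinement of Theorem \ref{secondcomparison} noted in Remark \ref{p=2remark}, which depends on the quasilinear quadratic divisibility criterion from Corollary \ref{isotropyquadratic}.
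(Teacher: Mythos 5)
Your proposal is essentially the paper's own proof, just with the intermediate computation $\dim\phi_1 = \dim\phi - i_1(\phi) = 2^n$ and the dichotomy supplied by Theorem \ref{i1bounds} spelled out explicitly. Both arguments hinge on Theorem \ref{hilltheorem} to obtain $i_1(\phi_1) > 2^{n-2}$, use Theorem \ref{i1bounds} on $\phi_1$ to force $i_1(\phi_1) = 2^{n-1}$, and then invoke Theorems \ref{ssppfister} and \ref{pfisterneighbourclassification} for the contradiction.
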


\begin{remarks} \label{concludingremarks} We conclude with some general remarks on the results of this section.
\begin{enumerate} \item[$\mathrm{(1)}$] Theorem \ref{i1bounds} represents an important step towards an analogue of Karpenko's Theorem \ref{Hoffmann'sconjecture} for quasilinear quadratic forms. In order to establish such an analogue in full generality, Corollary \ref{shiftinglemma} and Theorem \ref{pfisterneighbourclassification} show that it is sufficient to consider forms $\phi$ satisfying $h_{qp}(\phi) = 1$. The invariant $h_{qp}$ may be seen to provide a certain measure of ``complexity'', with quasi-Pfister neighbours being the forms of ``lowest complexity'' in this system. It is not clear how useful this invariant is in classifying forms of ``higher complexity''. In view of the above remarks, it would already be very interesting to know if one can say something about those forms which belong to the second level of this classification (that is, those forms $\phi$ with $h_{qp}(\phi) = 1$).
\item[$\mathrm{(2)}$] Theorem \ref{i1bounds} is itself a consequence of Theorem \ref{excellentconnectionsquasilinear}, and the latter result is a direct analogue of Theorem \ref{outerexcellentconnections} for quasilinear quadratic forms. As discussed in \S 1, Theorem \ref{outerexcellentconnections} is a formal corollary of A. Vishik's theorem on ``excellent connections'' in the motivic theory of quadrics (\citep[Theorem 1.3]{Vishik2}). The proof, however, only relies on a special case of this deep result, namely the ``outer excellent connections''. As explained in \cite{Vishik2}, the ``inner excellent connections'' established by Vishik also provide nontrivial relations between the higher Witt indices, and these relations put significant restrictions on the possible values of the invariant $i_1$. In fact, they can be used to give another proof of Karpenko's Theorem \ref{Hoffmann'sconjecture} (cf. \citep[Theorem 2.5]{Vishik2}). In view of Theorem \ref{excellentconnectionsquasilinear}, it is an interesting question to ask if analogues of the other relations established in \cite{Vishik2} exist for quasilinear quadratic forms. It seems that the methods used in the present article are insufficient to address this problem beyond Theorem \ref{excellentconnectionsquasilinear}. \end{enumerate} \end{remarks}

\appendix

\section{Places} 

We briefly recall some standard facts about places. All details which are not provided can be found in \S 103 of \cite{EKM}. Throughout this section we fix an arbitrary base field $k$.

\begin{definition} Let $K$ and $L$ be field extensions of $k$. A \emph{k-place} $K \rightharpoonup L$ consists of the following data.
\begin{itemize} \item A valuation subring $R$ of $K$ containing $k$.
\item A local $k$-algebra homomorphism $R \rightarrow L$. \end{itemize} \end{definition}

If $K \rightharpoonup L$ and $L \rightharpoonup M$ are $k$-places defined by local $k$-algebra homomorphisms $f \colon R \rightarrow L$ and $g \colon S \rightarrow M$ respectively, then $T = f^{-1}(S)$ is a valuation subring of $K$ containing $k$ and the restriction $g \circ f|_T \colon T \rightarrow M$ is a local $k$-algebra homomorphism. In this way, the composition of $k$-places is defined.\\

Our interest in places is primarily motivated by the following lemma, which follows directly from the valuative criterion of properness.

\begin{lemma} \label{placescomplete} Let $K$ and $L$ be field extensions of $k$, and let $X$ be a complete variety over $k$. Assume that there exists a $k$-place $K \rightharpoonup L$. If $X(K) \neq \emptyset$, then $X(L) \neq \emptyset$. \end{lemma}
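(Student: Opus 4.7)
The plan is to deduce the statement directly from the valuative criterion of properness applied to the valuation ring underlying the place. Let me fix notation: suppose the $k$-place $K \rightharpoonup L$ is given by a valuation subring $R \subset K$ containing $k$ together with a local $k$-algebra homomorphism $\pi \colon R \to L$. By convention, $R$ has fraction field $K$, so that the inclusion $R \hookrightarrow K$ identifies $\operatorname{Spec}(K)$ with the generic point of $\operatorname{Spec}(R)$. A $K$-point $\xi \in X(K)$ amounts to a morphism $\operatorname{Spec}(K) \to X$ of $k$-schemes, which we wish to use, together with $\pi$, to produce a $k$-morphism $\operatorname{Spec}(L) \to X$.

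First I would assemble the commutative square
\[
\begin{CD}
\operatorname{Spec}(K) @>{\xi}>> X \\
@VVV @VVV \\
\operatorname{Spec}(R) @>>> \operatorname{Spec}(k),
\end{CD}
\]
where the left vertical arrow is induced by $R \hookrightarrow K$ and the right vertical arrow is the structural morphism, which is proper since $X$ is complete over $k$. Since $R$ is a valuation ring with fraction field $K$, the valuative criterion of properness (which uses only the existence half of the criterion, without uniqueness) provides a morphism $\operatorname{Spec}(R) \to X$ of $k$-schemes filling in the diagonal and making the diagram commute.

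Next, I would turn the place datum into an $L$-point of $\operatorname{Spec}(R)$. Since $L$ is a field and $\pi \colon R \to L$ is a local homomorphism, the kernel of $\pi$ contains the maximal ideal $\mathfrak{m}_R$ of $R$; as $\pi$ is also a $k$-algebra homomorphism and $k \subset R$ maps to $L$ via the field extension $k \subset L$, the map $\pi$ factors as $R \twoheadrightarrow R/\mathfrak{m}_R \hookrightarrow L$. This induces a $k$-morphism $\operatorname{Spec}(L) \to \operatorname{Spec}(R)$. Composing with the extension $\operatorname{Spec}(R) \to X$ produced above yields the desired $k$-morphism $\operatorname{Spec}(L) \to X$, i.e., an element of $X(L)$, showing that $X(L) \neq \emptyset$.

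There is essentially no obstacle here: the only point requiring any care is to verify that the valuative criterion applies in the form stated, and that the local $k$-algebra hypothesis on $\pi$ really does yield a $k$-morphism $\operatorname{Spec}(L) \to \operatorname{Spec}(R)$ (rather than merely a ring homomorphism). Both verifications are immediate from the definitions recalled at the start of the appendix, and the uniqueness clause of the valuative criterion is not needed.
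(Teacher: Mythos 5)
Your proof is correct and is exactly the argument the paper has in mind: the paper simply remarks that the lemma ``follows directly from the valuative criterion of properness,'' and your write-up supplies the expected details (the commutative square over $\operatorname{Spec}(k)$, the filler $\operatorname{Spec}(R)\to X$, and the factorization of the local $k$-algebra map $\pi$ through $R/\mathfrak{m}_R\hookrightarrow L$). There is nothing to add.
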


\begin{example} \label{placecompletefunctionfield} Let $L$ be a field extension of $k$, and let $X$ be a complete variety over $k$. If there exists a $k$-place $k(X) \rightharpoonup L$, then $X(L) \neq \emptyset$. \end{example}

Let $k \subset K \subset L$ be a tower of fields over $k$. Then $K$ is a valuation subring of itself, and the inclusion $K \subset L$ defines a $k$-place $K \rightharpoonup L$, called the \emph{trivial place}. More subtle examples of places are given by the following result.

\begin{lemma} \label{placeregpoint} Let $X$ be a variety over $k$, and let $x \in X$ be a regular point. Then there exists a $k$-place $k(X) \rightharpoonup k(x)$. \end{lemma}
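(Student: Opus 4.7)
The plan is to induct on $d = \dim \mathcal{O}_{X,x}$. Set $A = \mathcal{O}_{X,x}$, a regular local $k$-algebra with fraction field $k(X)$ and residue field $k(x)$. If $d = 0$, then $A$ is a field equal to both $k(X)$ and $k(x)$, so the identity map realises the desired (trivial) $k$-place. For the inductive step, assume $d \geq 1$ and that the result is known for regular points on varieties of strictly smaller dimension.

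First I would pick a regular system of parameters $t_1, \ldots, t_d$ of the maximal ideal of $A$. Since $A$ is a regular local ring, it is a UFD, so the principal ideal $(t_1)$ is a height-one prime; its localisation $B = A_{(t_1)}$ is therefore a discrete valuation ring with fraction field $k(X)$ and residue field $\mathrm{Frac}(A/(t_1))$. The quotient map $B \twoheadrightarrow \mathrm{Frac}(A/(t_1))$ is a local $k$-algebra homomorphism, and so defines a $k$-place $k(X) \rightharpoonup \mathrm{Frac}(A/(t_1))$.

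Next, because $t_1$ extends to a regular system of parameters, the quotient $A/(t_1)$ is itself a regular local ring of dimension $d-1$ with residue field $k(x)$ (and in particular a domain). After shrinking $X$ to a sufficiently small affine open neighbourhood of $x$ (which changes neither $k(X)$ nor $k(x)$), the element $t_1$ defines a global section whose vanishing locus has a unique irreducible component passing through $x$; let $Y$ denote this component, equipped with its reduced scheme structure. Then $Y$ is a variety over $k$ with $\mathcal{O}_{Y,x} \simeq A/(t_1)$, whence $x \in Y$ is a regular point with the same residue field $k(x)$ and with function field $k(Y) = \mathrm{Frac}(A/(t_1))$. Applying the induction hypothesis to $(Y,x)$ yields a $k$-place $k(Y) \rightharpoonup k(x)$, and composing this with the place built above gives the required $k$-place $k(X) \rightharpoonup k(x)$.

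The principal obstacle is the geometric step of passing from the quotient ring $A/(t_1)$ back to a bona fide subvariety; one must justify local irreducibility of the vanishing locus of $t_1$ at $x$ and the shrinking argument used to extract $Y$. Irreducibility at $x$ comes for free from the fact that $A/(t_1)$ is a domain, but presenting this cleanly is the main nuisance. A neater route, avoiding varieties altogether in the induction, would be to prove the slightly stronger statement that for any regular local $k$-algebra $(A,\mathfrak{m},\kappa)$ essentially of finite type over $k$ there exists a $k$-place $\mathrm{Frac}(A) \rightharpoonup \kappa$; the induction then closes purely at the level of commutative algebra via the DVR $A_{(t_1)}$ and the regular local ring $A/(t_1)$, with no further geometric input required.
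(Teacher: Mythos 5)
The paper itself gives no proof of this lemma: it is stated as a standard fact in the appendix, with a blanket pointer to \S 103 of [EKM] for any unproved assertions about places. There is therefore no ``paper's proof'' to compare against; what matters is whether your argument is correct and whether it is the standard one.

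Your argument is correct and is indeed the standard one (and essentially what one finds in [EKM]): a regular local ring of dimension $d$ yields, via a regular system of parameters $t_1,\dots,t_d$, a chain of discrete valuation rings $A_{(t_1)}$, $(A/(t_1))_{(\bar t_2)}$, and so on, whose composite is the desired place. The key facts you invoke are all right: $(t_1)$ is a height-one prime because $A/(t_1)$ is a regular local domain of dimension $d-1$; $A_{(t_1)}$ is a regular Noetherian local domain of dimension $1$, hence a DVR with fraction field $k(X)$ and residue field $\mathrm{Frac}(A/(t_1))$; and places compose.

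The only point worth stressing is the one you already flag yourself: the excursion back to an honest subvariety $Y$ with $\mathcal{O}_{Y,x}\simeq A/(t_1)$ is sound but unnecessarily delicate (one must shrink to make $t_1$ a global section, arrange that $R/(t_1)$ be reduced on a neighbourhood, and isolate the unique component through $x$). Your proposed ``neater route'' --- induct on dimension for an arbitrary Noetherian regular local $k$-algebra $(A,\mathfrak m,\kappa)$, proving directly that there is a $k$-place $\mathrm{Frac}(A)\rightharpoonup\kappa$ --- is strictly better here: the inductive step passes from $A$ to $A/(t_1)$ entirely within commutative algebra, with no geometry to re-justify, and the ``essentially of finite type'' hypothesis is not even needed (Noetherian suffices, since Krull dimension is then finite). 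I would adopt that formulation and derive the lemma as the special case $A=\mathcal{O}_{X,x}$.
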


\begin{example} \label{placepurelytran} Let $k \subset L$ be an extension of fields, and let $K$ be a purely transcendental extension of $L$ (of finite transcendence degree). Then there exists a $k$-place $K \rightharpoonup L$. This follows by applying Lemma \ref{placeregpoint} to a regular model of $K$ over $L$ possessing a rational point (for example, affine $L$-space of the appropriate dimension). \end{example}

We conclude this section by introducing the following definition.

\begin{definition} \label{placeequivalence} Let $K$ and $L$ be field extensions of $k$. We say that $K$ and $L$ are \emph{$k$-equivalent}, and write $K \sim_k L$, if there exist $k$-places $K \rightharpoonup L$ and $L \rightharpoonup K$. \end{definition}

\begin{example} \label{stabbireq} Let $X$ and $Y$ be stably birational varieties over $k$. Then $k(X) \sim_k k(Y)$. Indeed, there exists a field $K$ which is a purely transcendental extension (of finite transcendence degree) of both $k(X)$ and $k(Y)$. By Example \ref{placepurelytran}, there exist $k$-places $E \rightharpoonup k(Y)$ and $E \rightharpoonup k(X)$. Composing these with the trivial places $k(X) \rightharpoonup E$ and $k(Y) \rightharpoonup E$ respectively, we get $k$-places $k(X) \rightharpoonup k(Y)$ and $k(Y) \rightharpoonup k(X)$. \end{example}

\bibliographystyle{alphaurl}
\bibliography{SQPF}

\end{document}